\documentclass[oneside, a4paper, 10pt]{article}
\usepackage[colorlinks = true,
            linkcolor = black,
            urlcolor  = blue,
            citecolor = black]{hyperref}
\usepackage{amsmath, amsthm, amssymb, amsfonts, amscd}
\usepackage[left=3cm,right=3cm,top=3cm,bottom=3cm,a4paper]{geometry}
\usepackage{mathtools}
\usepackage{thmtools}
\usepackage{graphicx}
\usepackage{cleveref}
\usepackage{mathrsfs}
\usepackage{titling}
\usepackage{url}
\usepackage[nosort, noadjust]{cite}
\usepackage{enumitem}
\usepackage{tikz-cd}
\usepackage{multirow}
\usepackage{fancyhdr}
\usepackage{sectsty}
\usepackage{longtable}
\usepackage{authblk}
\usepackage[title]{appendix}
\DeclareGraphicsExtensions{.pdf,.png,.jpg}

\newcommand{\im}{{\operatorname{im}}}
\newcommand{\cok}{{\operatorname{cok}}}

\newcommand{\Gal}{\operatorname{Gal}}
\newcommand{\Hom}{\operatorname{Hom}}

\newcommand{\Aut}{\operatorname{Aut}}
\newcommand{\GL}{\operatorname{GL}}

\newcommand{\rank}{\operatorname{rank}}

\newcommand{\diag}{\operatorname{diag}}
\newcommand{\Sur}{\operatorname{Sur}}

\newcommand{\M}{\operatorname{M}}

\newcommand{\Z}{\mathbb{Z}}
\newcommand{\Q}{\mathbb{Q}}
\newcommand{\R}{\mathbb{R}}

\newcommand{\F}{\mathbb{F}}

\newcommand{\PP}{\mathbb{P}}
\newcommand{\EE}{\mathbb{E}}
\newcommand{\OO}{\mathcal{O}}
\newcommand{\CC}{\mathcal{C}}
\newcommand{\DD}{\mathcal{D}}
\newcommand{\GG}{\mathcal{G}}
\newcommand{\LL}{\mathcal{L}}
\newcommand{\calA}{\mathcal{A}}

\newcommand{\bfe}{\mathbf{e}}
\newcommand{\bfk}{\mathbf{k}}

\newcommand{\mfm}{\mathfrak{M}}
\newcommand{\WG}{\widetilde{G}}

\theoremstyle{definition}
\newtheorem{theorem}{Theorem}[section]
\newtheorem{proposition}[theorem]{Proposition}

\newtheorem{lemma}[theorem]{Lemma}

\newtheorem*{conjecture*}{Conjecture}
\newtheorem{remark}[theorem]{Remark}
\newtheorem{definition}[theorem]{Definition}


\linespread{1.25}

\setlength{\droptitle}{-14mm}

\setlength{\skip\footins}{1cm}

\makeatletter 
\newcommand\semilarge{\@setfontsize\semilarge{11}{13.2}}
\makeatother

\title{\semilarge{\textbf{MIXED MOMENTS AND THE JOINT DISTRIBUTION OF RANDOM GROUPS}}}
\author{\normalsize{JUNGIN LEE} }
\date{}

\newcommand\shorttitle{JOINT DISTRIBUTION OF RANDOM GROUPS}
\newcommand\authors{JUNGIN LEE}

\fancyhf{}

\fancyhead[C]{%
\ifodd\value{page}
  \small\scshape\authors
\else
  \small\scshape\shorttitle
\fi
}
\pagestyle{fancy}
\cfoot{\thepage}

\sectionfont{\centering \large}
\subsectionfont{\normalsize}
\makeatletter
\renewcommand{\@seccntformat}[1]{\csname the#1\endcsname.\quad}
\makeatother

\renewenvironment{abstract}
 {\quotation\small\noindent\rule{\linewidth}{.5pt}\par\smallskip
  {\centering\bfseries\abstractname\par}\medskip}
 {\par\noindent\rule{\linewidth}{.5pt}\endquotation}
 
\AtEndDocument{%
  \bigskip
  \footnotesize

  \textsc{Jungin Lee, Department of Mathematics, Ajou University, Suwon 16499, Republic of Korea}\par\nopagebreak
  \textit{E-mail address:} \texttt{jileemath@ajou.ac.kr} }
\begin{document}
\maketitle
\vspace{-18mm}

\begin{abstract}
We study the joint distribution of random abelian and non-abelian groups. In the abelian case, we prove several universality results for the joint distribution of the multiple cokernels for random $p$-adic matrices. In the non-abelian case, we compute the joint distribution of random groups given by the quotients of the free profinite group by random relations. In both cases, we generalize the known results on the distribution of the cokernels of random $p$-adic matrices and random groups. Our proofs are based on the observation that mixed moments determine the joint distribution of random groups, which extends the works of Wood for abelian groups and Sawin for non-abelian groups.
\end{abstract}

\section{Introduction} \label{Sec1}

The moment problem is to determine whether a probability distribution is uniquely determined by its moments. 
A version of the classical moment problem concerns the distribution of a random variable $X \in \R$ and its moments given by the expected values $m_k = \EE(X^k)$ for $k \in \Z_{\geq 0}$. 
It is well-known that if the moments of $X$ are finite and satisfy the Carleman's condition
$$
\sum_{k=1}^{\infty} m_{2k}^{-\frac{1}{2k}} = \infty,
$$
then $X$ is uniquely determined by its moments. 

This can be extended to the joint distribution of (not necessarily independent) random variables $X_1, \hdots, X_r \in \R$. Their mixed moments are defined to be the expected values $\EE(X_1^{k_1} \cdots X_r^{k_r})$ ($k_1, \hdots, k_r \in \Z_{\geq 0}$). There is also a natural generalization of Carleman's condition to the mixed moments which is a sufficient condition for the uniqueness of the joint distribution \cite[Theorem 2.3]{deJ03}.

There have been a lot of work to determine the distribution of random groups from their moments. An important example is the distribution of the cokernels of random matrices over the $p$-adic integers $\Z_p$. Let $\M_{m \times n}(R)$ be the set of $m \times n$ matrices over the ring $R$, $\M_n(R) = \M_{n \times n}(R)$ and $\GG_p$ be the set of isomorphism classes of finite abelian $p$-groups. Friedman and Washington \cite{FW89} proved that if $A_n \in \M_n(\Z_p)$ is a uniform random matrix for each $n$ and $H \in \GG_p$, then
\begin{equation} \label{eq1a}
\lim_{n \rightarrow \infty} \PP(\cok(A_n) \cong H) = \frac{\prod_{k=1}^{\infty}(1-p^{-k})}{\left| \Aut(H) \right|}.  
\end{equation}
Wood \cite{Woo19}, and Nguyen and Wood \cite{NW22a} proved that the same result holds for larger classes of random matrices $A_n$ whose entries are independent and not too concentrated. 
In order to state their theorem, we take the following definition from \cite{Woo19}. For $\varepsilon > 0$, a random variable $x \in \Z_p$ is $\varepsilon$-\textit{balanced} if $\PP(x \equiv r \,\, (\text{mod } p)) \leq 1 - \varepsilon$ for every $r \in \Z / p \Z$. A random matrix $A \in \M_{m \times n}(\Z_p)$ is $\varepsilon$-\textit{balanced} if its entries are independent and $\varepsilon$-balanced. 

\begin{theorem} \label{thm1a}
(\cite[Theorem 4.1]{NW22a}) Let $u \geq 0$ be an integer, $H \in \GG_p$ and $(\alpha_n)_{n \geq 1}$ be a sequence of real numbers such that for every $\Delta > 0$, we have $\alpha_n \geq \frac{\Delta \log n}{n}$ for all sufficiently large $n$. Let $A_n \in \M_{n \times (n+u)}(\Z_p)$ be an $\alpha_n$-balanced random matrix for each $n$.  Then we have
\begin{equation} \label{eq1b}
\lim_{n \rightarrow \infty} \PP(\cok(A_n) \cong H) = \frac{\prod_{k=1}^{\infty} (1-p^{-k-u})}{\left| H \right|^{u} \left| \Aut(H) \right| }.
\end{equation}
\end{theorem}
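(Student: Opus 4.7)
The plan is to use the method of moments for random finite abelian $p$-groups. For each fixed $G \in \GG_p$, define the surjective moment
$$M_G^{(n)} := \EE\bigl(|\Sur(\cok(A_n), G)|\bigr).$$
A direct computation shows that the distribution on $\GG_p$ given by the right-hand side of \eqref{eq1b} has $G$-moment equal to $|G|^{-u}$. By Wood's result \cite{Woo19} that a distribution on $\GG_p$ is determined by its surjective moments provided these moments do not grow too fast, it suffices to prove that $M_G^{(n)} \to |G|^{-u}$ for every fixed $G \in \GG_p$.

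To compute $M_G^{(n)}$, note that a surjection $\cok(A_n) \twoheadrightarrow G$ corresponds to a surjection $F : \Z_p^n \twoheadrightarrow G$ with $F \circ A_n = 0$, and since the columns $a_1, \ldots, a_{n+u}$ of $A_n$ are independent we get
$$M_G^{(n)} = \sum_{F \in \Sur(\Z_p^n,\, G)} \prod_{j=1}^{n+u} \PP(F(a_j) = 0).$$
Identifying $F$ with a tuple $(g_1, \ldots, g_n) \in G^n$ of generators of $G$ and expanding $F(a_j) = \sum_i a_{ij} g_i$, the key input is an anti-concentration estimate for weighted sums in $G$ with $\alpha_n$-balanced coefficients. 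Partitioning the sum over $F$ by a quantitative ``depth'' measuring how close the tuple $(g_1, \ldots, g_n)$ is to landing in a proper subgroup of $G$, the generic $F$'s satisfy $\PP(F(a_j)=0) = (1+o(1))|G|^{-1}$ by a short Fourier argument using the $\alpha_n$-balanced hypothesis, and they contribute a main term of size $|\Sur(\Z_p^n, G)| \cdot |G|^{-(n+u)} \to |G|^{-u}$.

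The main obstacle is controlling the contribution of non-generic $F$'s. A Littlewood--Offord-type argument, exploiting the $\alpha_n$-balancedness of each entry, should give a bound of the form
$$\PP(F(a_j) = 0) \leq (1 - c\alpha_n)^{d(F)} \cdot |G|^{-1}$$
for a depth function $d(F) \in \Z_{\geq 0}$ and a constant $c = c(G) > 0$. Raising this to the power $n+u$ and summing against the combinatorial count of $F$'s of each depth $d$, the hypothesis $\alpha_n \geq \Delta \log n / n$ for every $\Delta > 0$ is precisely what ensures that the decay $(1 - c\alpha_n)^{d(n+u)}$ dominates the count of $F$'s at depth $d$, for every $d \geq 1$ simultaneously. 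The delicate step is setting up the depth decomposition so that these estimates combine cleanly across all depths; once that is done, the non-generic contribution is $o(1)$ and the moment converges to $|G|^{-u}$ as required.
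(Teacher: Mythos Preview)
Your proposal is correct and matches the approach the paper itself describes: the paper does not prove Theorem~\ref{thm1a} directly (it is quoted from \cite[Theorem~4.1]{NW22}), but it summarizes the proof as exactly the two-step moment method you outline --- first showing $\lim_{n\to\infty}\EE(\#\Sur(\cok(A_n),H))=|H|^{-u}$ via \cite[Theorem~4.12]{NW22}, then invoking the moment-determinacy result \cite[Theorem~3.1]{Woo19}. Your ``depth'' decomposition and anti-concentration sketch correspond to the Nguyen--Wood notion of $F$ being a \emph{code of distance} $\delta n$ (see \cite[Definition~4.4, Lemma~4.7, Lemma~4.11]{NW22}), which the present paper also invokes in its own arguments (e.g.\ in the proofs of Propositions~\ref{prop31b} and~\ref{prop31d}).
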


There are analogues of Theorem \ref{thm1a} for various types of random $p$-adic matrices. For every $\varepsilon > 0$, the universality holds for the cokernels of $\varepsilon$-balanced random symmetric matrices over $\Z_p$ \cite[Theorem 1.3]{Woo17}, the cokernels of $\varepsilon$-balanced random alternating matrices over $\Z_p$ \cite[Theorem 1.13]{NW22b} and the cokernels of $\varepsilon$-balanced random Hermitian matrices over the ring of integers $\OO$ of a quadratic extension of $\Q_p$ \cite[Theorem 1.7]{Lee23b}. 
We also note that the lower bound $\alpha_n \geq \frac{\Delta \log n}{n}$ given in Theorem \ref{thm1a} is optimal up to constants \cite[p. 20]{Woo22}. Indeed, if each entry of $A_n$ is chosen to be $0$ with a probability $1 - \frac{\log n}{n}$, then the probability $p_n$ that $A_n$ has a row of all zeroes satisfies
$$
\lim_{n \rightarrow \infty} p_n
= 1 - \lim_{n \rightarrow \infty} \left ( 1 - \left ( 1 - \frac{\log n}{n} \right )^{n+u} \right )^n
= 1 - e^{-1}.
$$
This implies that $\liminf_{n \rightarrow \infty} \PP(\cok(A_n) \text{ is infinite}) \geq 1 - e^{-1} > 0$.

It is very difficult to obtain the distribution of the cokernel of $A_n$ by a direct computation, unless the entries of $A_n$ are equidistributed with respect to Haar measure. 
This shows the necessity of the use of the moments of random groups in the proof of Theorem \ref{thm1a}. 
For a given finite group $H$, the $H$-\textit{moment} of a random finite group $X$ is defined by the expected value $\EE(\# \Sur(X, H))$ of the number of surjective homomorphisms from $X$ to $H$. The proof of Theorem \ref{thm1a} can be divided into two steps: first we verify that the $n \rightarrow \infty$ limit of the moment $\EE(\# \Sur(\cok(A_n), H))$ is $\left| H \right|^{-u}$ for every $H \in \GG_p$ \cite[Theorem 4.12]{NW22a}, then we prove that these moments determine the unique distribution \cite[Theorem 3.1]{Woo19}.

Now we introduce a different kind of generalization of the equation (\ref{eq1a}). Friedman and Washington \cite{FW89} proved that if $A_n$ is a uniform random matrix in $\GL_n(\Z_p)$ for each $n$ and $H \in \GG_p$, then
\begin{equation} \label{eq1c}
\lim_{n \rightarrow \infty} \PP (\cok(A_n - I_n) \cong H) 
= \frac{\prod_{k=1}^{\infty}(1-p^{-k})}{\left| \Aut(H) \right|}.    
\end{equation}
($I_n$ denotes the $n \times n$ identity matrix.) They proved it as a heuristic evidence of the function field analogue of the Cohen-Lenstra heuristics. 
Motivated by this result, Cheong and Huang \cite{CH21} predicted the limiting joint distribution of the cokernels $\cok(P_j(A_n))$ ($1 \leq j \leq r$) where $P_1, \hdots, P_r \in \Z_p[t]$ are monic polynomials with some assumptions and $A_n \in \M_n(\Z_p)$ is a uniform random matrix for each $n$. 
Based on elementary probabilistic methods, the author \cite{Lee23a} proved the conjecture of Cheong and Huang. 

\begin{theorem} \label{thm1b}
Let $A_n \in \M_n(\Z_p)$ be a uniform random matrix for each $n$.
\begin{enumerate}
\item (\cite[Theorem 2.1]{Lee23a}) Let $P_1, \hdots, P_r \in \Z_p[t]$ be monic polynomials whose mod $p$ reductions in $\F_p[t]$ are distinct and irreducible. Also let $H_j$ be a finite module over $R_j := \Z_p[t]/(P_j (t))$ for each $1 \leq j \leq r$. Then we have
\begin{equation*}
\lim_{n \rightarrow \infty} \PP \begin{pmatrix}
\cok(P_j(A_n)) \cong H_j \\
\text{for } 1 \leq j \leq r
\end{pmatrix} = \prod_{j=1}^{r} \frac{\prod_{k=1}^{\infty} (1-p^{-k \deg(P_j)})}{\left| \Aut_{R_j}(H_j) \right|}.
\end{equation*}
    
\item (\cite[Theorem 4.1]{Lee23a}) Let $H_1, H_2 \in \GG_p$ and $(B_n)_{n \geq 1}$ be a sequence of matrices such that $B_n \in \M_n(\Z_p)$ and $\lim_{n \rightarrow \infty} (r_p(B_n) - \log_p n) = \infty$, where $r_p(B_n)$ is the rank of the matrix $\overline{B_n} \in \M_n(\F_p)$ defined by the reduction modulo $p$ of $B_n$. Then we have
\begin{equation*} 
\lim_{n \rightarrow \infty} \PP \begin{pmatrix}
\cok(A_n) \cong H_1 \text{ and} \\
\cok(A_n+B_n) \cong H_2
\end{pmatrix} = \prod_{i=1}^{2} \frac{\prod_{k=1}^{\infty} (1-p^{-k})}{\left| \Aut(H_i) \right| }.
\end{equation*}
\end{enumerate}
\end{theorem}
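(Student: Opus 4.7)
The plan is to prove both statements via the method of mixed moments that motivates the present paper: compute the joint moments $\EE(\prod_j \#\Sur(X_j, H_j))$ of the random groups in question, match them with the joint moments of the conjectured limiting distribution, and then invoke a mixed moment determination result to conclude convergence in distribution.

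For Part (1), the first step is to compute, for fixed finite $R_j$-modules $H_j$, the mixed moment
\[
M_n := \EE\left( \prod_{j=1}^{r} \#\Sur_{R_j}(\cok(P_j(A_n)), H_j) \right).
\]
A tuple of $R_j$-linear surjections from $\cok(P_j(A_n))$ onto $H_j$ corresponds to a tuple of $\Z_p[t]$-linear surjections $\tilde\phi_j : \Z_p^n \twoheadrightarrow H_j$ (where $t$ acts by $A_n$) satisfying $\tilde\phi_j \circ P_j(A_n) = 0$. Bundling the $\tilde\phi_j$ gives a single $\Z_p[t]$-linear surjection $\Z_p^n \twoheadrightarrow \prod_j H_j$ for which the $\Z_p[t]$-module $\prod_j H_j$ is annihilated by $\prod_j P_j(t)$. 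Since the reductions $\overline{P_j} \in \F_p[t]$ are distinct irreducibles, a Hensel/CRT argument shows that any such module splits canonically as $\bigoplus_j H_j$ compatibly with the $\Z_p[t]$-action, so the enumeration of joint surjections factors over $j$. A Friedman--Washington-style computation then gives $M_n \to \prod_j 1 = 1$, which matches the moments of the proposed product limit.

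For Part (2), I would compute
\[
M_n := \EE\bigl( \#\Sur(\cok(A_n), H_1) \cdot \#\Sur(\cok(A_n+B_n), H_2) \bigr).
\]
A valid pair $(\phi_1,\phi_2)$ consists of surjections $\phi_i : \Z_p^n \twoheadrightarrow H_i$ with $\phi_1 A_n = 0$ and $\phi_2 A_n = -\phi_2 B_n$. Writing $\Phi = (\phi_1,\phi_2)$, the joint constraint is that each column $v_k$ of $A_n$ lies in a prescribed coset of $\ker\Phi$ in $\Z_p^n$ (determined by the $k$-th column of $B_n$), a condition whose probability under uniform $A_n$ depends on $|\im\Phi|$ and on whether the targets $-\phi_2 B_n e_k$ lie in $\phi_2(\ker\phi_1)$. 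The rank hypothesis $r_p(B_n) - \log_p n \to \infty$ is then used to show that, as $n \to \infty$, the contribution of $\phi_2 B_n$ decouples from $\phi_1$: for all but a negligible fraction of pairs $(\phi_1,\phi_2)$, the map $\phi_2$ restricted to $\ker\phi_1$ remains surjective onto $H_2$, so that $M_n$ factors asymptotically as the product of two one-variable moments, each tending to $1$.

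Given these limiting moments, one identifies the joint distribution via the mixed-moment uniqueness theorem that is the main technical tool of the paper, extending Wood's single-moment method. The main obstacle I anticipate lies in the moment factorization step: in Part (1) one must promote mod-$p$ coprimality of the $P_j$ to an honest decomposition over $\Z_p[t]$ of the relevant $\Hom$-groups, accounting for cross-terms where the induced action of $A_n$ on $\prod_j H_j$ a priori mixes the factors; in Part (2) one must translate the lower bound on $r_p(B_n)$ into an asymptotic independence statement, which is delicate because the single random matrix $A_n$ is shared between the two cokernels and the surjections $\phi_1,\phi_2$ are linked through a common source.
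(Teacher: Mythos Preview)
This theorem is not proved in the present paper: both parts are quoted from \cite{Lee22a}, where the author established them by what the paper calls ``elementary probabilistic methods'' --- direct manipulation of the Haar distribution via Smith normal form reductions, not via moments. So strictly speaking there is no ``paper's own proof'' to compare against.

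That said, your moment-based plan is exactly the philosophy the present paper develops, and it is instructive to see how far it carries you here. For Part~(2) your outline is essentially the content of Proposition~\ref{prop31d} and Theorem~\ref{thm31e}: decompose pairs $(\phi_1,\phi_2)$ according to the subgroup $G=\im(\phi_1,\phi_2)\le H_1\times H_2$, show that the contribution of $G\neq H_1\times H_2$ is negligible using the rank hypothesis on $B_n$, and apply Theorem~\ref{thm1c}. In fact the paper proves more than Theorem~\ref{thm1b}(2) this way: only $r_p(B_n)\to\infty$ is needed (the $\log_p n$ term is dropped), and the matrices $A_n$ need only be $\alpha_n$-balanced. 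So for Part~(2) your proposal is not just correct but recovers the result as a special case of the paper's Theorem~\ref{thm31e}, whereas the original proof in \cite{Lee22a} was by a different, more hands-on route.

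For Part~(1) there is a genuine gap. The paper carries out the moment computation only in the degree-one case $P_j(t)=t-t_j$ (Proposition~\ref{prop31b}, via Lemma~\ref{lem31a}); that argument relies on the linear relation $F_j v_i=-t_j F_j e_i$, which has no direct analogue when $P_j$ has higher degree and the constraint involves powers of $A_n$. More seriously, the moment-determination machinery you intend to invoke (Theorem~\ref{thm1c}/\ref{thm21c}) is stated for finite abelian $p$-groups, whereas Part~(1) concerns $R_j$-modules with $R_j=\Z_p[t]/(P_j)$; you would need an $R_j$-module analogue of Theorem~\ref{thm21c}, which the paper does not supply. Your CRT/Hensel sketch for factoring the joint surjection count is plausible, but it does not by itself yield the moment value, and the identification step remains unaddressed. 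So for Part~(1) in full generality your approach is a reasonable program but not a proof; the paper itself does not claim to reprove Part~(1) by moments and simply cites \cite{Lee22a}.
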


There are some recent works related to the above result. Cheong and Kaplan \cite[Theorem 1.1]{CK22} independently proved Theorem \ref{thm1b}(1) under the assumption that $\deg(P_j) \leq 2$ for each $j$ by a different method. Cheong, Liang and Strand \cite[Theorem 1.1]{CLS23} computed the probability $\PP (\cok(P(A_n)) \cong H)$ for a monic polynomial $P \in \Z_p[t]$, a $\Z_p[t]/(P(t))$-module $H$ and a fixed $n$. Van Peski \cite[Theorem 1.4]{VP23} computed the joint distribution of
$$
\cok(A_1), \, \cok(A_2A_1), \hdots, \, \cok(A_r \cdots A_1)
$$
for a fixed $n \geq 1$ and uniform random matrices $A_1, \hdots, A_r \in \M_n(\Z_p)$ using explicit formulas for certain skew Hall-Littlewood polynomials. 

It is natural to ask whether we can compute the joint distribution of the multiple cokernels for a sequence of random matrices $(A_n)_{n \geq 1}$ given as in Theorem \ref{thm1a}. 
In order to compute the joint distribution for non-uniform random matrices, we introduce the mixed moments of random groups.
Let $X_1, \hdots, X_r$ be (not necessarily independent) random finite groups. The \textit{mixed moments} of $X_1, \hdots, X_r$ are defined to be the expected values
$$
\EE \left ( \prod_{k=1}^{r} \# \Sur(X_k, G_k) \right ) 
$$
for finite groups $G_1, \hdots, G_r$. 
The following theorem can be easily deduced from Theorem \ref{thm21c} as in \cite[Corollary 9.2]{Woo17}, by taking $a = \prod_{p \in P} p^{e_p+1}$ when $(\prod_{p \in P} p^{e_p}) G_k = 0$ for all $1 \le k \le r$. The proof of Theorem \ref{thm21c} is given in Section \ref{Sub21}. By taking $P$ to be the set $\left\{ p \right\}$, we conclude that mixed moments determine the joint distribution of $X_1, \hdots, X_r \in \GG_p$ if they are not too large. 

\begin{theorem} \label{thm1c}
Let $P$ be a finite set of primes, $\calA$ be the set of finite abelian groups whose order is a product of powers of primes in $P$ and $Y = (Y_1, \hdots, Y_r)$, $X_n = (X_{n, 1}, \hdots, X_{n, r})$ ($n \geq 1$) be $r$-tuples of random groups in $\calA$. Suppose that for every $G_1, \hdots, G_r \in \calA$, we have
\begin{equation*}
\lim_{n \rightarrow \infty} \EE(\prod_{k=1}^{r} \# \Sur(X_{n, k}, G_k))
= \EE(\prod_{k=1}^{r} \# \Sur(Y_k, G_k))
= O \left ( \prod_{k=1}^{r} m(G_k) \right ).
\end{equation*}
(The number $m(G_k)$ is defined just before Theorem \ref{thm21c}.) Then for every $H_1, \hdots, H_r \in \calA$, 
$$
\lim_{n \rightarrow \infty} \PP (X_{n, k} \cong H_k \text{ for } 1 \leq k \leq r)
= \PP (Y_k \cong H_k \text{ for } 1 \leq k \leq r).
$$
\end{theorem}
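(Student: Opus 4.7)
The plan is to deduce Theorem \ref{thm1c} directly from Theorem \ref{thm21c}, which (as the paper previews) is a uniqueness result asserting that a joint distribution of random groups in $\calA^r$ with mixed moments of size $O(\prod_k m(G_k))$ is uniquely determined by its mixed moments. The deduction follows the same template as \cite[Corollary 9.2]{Woo17}: one combines the moment-uniqueness theorem with a tightness and subsequence argument to upgrade convergence of moments to convergence in distribution.

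First I would establish tightness of the sequence of laws of $X_n$ on the countable space $\calA^r$. Since $P$ is finite and every element of $\calA$ is a product of its $p$-parts for $p \in P$, it suffices to control the $p$-rank of each component $X_{n,k}$ for each $p \in P$. Markov's inequality applied to the marginal moments $\EE(\# \Sur(X_{n,k}, (\Z/p\Z)^d))$, which are bounded uniformly in $n$ by hypothesis (taking $G_k = (\Z/p\Z)^d$ and $G_j$ trivial for $j \neq k$), supplies this control. A union bound over the $r$ components and the finitely many primes in $P$ yields tightness of the joint laws.

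Next I would apply a Cantor diagonal argument over the countable set $\calA^r$ to extract a subsequence $(n_j)$ along which
$$\PP(X_{n_j,k} \cong H_k \text{ for } 1 \leq k \leq r) \longrightarrow q(H_1,\ldots,H_r)$$
for every tuple $(H_1,\ldots,H_r)$. Tightness ensures that $q$ defines a genuine probability measure on $\calA^r$. The uniform growth bound on mixed moments, together with the fact that $\# \Sur(X,G) \leq \# \Hom(X,G)$ decomposes as a sum over cosets indexed by $H \twoheadrightarrow$-lattice below $G$, legitimizes dominated convergence, so that the mixed moments of $q$ equal $\EE(\prod_k \# \Sur(Y_k,G_k))$ for every tuple $(G_1,\ldots,G_r)$. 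Theorem \ref{thm21c} then forces $q$ to coincide with the law of $Y$. Since every subsequence of $(X_n)$ admits a further subsequence converging to this same limit, the full sequence converges, which is the asserted conclusion.

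The main obstacle is the tightness step together with the justification of dominated convergence to transfer moment identities to the subsequential limit. Marginal $p$-rank tail bounds follow readily from the moment hypothesis by Markov, but assembling them into tightness for the joint distribution on $\calA^r$ relies crucially on the finiteness of both $P$ and $r$. Once tightness is in hand, the dominated convergence step is routine given the uniform bound $O(\prod_k m(G_k))$, and everything else is a formal consequence of Theorem \ref{thm21c}.
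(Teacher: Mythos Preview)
Your overall strategy (tightness, subsequential limits, then uniqueness via Theorem \ref{thm21c}) is sound and is indeed the template of \cite[Corollary 9.2]{Woo17} that the paper invokes. However, the tightness step has a gap: controlling only the $p$-rank of each $X_{n,k}$ does \emph{not} give tightness on $\calA^r$. The groups $\Z/p^j\Z$ all have $p$-rank $1$ yet escape every finite subset of $\calA$ as $j \to \infty$, so with only a rank bound in hand mass can leak to groups of large exponent, and your subsequential limit $q$ need not be a probability measure. Without that, the identification ``$q = \text{law of }Y$'' in your step using Theorem \ref{thm21c} breaks down (Theorem \ref{thm21c} only pins down $q \otimes \Z/a\Z$ for each $a$, and one needs $q$ to have total mass $1$ to conclude).

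The repair is to bound the exponent as well. Taking $G_k = \Z/p^i\Z$ and the other $G_j$ trivial in the moment hypothesis gives $\EE(\#\Sur(X_{n,k}, \Z/p^i\Z)) = O(m(\Z/p^i\Z)) = O(p^{i/2})$ uniformly in $n$; summing over $0 \le i \le j$ yields $\EE(\#\Hom(X_{n,k}, \Z/p^j\Z)) = O(p^{j/2})$. Since any finite abelian group whose $p$-part has exponent at least $p^j$ admits at least $p^j$ homomorphisms to $\Z/p^j\Z$, Markov's inequality gives $\PP(p\text{-exponent of } X_{n,k} \geq p^j) = O(p^{-j/2})$ uniformly in $n$. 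Together with your rank bound and a union bound over $1 \le k \le r$ and $p \in P$, this yields genuine tightness on $\calA^r$, after which the rest of your argument goes through as written.
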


\begin{remark} \label{rmk1new1}
While this paper was close to completion, we became aware of a recent preprint by Nguyen and Van Peski \cite{NVP23} which also introduced the mixed moments of abelian groups to compute the joint distribution of the cokernels of random matrix products. They proved universality for the limiting joint distribution of $\cok(A_1), \cok(A_2A_1), \hdots, \cok(A_r \cdots A_1)$ where $A_1, \hdots, A_r \in \M_n(\Z_p)$ are $\varepsilon$-balanced random matrices for $\varepsilon > 0$ \cite[Theorem 1.1]{NVP23}, using a result similar to the above theorem \cite[Theorem 9.1]{NVP23}. 
We also note that the above result can be deduced from the proof of \cite[Theorem 6.11]{WW21}, as mentioned in \cite[Section 1.3]{SW22b}.
\end{remark}

In Section \ref{Sec3}, we provide three universality results for the joint distribution of the cokernels of random $p$-adic matrices using Theorem \ref{thm1c}. First we provide a combination of Theorem \ref{thm1a} and the $\deg (P_j) = 1$ case of Theorem \ref{thm1b}(1).

\begin{theorem} \label{thm1d}
(Theorem \ref{thm31c}) Let $t_1, \hdots, t_r$ be integers such that $p \nmid t_j - t_{j'}$ for each $j \neq j'$ and $A_n \in \M_n(\Z_p)$ be as in Theorem \ref{thm1a}. Then we have
\begin{equation*} 
\lim_{n \rightarrow \infty} \PP \begin{pmatrix}
\cok(A_n+t_jI_n) \cong H_j \\
\text{for } 1 \leq j \leq r
\end{pmatrix} = \prod_{j=1}^{r} \frac{\prod_{k=1}^{\infty} (1 - p^{-k})}{\left| \Aut(H_j) \right|}
\end{equation*}
for every $H_1, \hdots, H_r \in \GG_p$.
\end{theorem}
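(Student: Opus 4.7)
The plan is to apply Theorem~\ref{thm1c} with $Y = (Y_1,\dots,Y_r)$ a tuple of independent random groups in $\GG_p$, each drawn from the Cohen--Lenstra measure so that $\PP(Y_j \cong H_j) = |\Aut(H_j)|^{-1}\prod_{k\ge 1}(1-p^{-k})$ and $\EE(\#\Sur(Y_j, G_j)) = 1$ for every $G_j \in \GG_p$. By independence the mixed moments of $Y$ factor as $\prod_j 1 = 1$, and the claimed joint distribution is exactly that of $Y$. Hence it suffices to prove that
\begin{equation*}
\lim_{n\to\infty} \EE\!\left[\prod_{j=1}^r \#\Sur(\cok(A_n+t_jI_n), G_j)\right] = 1,
\end{equation*}
with the uniform bound $O(\prod_j m(G_j))$ required by Theorem~\ref{thm1c}.

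Expanding this mixed moment and packaging a tuple $(\phi_1,\dots,\phi_r)$ with $\phi_j \in \Sur(\Z_p^n, G_j)$ into a single homomorphism $\phi := (\phi_j) : \Z_p^n \to G := \prod_j G_j$, the joint condition $\phi_j A_n = -t_j \phi_j$ becomes $\phi A_n = -T\phi$, where $T \in \End(G)$ acts by $t_j$ on the $j$th factor. By independence of the columns $c_i$ of $A_n$, the probability splits as $\prod_i \PP[\phi(c_i) = -T\phi(e_i)]$. The first key step is an algebraic lemma: under the hypothesis $p \nmid t_j - t_{j'}$ for $j \ne j'$, every $T$-invariant subgroup $U \le G$ decomposes as $U = \prod_j \pi_j(U)$, where $\pi_j : G \to G_j$ is the coordinate projection. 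This follows from Lagrange interpolation, since the polynomials $P_j(x) := \prod_{j'\ne j}(x-t_{j'})/\prod_{j'\ne j}(t_j - t_{j'})$ lie in $\Z_p[x]$ (each denominator factor being a unit in $\Z_p$) and $P_j(T) \in \End(G)$ realizes the projector onto the $j$th factor. Because $\phi A_n = -T\phi$ forces $T(\im\phi) \subseteq \im\phi$, the decomposition applies to $\im\phi$; combined with the surjectivity of each $\phi_j$, it forces $\im\phi = \prod_j G_j = G$, so the sum collapses to one over $\phi \in \Sur(\Z_p^n, G)$.

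It then remains to estimate $\prod_i \PP[\phi(c_i) = -T\phi(e_i)]$ for surjective $\phi$ and to sum. This is precisely the output of the ``code/robust homomorphism'' framework developed by Wood~\cite{Woo19} and Nguyen--Wood~\cite{NW22}: under the growth $\alpha_n \ge \Delta \log n/n$, all but a negligible set of surjective $\phi$ satisfy $\PP[\phi(c_i) = v] = |G|^{-1}(1+o(1))$ uniformly in $v \in G$, while the non-code $\phi$ contribute $o(1)$ in total, yielding a main term $|\Sur(\Z_p^n, G)|/|G|^n \cdot (1+o(1)) \to 1$. The main technical step is transporting the Nguyen--Wood estimates (originally designed for the target $0$) to the prescribed targets $v_i = -T\phi(e_i)$ and verifying the uniform bound $O(\prod_j m(G_j))$ demanded by Theorem~\ref{thm1c}; the code condition is intrinsic to $\phi$, so this adaptation should be essentially automatic, leaving the algebraic decomposition lemma as the sole genuinely new ingredient, which is precisely what enforces independence of the limiting cokernels.
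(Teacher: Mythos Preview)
Your proposal is correct and follows essentially the same route as the paper: both reduce to showing the mixed moments equal $1$ (Proposition~\ref{prop31b}), then invoke Theorem~\ref{thm1c}. Both collapse the sum over tuples $(\phi_1,\dots,\phi_r)$ with each $\phi_j$ surjective to a sum over $\phi\in\Sur(V,\prod_j G_j)$, and both finish by citing the Nguyen--Wood machinery with the observation that the target in $\PP(FX=A)$ need not be $0$.

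The one point of divergence is the algebraic lemma forcing $\im\phi=G$. The paper (Lemma~\ref{lem31a}) argues by induction on $r$: from $F_j v_i=-t_jF_je_i$ it exhibits $F(v_i+t_1e_i)\in\{0\}\times H_1^c$ and concludes via the inductive hypothesis. Your argument is a cleaner CRT/Lagrange interpolation: the idempotents $P_j(T)\in\Z_p[T]$ (well-defined because each $t_j-t_{j'}$ is a $p$-adic unit) realize the coordinate projections, so any $T$-invariant subgroup splits as a product of its projections. This is arguably more conceptual and makes the role of the hypothesis $p\nmid t_j-t_{j'}$ completely transparent, but the two arguments are interchangeable and of similar length.

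One small remark: the bound $O(\prod_j m(G_j))$ you flag is trivially satisfied since the limiting mixed moment is $1$; there is nothing further to verify there.
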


The next theorem is a generalization of Theorem \ref{thm1b}(2). Note that we have removed the term $\log_p n$ which appears in Theorem \ref{thm1b}(2).

\begin{theorem} \label{thm1e}
(Theorem \ref{thm31e}) Let $u \geq 0$ be an integer, $A_n \in \M_{n \times (n+u)}(\Z_p)$ be as in Theorem \ref{thm1a} and $(B_n)_{n \geq 1}$ be a sequence of matrices such that $B_n \in \M_{n \times (n+u)}(\Z_p)$ and $\lim_{n \rightarrow \infty} r_p(B_n) = \infty$ where $r_p(B_n)$ is the rank of $\overline{B_n} \in \M_n(\F_p)$. Then we have
\begin{equation*} 
\lim_{n \rightarrow \infty} \PP \begin{pmatrix}
\cok(A_n) \cong H_1 \text{ and} \\
\cok(A_n+B_n) \cong H_2
\end{pmatrix} = \prod_{i=1}^{2} \frac{\prod_{k=1}^{\infty} (1-p^{-k-u})}{\left| H_i \right|^{u} \left| \Aut(H_i) \right| }
\end{equation*}
for every $H_1, H_2 \in \GG_p$.
\end{theorem}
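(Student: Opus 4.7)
The strategy is to apply Theorem~\ref{thm1c} and reduce to a computation of mixed moments. Since the $G$-moment of the Cohen--Lenstra distribution with parameter $u$ equals $|G|^{-u}$, it suffices to show that for every pair $G_1, G_2 \in \GG_p$,
\begin{equation*}
\lim_{n \to \infty} \EE \bigl( \#\Sur(\cok(A_n), G_1) \cdot \#\Sur(\cok(A_n + B_n), G_2) \bigr) = |G_1|^{-u}\,|G_2|^{-u},
\end{equation*}
together with the matching moment bound required as input for Theorem~\ref{thm1c}.

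Expand the expectation as $\sum_{F_1, F_2} \PP(F_1 A_n = 0 \text{ and } F_2 A_n = -F_2 B_n)$, the sum running over surjections $F_1 : \Z_p^n \twoheadrightarrow G_1$ and $F_2 : \Z_p^n \twoheadrightarrow G_2$. Writing $\phi = (F_1, F_2) : \Z_p^n \to G_1 \times G_2$ with image $K = \im(\phi)$, the constraints on the $i$-th column $\mathbf{a}_i$ of $A_n$ combine into $\phi(\mathbf{a}_i) = (0, -F_2 \mathbf{b}_i)$, where $\mathbf{b}_i$ is the $i$-th column of $B_n$, and by independence of the columns of $A_n$ the joint probability factors over $i$. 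The central tool is the quantitative equidistribution estimate for $\alpha_n$-balanced random vectors developed in \cite{Woo19, NW22}: for ``good'' $\phi$ and any $v \in K$, $\PP(\phi(\mathbf{a}_i) = v) = |K|^{-1}(1 + o(1))$, while the small number of ``bad'' pairs contributes negligibly under the hypothesis $\alpha_n \geq \Delta \log n / n$.

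Organize the sum over good pairs by the image $K$. By Goursat's lemma, $K$ is the fiber product $G_1 \times_Q G_2$ over a common quotient $Q$ of $G_1$ and $G_2$, with $|K| = |G_1||G_2|/|Q|$. The consistency requirement $(0, -F_2 \mathbf{b}_i) \in K$ for every column $i$ is equivalent to the common induced map $\bar F : \Z_p^n \to Q$ (obtained from $\phi_2 \circ F_2 = \phi_1 \circ F_1$) factoring through $\cok(B_n)$. When $Q = 1$, the requirement is vacuous and the contribution is
\begin{equation*}
\frac{\#\Sur(\Z_p^n, G_1 \times G_2)}{(|G_1||G_2|)^{n+u}} \longrightarrow |G_1|^{-u} |G_2|^{-u},
\end{equation*}
which is the desired main term. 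When $Q \neq 1$, the number of admissible $\bar F$ is at most $|Q|^{n - r_p(B_n)}$ because $\cok(B_n)$ has mod-$p$ rank at most $n - r_p(B_n)$; combined with at most $(|G_1|/|Q|)^n (|G_2|/|Q|)^n$ lifts of each $\bar F$ to a pair $(F_1, F_2)$ and the factor $|K|^{-(n+u)} = |Q|^{n+u}/(|G_1||G_2|)^{n+u}$, the total contribution from each nontrivial $Q$ is $O(|Q|^{u - r_p(B_n)})$, and this vanishes as $n \to \infty$ since $r_p(B_n) \to \infty$. This is precisely the mechanism that allows the $\log_p n$ threshold appearing in Theorem~\ref{thm1b}(2) to be removed.

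The main obstacle is making the equidistribution step uniform in the combined map $\phi = (F_1, F_2)$: the estimates in \cite{Woo19, NW22} are formulated for a single surjection $\Z_p^n \twoheadrightarrow G$, and here they must be applied with $K \leq G_1 \times G_2$ in place of $G$ with an error bound uniform enough that summation over $(F_1, F_2)$ respects the Goursat decomposition above. Once this uniformity is established, the counting argument runs routinely and the moment bound needed to invoke Theorem~\ref{thm1c} follows from the same estimates.
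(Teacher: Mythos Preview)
Your proposal is correct and matches the paper's proof (Proposition~\ref{prop31d} combined with Theorem~\ref{thm1c}): the paper stratifies by the image $G \leq H_1 \times H_2$ of $(F_1,F_2)$, identifies the main term at $G = H_1 \times H_2$ (your $Q=1$), and bounds the proper-$G$ contributions via the rank hypothesis on $B_n$ together with the code/non-code dichotomy from \cite{NW22}. The only cosmetic difference is that the paper counts the error-term pairs by first bounding $F_1$ on a basis adapted to the columns of $B_n$ and then bounding $F_2$ on a basis adapted to $\ker F_1$, rather than counting $\bar F$ and then lifts as you do; both give the required decay.
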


\begin{remark} \label{rmk1f}
If two events $\cok(A_n)=0$ and $\cok(A_n+B_n)=0$ are asymptotically independent, then we have $\lim_{n \rightarrow \infty} r_p(B_n) = \infty$ by \cite[Proposition 4.5]{Lee23a}. This shows that Theorem \ref{thm1e} is best possible in the sense that we cannot weaken the condition $\lim_{n \rightarrow \infty} r_p(B_n) = \infty$.
\end{remark}

Finally, we consider the joint distribution of $\cok(A)$ and $\cok(A+pI_n)$. Unlike the first two applications, the cokernels $\cok(A)$ and $\cok(A+pI_n)$ have the same $p$-rank so they cannot be asymptotically independent. 
For a finite abelian $p$-group $G$, denote its $p$-rank by $r_p(G) := \rank_{\F_p}(G / pG)$. Define $c_r(p) := \prod_{k=1}^{r} (1-p^{-k})$ and $c_{\infty}(p) := \prod_{k=1}^{\infty} (1-p^{-k})$. 

\begin{theorem} \label{thm1g}
(Theorem \ref{thm32f}) Let $A_n \in \M_n(\Z_p)$ be as in Theorem \ref{thm1a}. Then we have
\begin{equation*} 
\lim_{n \rightarrow \infty} \PP \begin{pmatrix}
\cok(A_n) \cong H_1 \text{ and} \\
\cok(A_n+pI_n) \cong H_2
\end{pmatrix} = \left\{\begin{matrix}
0 & (r_p(H_1) \neq r_p(H_2)) \\
\frac{p^{r^2} c_{\infty}(p)c_{r}(p)^2}{\left| \Aut(H_1) \right| \left| \Aut(H_2) \right|} & (r_p(H_1) = r_p(H_2) = r)
\end{matrix}\right.
\end{equation*}
for every $H_1, H_2 \in \GG_p$.
\end{theorem}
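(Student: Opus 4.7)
The plan is to invoke the mixed moment determination in Theorem \ref{thm1c}: first compute the joint mixed moments
$$M(G_1, G_2) = \lim_{n \to \infty} \EE\bigl(\#\Sur(\cok(A_n), G_1) \cdot \#\Sur(\cok(A_n + pI_n), G_2)\bigr)$$
for every pair $G_1, G_2 \in \GG_p$, then verify that the explicit distribution on the right-hand side of the theorem has the same mixed moments, and finally apply Theorem \ref{thm1c} to conclude.

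For the moment calculation, we expand
$$\#\Sur(\cok(A_n), G_1) \cdot \#\Sur(\cok(A_n+pI_n), G_2) = \sum_{F_1, F_2} \mathbf{1}\{F_1 A_n = 0\} \cdot \mathbf{1}\{F_2 A_n = -p F_2\},$$
where $F_i$ ranges over surjective homomorphisms $\Z_p^n \twoheadrightarrow G_i$. Writing $\phi = (F_1, F_2) \colon \Z_p^n \to G_1 \times G_2$ and using the column-independence of the entries of $A_n$, the expectation of each summand factors as a product over the columns of $A_n$. A Fourier analysis of $\alpha_n$-balanced random vectors (as in the proofs of Theorem \ref{thm1a} and related results of Wood and Nguyen-Wood) shows that, up to negligible error, only ``robust'' pairs $(F_1, F_2)$ contribute, and for them the relevant probability is essentially $|G_1|^{-n}|G_2|^{-n}$ times an explicit correction depending on the mod-$p$ structure of $\phi$.

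The key structural constraint is that modulo $p$ the two conditions become $\overline{F_1}\,\overline{A_n} = 0$ and $\overline{F_2}\,\overline{A_n} = 0$ (the second because $-pF_2$ vanishes modulo $p$), so $\overline{F_1}$ and $\overline{F_2}$ both factor through $\cok(\overline{A_n})$. This is precisely the mechanism forcing $r_p(H_1) = r_p(H_2)$ in the limit. Summing over such pairs and tracking the counting factors, one anticipates $M(G_1, G_2)$ to equal the mixed moment of the distribution in which the common $p$-rank $r$ occurs with probability $\alpha_r := c_\infty(p)/(p^{r^2} c_r(p)^2)$ and, conditional on rank $r$, $H_1$ and $H_2$ are independent, each distributed according to the Cohen-Lenstra measure restricted to $p$-rank $r$. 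The classical Euler identity $\sum_{r \geq 0} 1/(p^{r^2} c_r(p)^2) = 1/c_\infty(p)$ makes this a probability distribution, and a short manipulation using $\PP(H_1, H_2) = \alpha_r \cdot \PP(H_1 \mid r) \cdot \PP(H_2 \mid r)$ shows it is exactly the formula stated in the theorem.

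The main obstacle is the mixed moment calculation itself: one must carefully track the coupling between the two cokernel conditions, which share the random matrix $A_n$. Unlike Theorem \ref{thm1e}, where $\lim_{n \to \infty} r_p(B_n) = \infty$ permits the two conditions to decouple and yields an independent product limit, here $B_n = pI_n$ has mod-$p$ rank $0$, so the coupling is maximal and the limiting joint distribution is genuinely non-product. Making the Fourier estimate uniform across all robust pairs $(F_1, F_2)$ in the presence of this coupling, and verifying that only those pairs compatible at the mod-$p$ level contribute, is the technical heart of the argument; the moment-matching step is then a purely combinatorial identity about $|\Aut(H)|$, $\#\Sur$, and partitions with at most $r$ parts.
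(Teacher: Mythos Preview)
Your overall strategy---compute the limiting mixed moments, exhibit a target distribution $Y$ with those moments, and invoke Theorem~\ref{thm1c}---is the same as the paper's, and your description of the moment computation is essentially correct: the paper's Proposition~\ref{prop32e} carries out exactly the analysis you sketch, summing over pairs $(F_1,F_2)$, observing that only those with $pH_i \le F_i(\ker F_{3-i})$ survive (your mod-$p$ observation), and obtaining the explicit answer $M(G_1,G_2)=N(r_p(G_1),r_p(G_2))$, the number of subspaces of $\F_p^{r_1}\times\F_p^{r_2}$ surjecting onto both factors. The bound $N(r_1,r_2)=O(m(G_1)m(G_2))$ needed for Theorem~\ref{thm1c} is then checked directly.

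Where you and the paper diverge is in how the target $Y$ is produced. You propose to verify directly that the explicit formula has mixed moments $N(r_1,r_2)$, calling this ``a purely combinatorial identity.'' The paper does \emph{not} do this. Instead it first proves the theorem for Haar-uniform $A_n$ by direct matrix manipulation (Proposition~\ref{prop32d}, via Lemmas~\ref{lem32a}--\ref{lem32c}): one reduces $P_{I_n}(H_1,H_2)$ to $P_{B_n}(H_1,H_2)$ for uniform $B_n$, then to a question about $\cok(pB_1')$ for a uniform $B_1'\in\M_r(\Z_p)$, which is answered by the Friedman--Washington formula. Since uniform matrices are a special case of Theorem~\ref{thm1a}, Proposition~\ref{prop32e} also gives their limiting moments as $N(r_1,r_2)$; thus the explicit distribution from Proposition~\ref{prop32d} serves as the $Y$ in Theorem~\ref{thm1c}, and no combinatorial moment identity ever needs to be verified.

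Your route is legitimate in principle, but the identity you need---that $\sum_{r\ge 0} p^{r^2}c_\infty(p)c_r(p)^2\bigl(\sum_{r_p(H_1)=r}\#\Sur(H_1,G_1)/|\Aut(H_1)|\bigr)\bigl(\sum_{r_p(H_2)=r}\#\Sur(H_2,G_2)/|\Aut(H_2)|\bigr)$ equals $N(r_p(G_1),r_p(G_2))$---is not as short as you suggest, and you give no indication of how to prove it. The paper's detour through the uniform case trades this identity for concrete linear algebra over $\Z_p$, which is what makes the argument go through cleanly.
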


Our proof of Theorem \ref{thm1g} consists of two parts. First, we prove the theorem under the assumption that $A_n$ are uniform random matrices based on the techniques used in \cite{Lee23a}. 
Second, we extend this to the general case by computing the mixed moments of $(\cok(A_n), \cok(A_n + pI_n))$ and applying Theorem \ref{thm1c}. 

In the rest of this section, we consider the distribution of random non-abelian groups. First we introduce the work of Liu and Wood \cite{LW20} on a random group given by the quotient of the free profinite group by random relations. 
For a finite set $\CC$ of finite groups, let $\overline{\CC}$ be the smallest set of finite groups containing $\CC$ which is closed under taking quotients, subgroups and finite direct products. 
We say a profinite group $G$ is \textit{level}-$\CC$ if $G \in \overline{\CC}$. 
Let $\GG$ be the set of isomorphism classes of profinite groups $G$ such that $G^{\CC}$ is finite for every finite set $\CC$ of finite groups, where $G^{\CC}$ is the inverse limit of level-$\CC$ quotients of $G$. For a profinite group $G \in \GG$, we have $G \in \overline{\CC}$ if and only if $G^{\CC} = G$ (\cite[p. 127]{LW20}).
We endow $\GG$ with a topology whose basic open sets are of the form $U_{\CC, H} := \left\{ G \in \GG \mid G^{\CC} \cong H \right\}$, where $\CC$ is a finite set of finite groups and $H$ is a finite group. In this paper, every measure on $\GG$ is assumed to be a Borel measure. For $g_1, g_2, \hdots, g_r \in F_n$, $\left< g_1, g_2, \hdots, g_r \right>$ denotes the closed normal subgroup of $F_n$ generated by the elements $g_1, g_2, \hdots, g_r$. 

\begin{theorem} \label{thm1h}
(\cite[Theorem 1.1]{LW20}) Let $u \geq 0$ be an integer, $F_n$ be the free profinite group on $n$ generators and $r_1, \hdots, r_{n+u}$ be independent Haar random elements of $F_n$. 
Then there is a probability measure $\mu_u$ on $\GG$ such that the distributions of $F_n/\left< r_1, \hdots, r_{n+u} \right>$ weakly converge in distribution to $\mu_u$ as $n \rightarrow \infty$.
\end{theorem}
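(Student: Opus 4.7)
The plan is to apply the method of moments on the space $\GG$ of profinite groups. Set $G_n := F_n / \left< r_1, \ldots, r_{n+u} \right>$. First I would compute the $H$-moments $\EE(\#\Sur(G_n, H))$ for each finite group $H$, show they converge as $n \to \infty$, and then invoke a moment-determinacy theorem for random groups valued in $\GG$ (of the kind extending Sawin's work referenced in the introduction) to upgrade convergence of moments to weak convergence of $G_n$ to a unique probability measure $\mu_u$.

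The moment computation is direct. A surjection $\pi : G_n \to H$ corresponds bijectively to a surjection $\tilde{\pi} : F_n \to H$ with $\tilde{\pi}(r_i) = e_H$ for every $i$, since $\ker(\tilde{\pi})$ is closed and normal and hence contains $\left< r_1, \ldots, r_{n+u} \right>$ if and only if it contains each $r_i$. For a fixed $\tilde{\pi}$, the pushforward of Haar measure on $F_n$ under the continuous surjection $\tilde{\pi}$ is the uniform measure on $H$, so $\PP(\tilde{\pi}(r_i) = e_H) = |H|^{-1}$; independence of the $r_i$ then gives
$$
\EE(\#\Sur(G_n, H)) = \frac{\#\Sur(F_n, H)}{|H|^{n+u}}.
$$
M\"obius inversion on the subgroup lattice of $H$ gives $\#\Sur(F_n, H) = |H|^n(1+o(1))$ as $n \to \infty$ (every proper subgroup $K < H$ satisfies $|K| \leq |H|/2$), so the limiting $H$-moment is $|H|^{-u}$.

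It remains to produce the limit measure and upgrade moment convergence to weak convergence on $\GG$. For each finite set $\CC$ of finite groups, the limiting moments $\left\{ |H|^{-u} \right\}_{H \in \overline{\CC}}$ determine, by a finite M\"obius inversion over the poset of isomorphism classes of level-$\CC$ groups, a unique probability distribution $\mu_u^{\CC}$ on that finite set; these distributions are compatible under refinement of $\CC$ and assemble into a Borel probability measure $\mu_u$ on $\GG$. The same inversion expresses $\PP(G_n^{\CC} \cong H)$ as a linear combination of the moments $\EE(\#\Sur(G_n, H'))$ over level-$\CC$ groups $H'$, so convergence of moments yields $\PP(G_n \in U_{\CC, H}) \to \mu_u(U_{\CC, H})$ for every basic open set $U_{\CC, H}$, which is exactly weak convergence on $\GG$. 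The main obstacle is the non-abelian moment problem in this generality: one must check that the moment-to-distribution inversion is actually well-defined and that a Carleman-type growth condition holds to ensure uniqueness of $\mu_u$. Here the limiting moments are uniformly bounded by $1$, so the growth condition is trivially satisfied and the whole determinacy step reduces to finite-dimensional linear algebra in each level $\CC$; once this is established, the theorem follows.
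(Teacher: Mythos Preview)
The paper does not prove Theorem~\ref{thm1h}; it is quoted verbatim from \cite[Theorem 1.1]{LW20} and used as a black box (the paper only remarks that an explicit formula for $\mu_u$ on each basic open $U_{\CC,H}$ is given in \cite[Equation (3.2)]{LW20}, and later in Lemma~\ref{lem4a} it computes the moments of $\mu_u$ \emph{after} already knowing $\mu_u$ exists). So there is no ``paper's own proof'' to compare against.

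That said, your outline is a reasonable alternative route and is essentially correct at the level of a sketch, but the order of logic is slightly off and one step is glossed over. The moment-determinacy theorems you want to invoke (Sawin's, or Theorem~\ref{thm22g} here) take as \emph{input} a target measure $\mu$ and deduce $\mu_t\to\mu$; they do not manufacture $\mu$ from a list of moments. Your real argument is the finite one: for each level $\CC$ the set $\overline{\CC}$ of isomorphism classes is finite, the matrix $\bigl(\#\Sur(G,H)\bigr)_{G,H\in\overline{\CC}}$ is lower-triangular with nonzero diagonal $\lvert\Aut(H)\rvert$ (order the groups by cardinality), hence invertible, so convergence of the finitely many moments $\EE(\#\Sur(G_n^{\CC},H))$ forces convergence of each $\PP(G_n^{\CC}\cong K)$; the limit is automatically a probability vector because it is a limit of probability vectors on a finite set. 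Compatibility under refinement of $\CC$ then lets you assemble the $\mu_u^{\CC}$ into a Borel probability measure on the inverse-limit space $\GG$. This is correct, but you should state explicitly (i) why the matrix is invertible, (ii) why no mass escapes at each finite level, and (iii) why the inverse-limit/Kolmogorov step produces a genuine probability measure on $\GG$ --- none of these is the ``Carleman-type'' moment problem you allude to, and invoking Sawin here is circular since you have not yet built the target measure.
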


We refer \cite[Equation (3.2)]{LW20} for an explicit formula for $\mu_u$ on each basic open set $U_{\CC, H}$.

Random non-abelian groups can be applied to the study of the distribution of the Galois group of maximal unramified extension of global fields. Let $\Gamma$ be a finite group, $\F_q$ be a finite field of order $q$ such that $(q, \left| \Gamma \right|) = 1$ and $K/\F_q[t]$ be a $\Gamma$-extension which splits completely at $\infty$. Define $K^{\#}$ to be the maximal unramified extension of $K$ such that every finite subextension $L$ of $K^{\#}/K$ satisfies the condition $([L:K], q(q-1)\left| \Gamma \right|) = 1$ and
$K^{\#}/K$ splits completely at places over $\infty$. Then the Galois group $\Gal(K^{\#}/K)$ has a $\Gamma$-group structure by conjugation. (A $\Gamma$-\textit{group} is a profinite group with a continuous action of $\Gamma$.) 

Let $\CC$ be a finite set of finite $\Gamma$-groups such that $(\left| \CC \right|, \left| \Gamma \right|) = 1$. Here $\left| \CC \right|$ denotes the least common multiple of the orders of elements of $\CC$. 
Liu, Wood and Zureick-Brown \cite[Theorem 1.4]{LWZ19} proved that the limit of the $H$-moment of the random Galois group $\Gal(K^{\#}/K)$ is given by $[H: H^{\Gamma}]^{-1}$. 
Sawin proved that these moments determine the distribution of a random finite level-$\CC$ $\Gamma$-group \cite[Theorem 1.2]{Saw20} and computed the limiting distribution of the random group $\Gal(K^{\#}/K)^{\CC}$ \cite[Theorem 1.1]{Saw20} using \cite[Theorem 1.4]{LWZ19}. We refer \cite[Section 2.4]{Woo22} for a detailed exposition on these works.

Let $\GG_{\Gamma}$ be the set of isomorphism classes of $\Gamma$-groups with finitely many surjections to any finite group, and all continuous finite quotients of order relatively prime to $\left| \Gamma \right|$. (It is same as the definition of $\GG$ in \cite[Section 2.4]{Woo22}.) 
When $\Gamma$ is a trivial group, then $\GG_{\Gamma}$ is homeomorphic to $\GG$ by Lemma \ref{lem22h}. Let $\Sur_{\Gamma}$ be the number of $\Gamma$-equivariant surjections between two $\Gamma$-groups. 
The following theorem will be proved in Section \ref{Sub22} using Theorem \ref{thm22g}. It is an $r$-tuple version of \cite[Corollary 2.22]{Woo22}. 

\begin{theorem} \label{thm1i}
Let $Y = (Y_1, \hdots, Y_r)$ and $X_t = (X_{t,1}, \hdots, X_{t, r})$ ($t = 0, 1, \hdots$) be $r$-tuples of random groups in $\GG_{\Gamma}$. Assume that for every finite $\Gamma$-groups $H_1, \hdots, H_r$, we have
\begin{equation} \label{eq1d}
\lim_{t \rightarrow \infty} \EE(\prod_{s=1}^{r} \# \Sur_{\Gamma}(X_{t, s}, H_s)) = \EE(\prod_{s=1}^{r} \# \Sur_{\Gamma}(Y_{s}, H_s)) = O \left ( ( \prod_{s=1}^{r} \left| H_s \right| )^{O(1)} \right ).  
\end{equation}
Then the distributions of $X_t \in \GG_{\Gamma}^r$ weakly converge in distribution to $Y$ as $t \rightarrow \infty$.
\end{theorem}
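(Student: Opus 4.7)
The plan is to apply Theorem \ref{thm22g}---which says that mixed moments determine the joint distribution of $r$-tuples of random groups in $\GG_\Gamma$---to any weak subsequential limit of $(X_t)$, after establishing uniform integrability of the relevant moment functionals. Once any subsequential limit is forced to equal $Y$ in distribution, every subsequence has a further subsequence converging weakly to $Y$, hence the full sequence converges weakly to $Y$.

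To set up uniform integrability, I would first establish uniform higher-moment bounds for $\prod_s \#\Sur_\Gamma(X_{t,s}, H_s)$. Using the identity $\#\Hom_\Gamma(G,H)^k = \#\Hom_\Gamma(G, H^k)$ (with the diagonal $\Gamma$-action on $H^k$), together with $\#\Sur_\Gamma \leq \#\Hom_\Gamma$ and the decomposition $\#\Hom_\Gamma(G,A) = \sum_{K \leq A} \#\Sur_\Gamma(G,K)$ over $\Gamma$-subgroups $K$, one obtains
\begin{equation*}
\prod_{s=1}^r \#\Sur_\Gamma(X_{t,s}, H_s)^k \leq \sum_{K_1 \leq H_1^k, \ldots, K_r \leq H_r^k} \prod_{s=1}^r \#\Sur_\Gamma(X_{t,s}, K_s),
\end{equation*}
so taking expectations and applying hypothesis \eqref{eq1d} bounds the $k$-th moment of $\prod_s \#\Sur_\Gamma(X_{t,s}, H_s)$ uniformly in $t$. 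For tightness in $\GG_\Gamma^r$, relative compactness in $\GG_\Gamma$ amounts to uniform bounds on the size of level-$\CC$ quotients for each finite set $\CC$; combining this with the Markov-type estimate $\PP(X_{t,s}^\CC \cong H) \leq \EE(\#\Sur_\Gamma(X_{t,s}, H))/|\Aut_\Gamma(H)|$ and the moment hypothesis should yield $\PP(|X_{t,s}^\CC| \geq N) \to 0$ uniformly in $t$ as $N \to \infty$.

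Given tightness, any subsequence of $(X_t)$ has a further subsequence $X_{t_k}$ converging weakly to some $X^* \in \GG_\Gamma^r$. Since $\#\Sur_\Gamma(\cdot, H_s)$ factors through the projection $G \mapsto G^{\{H_s\}}$ and is therefore locally constant on $\GG_\Gamma$, the product $\prod_s \#\Sur_\Gamma(\cdot, H_s)$ is continuous on $\GG_\Gamma^r$; the $L^2$ bound above supplies uniform integrability, so
\begin{equation*}
\EE\Bigl(\prod_s \#\Sur_\Gamma(X^*_s, H_s)\Bigr) = \lim_k \EE\Bigl(\prod_s \#\Sur_\Gamma(X_{t_k,s}, H_s)\Bigr) = \EE\Bigl(\prod_s \#\Sur_\Gamma(Y_s, H_s)\Bigr)
\end{equation*}
by \eqref{eq1d}, and Theorem \ref{thm22g} forces $X^* = Y$ in distribution. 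The main obstacle is the tightness step, since one must control the tail sum $\sum_{|H| \geq N} \EE(\#\Sur_\Gamma(X_{t,s}, H))/|\Aut_\Gamma(H)|$ over all level-$\CC$ $\Gamma$-groups $H$ and show that it vanishes as $N \to \infty$ uniformly in $t$, which requires a combinatorial input on the number and automorphism sizes of finite level-$\CC$ $\Gamma$-groups of a given order. Modulo this technical input, the argument parallels the single-variable case \cite[Corollary 2.22]{Woo22}.
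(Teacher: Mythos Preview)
Your route is genuinely different from the paper's, and considerably more circuitous. The paper does not use tightness, subsequential limits, or uniform integrability at all. Instead it observes that weak convergence in $\GG_\Gamma^r$ reduces (as in \cite[Theorem 1.1]{LW20}) to checking
\[
\lim_{t\to\infty}\PP\bigl(X_{t,s}^{\CC_s}\cong H_s\ \text{for }1\le s\le r\bigr)=\PP\bigl(Y_s^{\CC_s}\cong H_s\ \text{for }1\le s\le r\bigr)
\]
for each choice of levels $\CC_1,\dots,\CC_r$ and each $(H_1,\dots,H_r)\in\mfm=\mfm_1\times\cdots\times\mfm_r$. One then pushes the distributions of $(X_{t,1},\dots,X_{t,r})$ and $(Y_1,\dots,Y_r)$ forward under $G\mapsto G^{\CC_s}$ to obtain measures $\mu_t,\mu$ on the discrete set $\mfm$. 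Since $\Sur_\Gamma(X_{t,s},G_s)\cong\Sur_\Gamma(X_{t,s}^{\CC_s},G_s)$ for level-$\CC_s$ groups $G_s$, hypothesis \eqref{eq1d} verifies conditions \eqref{eq22f} and \eqref{eq22g}, and Theorem \ref{thm22g} directly yields the required pointwise convergence.

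The point you are missing is that Theorem \ref{thm22g} is already a \emph{convergence} statement for a sequence $\mu_t$ of measures, not merely a uniqueness result for a single measure: feed it the sequence $\mu_t$ and it outputs $\mu_t(H_1,\dots,H_r)\to\mu(H_1,\dots,H_r)$. There is thus no need to extract a subsequential limit first and identify it afterward. Consequently the tightness step you flag as the main obstacle---controlling the tail $\sum_{|H|\ge N}\EE(\#\Sur_\Gamma(X_{t,s},H))/|\Aut_\Gamma(H)|$ uniformly in $t$---simply never arises in the paper. Your approach may be completable, but it depends on exactly the combinatorial input about level-$\CC$ $\Gamma$-groups that you are worried about, whereas the paper's argument is a two-line reduction once Theorem \ref{thm22g} is in hand.
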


Motivated by the non-abelian Cohen-Lenstra heuristics (i.e. the study of the distribution of the random profinite group $\Gal(K^{\#}/K)$), Sawin and Wood \cite{SW22b} studied the moment problem for random objects in a general category. They proved existence, uniqueness and robustness for the moment problem on a \textit{diamond category} \cite[Theorem 1.6]{SW22b}. Moreover, unlike the previous approaches, their method provides a way to construct a distribution from its moments. 


In Section \ref{Sec4}, we extend Theorem \ref{thm1h} to the joint distribution of two random profinite groups in $\GG$. We prove the following non-abelian analogue of Theorem \ref{thm1e} using Theorem \ref{thm1i}. 

\begin{theorem} \label{thm1j}
(Theorem \ref{thm4d}) Let $u \geq 0$ be an integer, $r_1, \hdots, r_{n+u}$ be independent uniform random elements of $F_n$ and $b_{n, 1}, \hdots, b_{n, n+u}$ be given elements of $F_n$ for each $n$. Assume that $\lim_{n \rightarrow \infty} d_n = \infty$, where $d_n$ is the maximum size of a subset $S \subset \left< b_{n, 1}, \hdots, b_{n, n+u} \right>$ which can be extended to a generating set of $F_n$. Then the joint distributions of
$$
(F_n/\left< r_1, \hdots, r_{n+u} \right>, \, F_n/\left< r_1 b_{n, 1}, \hdots, r_{n+u} b_{n, n+u} \right>)
$$
weakly converge in distribution to the probability measure $\mu_u \times \mu_u$ on $\GG^2$ as $n \rightarrow \infty$.
\end{theorem}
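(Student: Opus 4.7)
The plan is to invoke Theorem~\ref{thm1i} with $\Gamma$ trivial (using Lemma~\ref{lem22h} to identify $\GG_{\Gamma}$ with $\GG$). Writing $X_{n,1}=F_n/\langle r_1,\dots,r_{n+u}\rangle$ and $X_{n,2}=F_n/\langle r_1 b_{n,1},\dots,r_{n+u}b_{n,n+u}\rangle$, it suffices to show that
\[
\lim_{n\to\infty} \EE\!\left[\#\Sur(X_{n,1},H_1)\cdot\#\Sur(X_{n,2},H_2)\right] = |H_1|^{-u}|H_2|^{-u}
\]
for every pair of finite groups $H_1,H_2$, since the right-hand side is the mixed moment of $\mu_u\times\mu_u$ (the product measure factorizes, and each $H$-moment of $\mu_u$ equals $|H|^{-u}$ by the Liu--Wood computation underlying Theorem~\ref{thm1h}). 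The growth bound in (\ref{eq1d}) will then be trivial, the limit being $\le 1$.

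To expand the moment, observe that a surjection $\phi_1\colon F_n\twoheadrightarrow H_1$ descends to $X_{n,1}$ iff $\phi_1(r_i)=1$ for all $i$, and a surjection $\phi_2\colon F_n\twoheadrightarrow H_2$ descends to $X_{n,2}$ iff $\phi_2(r_i)=\phi_2(b_{n,i})^{-1}$ for all $i$. Setting $\Phi=(\phi_1,\phi_2)$ and $K=\Phi(F_n)\le H_1\times H_2$, the independence of the $r_i$ together with the uniformity of each $\Phi(r_i)$ on $K$ gives
\[
\EE\!\left[\#\Sur(X_{n,1},H_1)\#\Sur(X_{n,2},H_2)\right] = \sum_{K}\sum_{\Phi\colon F_n\twoheadrightarrow K}\frac{\prod_{i=1}^{n+u}\mathbf{1}[\phi_2(b_{n,i})\in L(K)]}{|K|^{n+u}},
\]
where $K$ ranges over subgroups of $H_1\times H_2$ with surjective projections and $L(K)=\{h\in H_2:(1,h)\in K\}$.

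The core estimate will be a Goursat-type bound. Such $K$'s correspond to triples $(M_1,M_2,\psi)$ with $M_j\trianglelefteq H_j$, $M_2=L(K)$, and $\psi\colon H_1/M_1\xrightarrow{\sim} H_2/M_2$; under this correspondence, the conditions $\phi_2(b_{n,i})\in M_2$ become: the composition $\bar\Phi\colon F_n\xrightarrow{\Phi}K\twoheadrightarrow H_1/M_1$ kills $N:=\langle b_{n,1},\dots,b_{n,n+u}\rangle$. Picking $\{x_1,\dots,x_{d_n}\}\subset N$ extendable to a basis of $F_n$ shows that $F_n/N$ is a continuous quotient of $F_{n-d_n}$, whence $\#\Hom(F_n/N,H_1/M_1)\le |H_1/M_1|^{n-d_n}$; each such $\bar\Phi$ admits at most $(|K|/|H_1/M_1|)^n=(|M_1||M_2|)^n$ lifts to a map $F_n\to K$, bounding the $K$-contribution by
\[
\frac{|H_1/M_1|^{n-d_n}(|M_1||M_2|)^n}{|K|^{n+u}} = \frac{1}{|H_1/M_1|^{\,d_n+u}(|M_1||M_2|)^u}.
\]
Whenever $K\neq H_1\times H_2$ one has $|H_1/M_1|\ge 2$, so this bound tends to $0$ as $d_n\to\infty$; since the set of $K$'s is finite and depends only on $H_1,H_2$, the off-diagonal contributions vanish in the limit.

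The surviving term $K=H_1\times H_2$ has indicator identically $1$ and equals $\#\Sur(F_n,H_1\times H_2)/|H_1\times H_2|^{n+u}$, which converges to $|H_1|^{-u}|H_2|^{-u}$ by the standard fact that $\#\Sur(F_n,G)/|G|^n\to 1$ for any finite group $G$. This matches the target moment, and Theorem~\ref{thm1i} then delivers the desired weak convergence. The main obstacle will be the off-diagonal estimate: its success hinges on the Goursat identification of the obstruction to $K=H_1\times H_2$ as a non-trivial common quotient of $H_1$ and $H_2$, combined with the hypothesis $d_n\to\infty$, which is precisely tuned via the rank-style bound above to suppress every such non-trivial quotient.
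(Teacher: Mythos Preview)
Your proposal is correct and follows essentially the same route as the paper's proof (Proposition~\ref{prop4c} combined with Lemma~\ref{lem4a} and Theorem~\ref{thm1i}): expand the mixed moment over pairs of surjections, partition by the image subgroup $K\le H_1\times H_2$ (equivalently by the Goursat data $(G_1,G_2)=(\phi_1(\ker\phi_2),\phi_2(\ker\phi_1))$ as the paper does), use the basis hypothesis $d_n\to\infty$ to kill the off-diagonal contributions, and identify the diagonal term with $|H_1|^{-u}|H_2|^{-u}$. Your off-diagonal estimate via the quotient $K\to H_1/M_1$ and a lift count is a slightly cleaner repackaging of the paper's Lemma~\ref{lem4b}, but the underlying mechanism is identical.
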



Recently, Sawin and Wood \cite{SW22a} computed the limiting distribution of the profinite completion of the fundamental group of a random $3$-manifold as the genus goes to infinity. They used random $3$-manifolds given by the Dunfield-Thurston model of random Heegaard splittings \cite{DT06}. This result, along with the works on the distribution of the Galois group $\Gal(K^{\#}/K)$ (e.g. \cite{LWZ19}, \cite{Saw20}) indicates that there would be more applications of the probability theory of random groups. In the future, we hope to find a new application of the probability theory of random groups to number theory.

\section{Mixed moments determine the joint distribution} \label{Sec2}

\subsection{Joint distribution of finite abelian groups} \label{Sub21}

In this section, we prove Theorem \ref{thm21c}. As mentioned in Remark \ref{rmk1new1}, this result is not essentially new. We include the proof for the completeness. 
Before doing that, we present a simple proof for the following special case of Theorem \ref{thm21c}. Recall that $\GG_p$ is the set of isomorphism classes of finite abelian $p$-groups and $c_{\infty}(p) := \prod_{k=1}^{\infty} (1-p^{-k})$.

\begin{proposition} \label{prop21a}
Let $r \geq 1$ be an integer, $p$ be a prime such that $2^{\frac{1}{r}} c_{\infty}(p) > 1$ and $\nu$ be a probability measure on $\GG_p^r$. If
\begin{equation} \label{eq21a}
\sum_{(G_1, \hdots, G_r) \in \GG_p^r} \nu(G_1, \hdots, G_r) \prod_{i=1}^{r} \# \Sur(G_i, H_i) = 1
\end{equation}
for every $H_1, \hdots, H_r \in \GG_p$, then we have $\nu(G_1, \hdots, G_r) = \prod_{i=1}^{r} \frac{c_{\infty}(p)}{\left| \Aut(G_i) \right|}$.
\end{proposition}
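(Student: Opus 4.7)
My plan is to carry out a Möbius-type inversion of the mixed moment map, comparing $\nu$ with the product Cohen--Lenstra measure $\mu$ defined by $\mu(G_1,\ldots,G_r) := \prod_{i=1}^r c_\infty(p)/|\Aut(G_i)|$. First I would verify that $\mu$ is itself a probability measure on $\GG_p^r$ satisfying the hypothesis (\ref{eq21a}): this uses the Hall identity $\sum_{G \in \GG_p} |\Aut(G)|^{-1} = c_\infty(p)^{-1}$ together with the classical Cohen--Lenstra moment identity $\sum_{G \in \GG_p} c_\infty(p) \#\Sur(G,H)/|\Aut(G)| = 1$ applied factor by factor. Setting $\lambda := \nu - \mu$ then produces a signed measure on $\GG_p^r$ with total variation at most $2$ all of whose mixed moments vanish:
$$\sum_{G \in \GG_p^r} \lambda(G) \prod_{i=1}^r \#\Sur(G_i,H_i) = 0 \qquad \text{for every } H = (H_1,\ldots,H_r) \in \GG_p^r.$$
The task reduces to showing $\lambda(G_0) = 0$ for every $G_0 \in \GG_p^r$.

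The key step is to construct, for each fixed $G_0 \in \GG_p^r$, coefficients $\alpha(G_0,H) \in \R$ such that
$$\sum_{H \in \GG_p^r} \alpha(G_0,H) \prod_{i=1}^r \#\Sur(G_i,H_i) = \mathbf{1}_{G = G_0} \qquad \text{for all } G \in \GG_p^r.$$
Because $\#\Sur(G_i,H_i) = 0$ whenever $H_i$ is not a quotient of $G_i$, the system is triangular with respect to any size-respecting ordering on $\GG_p$ and admits a unique recursive solution, factoring as $\alpha(G_0,H) = \prod_i \alpha_1(G_{0,i},H_i)$ where $\alpha_1$ is the single-variable version. Multiplying the identity by $\lambda(G)$, summing over $G$, and formally exchanging the order of summation then yields
$$\lambda(G_0) = \sum_H \alpha(G_0,H) \sum_G \lambda(G) \prod_{i=1}^r \#\Sur(G_i,H_i) = 0,$$
proving $\nu = \mu$, which is the desired conclusion.

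The main technical obstacle is justifying the interchange of summation, i.e.\ showing $\sum_H |\alpha(G_0,H)| < \infty$ with enough margin to absorb the total variation bound. Since $|\lambda| \le \nu + \mu$ and both $\nu$ and $\mu$ satisfy (\ref{eq21a}), the combined absolute double sum is bounded by $2 \sum_H |\alpha(G_0,H)|$. An explicit estimate of the form $|\alpha_1(G_{0,i},H_i)| \le C(G_{0,i})/|\Aut(H_i)|$ derived from the triangular recursion gives $\sum_{H_i} |\alpha_1(G_{0,i},H_i)| \le C(G_{0,i}) c_\infty(p)^{-1}$, and hence $\sum_H |\alpha(G_0,H)| \le \prod_i C(G_{0,i}) \cdot c_\infty(p)^{-r}$. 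The hypothesis $2^{1/r} c_\infty(p) > 1$, equivalent to $c_\infty(p)^r > 1/2$, is precisely the threshold at which the factor $c_\infty(p)^{-r}$ stays below $2$ in the comparison with the total variation bound, so that the series converges and forces $\lambda \equiv 0$. Establishing the explicit bound on $\alpha_1$ and verifying that the constants $C(G_{0,i})$ behave as required is the principal bookkeeping in the argument; once this is in place the rest is formal.
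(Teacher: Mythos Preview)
Your M\"obius-inversion strategy is a genuinely different route from the paper's, but the execution has a concrete gap. The claimed bound $|\alpha_1(G_0,H)| \le C(G_0)/|\Aut(H)|$ is false. Already for $G_0$ trivial and $H=(\Z/p\Z)^n$, the triangular recursion gives
\[
|\alpha_1(1,(\Z/p\Z)^n)| \cdot |\GL_n(\F_p)| = p^{\binom{n}{2}},
\]
so $|\alpha_1(1,H)|\,|\Aut(H)|=p^{\binom{n}{2}}$ is unbounded in $n$; no constant $C(G_0)$ can work. More seriously, your account of how the hypothesis $c_\infty(p)^r>1/2$ enters is not coherent: if your bound held, the Fubini interchange would only require $\sum_H |\alpha(G_0,H)|<\infty$, not $<2$, and the argument would go through for every prime. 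So either the M\"obius approach proves a stronger statement than the proposition (without the hypothesis on $p$), in which case you have not identified why, or it does not go through as written---and the sentence about ``the threshold at which $c_\infty(p)^{-r}$ stays below $2$'' is papering over the fact that you have not actually connected the hypothesis to any step of your argument.

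The paper's proof, by contrast, is a short contraction argument in the style of \cite[Lemma 8.2]{EVW16} in which the hypothesis is visibly the contraction condition. Set $\alpha(H)=\nu(H)\prod_i|\Aut(H_i)|$ and $\beta=c_\infty(p)^{-r}-1$. Isolating the diagonal term $G=H$ in~(\ref{eq21a}) gives $\alpha(H)\le 1$ for every $H$; substituting this universal upper bound back into~(\ref{eq21a}) and using $\sum_G \prod_i \#\Sur(G_i,H_i)/|\Aut(G_i)|=c_\infty(p)^{-r}$ yields $\alpha(H)\ge 1-\beta$. Iterating squeezes $\alpha(H)$ to $\sum_{k\ge 0}(-\beta)^k=c_\infty(p)^r$, and the geometric series converges precisely when $\beta<1$, i.e.\ when $2^{1/r}c_\infty(p)>1$. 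No inverse coefficients or summability estimates are needed.
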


\begin{proof}
We follow the proof of \cite[Lemma 8.2]{EVW16}. Let $\beta = c_{\infty}(p)^{-r}-1 \in (0, 1)$ and
$$
\alpha(G_1, \hdots, G_r) = \nu(G_1, \hdots, G_r) \prod_{i=1}^{r} \left| \Aut(G_i) \right| .
$$
\begin{itemize}
    \item It is clear by the equation (\ref{eq21a}) that $\alpha(H_1, \hdots, H_r) \leq 1$ for every $H_1, \hdots, H_r \in \GG_p$. 
    
    \item By the equation (\ref{eq21a}), we have
    \begin{align*}
    1 & = \alpha(H_1, \hdots, H_r) + \sum_{\substack{(G_1, \hdots, G_r) \\ \neq (H_1, \hdots, H_r)}} \alpha(G_1, \hdots, G_r) \prod_{i=1}^{r} \frac{\left| \Sur(G_i, H_i) \right|}{\left| \Aut(G_i) \right| } \\ 
    & \leq \alpha(H_1, \hdots, H_r) - 1 + \sum_{(G_1, \hdots, G_r) \in \GG_p^r} \prod_{i=1}^{r} \frac{\left| \Sur(G_i, H_i) \right|}{\left| \Aut(G_i) \right| } \\
    & = \alpha(H_1, \hdots, H_r) -1 + \frac{1}{c_{\infty}(p)^r}
    \end{align*}    
    so $\alpha(H_1, \hdots, H_r) \geq 1 - \beta$.

    \item Applying the lower bound $\alpha(G_1, \hdots, G_r) \geq 1 - \beta$ (for all $(G_1, \cdots, G_r) \neq (H_1, \cdots, H_r)$) to the equation (\ref{eq21a}), we obtain $\alpha(H_1, \hdots, H_r) \leq 1 - \beta + \beta^2$. 

    \item Applying the upper bound $\alpha(G_1, \hdots, G_r) \leq 1 - \beta + \beta^2$ (for all $(G_1, \cdots, G_r) \neq (H_1, \cdots, H_r)$) to the equation (\ref{eq21a}), we obtain $\alpha(H_1, \hdots, H_r) \geq 1 - \beta + \beta^2 - \beta^3$. 
\end{itemize}
Iterating this procedure, we conclude that $\alpha(H_1, \hdots, H_r) = \sum_{k=0}^{\infty}(-\beta)^k = c_{\infty}(p)^r$.
\end{proof}

The next lemma is a generalization of \cite[Theorem 8.2]{Woo17}. Its proof is virtually the same as the original theorem, so we omit some details.

\begin{lemma} \label{lem21b}
Let $r, s \geq 1$ be integers, $p_1, \hdots, p_s$ be distinct primes and $m_1, \hdots, m_s \geq 1$ be integers. Let $M_j$ be the set of partitions with at most $m_j$ parts and $M = M_1 \times \cdots \times M_s$. For $\mu \in M$, let $\mu^j = (\mu^j_1 \geq \cdots \geq \mu^j_{m_j}) \in M_j$ be its $j$th entry. For each $\mu = (\mu(1), \hdots, \mu(r)) \in M^r$, let $x_{\mu}$ and $y_{\mu}$ be non-negative real numbers. For each $\lambda = (\lambda(1), \hdots, \lambda(r)) \in M^r$, let $C_{\lambda}$ be a non-negative real number such that
$$
C_{\lambda} \leq \prod_{k=1}^{r} \prod_{j=1}^{s} F^{m_j} p_j^{ \sum_{i=1}^{m_j} \frac{(\lambda(k)_i^j)^2}{2} }
$$
for some constant $F > 0$. Suppose that for all $\lambda \in M^r$,
\begin{equation*} 
\sum_{\mu \in M^r} x_{\mu} \prod_{k=1}^{r}\prod_{j=1}^{s} p_j^{\sum_{i=1}^{m_j} \lambda(k)_i^j \mu(k)_i^j}
=  \sum_{\mu \in M^r} y_{\mu} \prod_{k=1}^{r} \prod_{j=1}^{s} p_j^{\sum_{i=1}^{m_j} \lambda(k)_i^j \mu(k)_i^j}
= C_{\lambda}.
\end{equation*}
Then we have $x_{\mu} = y_{\mu}$ for all $\mu \in M^r$.
\end{lemma}

\begin{proof}
We take the total order $\leq$ on $M$ as in the proof of \cite[Theorem 8.2]{Woo17} and define the partial order $\leq_r$ on $M^r$ by $\lambda \leq_r \lambda'$ if and only if $\lambda(k) \leq \lambda'(k)$ for each $k$. 
For each $1 \leq j \leq s$, $1 \leq k \leq r$ and $\nu \in M^r$, let
$$
H_{m_j, p_j, \nu(k)^j}(z) = \sum_{\substack{d_1, \hdots, d_{m_j} \geq 0 \\ d_2 + \cdots + d_{m_j} \leq \nu(k)^j_1}} a_{d_1, \hdots, d_{m_j}}^{j, k} z_1^{d_1} \cdots z_{m_j}^{d_{m_j}}
$$
be the function given in \cite[Lemma 8.1]{Woo17}. For $\lambda \in M^r$, define
$$
A_{\lambda} = \prod_{k=1}^{r} \prod_{j=1}^{s} a^{j, k}_{\lambda(k)^{j}_{1} - \lambda(k)^{j}_{2}, \, \lambda(k)^{j}_{2} - \lambda(k)^{j}_{3}, \hdots, \, \lambda(k)^{j}_{m_j}}.
$$
Then the sum $\sum_{\lambda \in M^r} A_{\lambda} C_{\lambda}$ converges absolutely and
$$
\sum_{\lambda \in M^r} A_{\lambda} C_{\lambda}
= x_{\nu} u_{\nu} + \sum_{\mu \in M^r, \, \mu <_r \nu} x_{\mu} \sum_{\lambda \in M^r} A_{\lambda} \prod_{k=1}^{r} \prod_{j=1}^{s} p_j^{\sum_{i=1}^{m_j} \lambda(k)^j_i \mu(k)^j_i}
$$
for
$$
u_{\nu} = \prod_{k=1}^{r} \prod_{j=1}^{s} H_{m_j, p_j, \nu(k)^j}(p_j^{\nu(k)^j_1}, p_j^{\nu(k)^j_1 + \nu(k)^j_2}, \hdots, p_j^{\nu(k)^j_1 + \cdots + \nu(k)^j_{m_j}}) \neq 0.
$$
Therefore $x_{\nu}$ is determined by the numbers $\left\{ C_{\lambda} \right\}_{\lambda \in M^r}$ and $\left\{ x_{\pi} \right\}_{\pi <_r \nu}$.
\end{proof}

Let $p_1, \hdots, p_s$ be distinct primes, $G_j$ be a finite abelian $p_j$-group of type $\lambda_{j}$ for each $j$ and $G = G_1 \times \cdots \times G_s$. For a partition $\lambda$, let $\lambda'$ be the conjugate of $\lambda$. We define $m(G_j) :=  p_j^{\sum_{i} \frac{(\lambda_{j}')_i^2}{2}}$ and $m(G) := \prod_{j=1}^{s} m(G_j)$.
Now we prove the main result of this section. The proof is largely based on the proof of \cite[Theorem 8.3]{Woo17} so we omit some details as before.

\begin{theorem} \label{thm21c}
Let $a$ be a positive integer and $\calA$ be the set of finite abelian groups with exponent dividing $a$. Let $Y = (Y_1, \hdots, Y_r)$, $X_n = (X_{n, 1}, \hdots, X_{n, r})$ ($n = 1, 2, \hdots$) be $r$-tuples of random finitely generated abelian groups. Suppose that for every $G_1, \hdots, G_r \in \calA$, we have
\begin{equation} \label{eq21c}
\lim_{n \rightarrow \infty} \EE(\prod_{k=1}^{r} \# \Sur(X_{n, k}, G_k))
= \EE(\prod_{k=1}^{r} \# \Sur(Y_k, G_k))
= O \left ( \prod_{k=1}^{r} m(G_k) \right ).
\end{equation}
Then for every $H_1, \hdots, H_r \in \calA$, 
\begin{equation} \label{eq21d}
\lim_{n \rightarrow \infty} \PP \begin{pmatrix}
X_{n, k} \otimes \Z / a \Z \cong H_k \\
\text{for } 1 \leq k \leq r
\end{pmatrix}
= \PP \begin{pmatrix}
Y_k \otimes \Z / a \Z \cong H_k \\
\text{for } 1 \leq k \leq r
\end{pmatrix}.   
\end{equation}
\end{theorem}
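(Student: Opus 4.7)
The plan is to follow the structure of \cite[Theorem 8.3]{Woo17}, reducing Theorem \ref{thm21c} to the uniqueness statement of Lemma \ref{lem21b} via a tightness argument. First, since every homomorphism from $X_{n,k}$ to any $G_k \in \calA$ factors through $X_{n,k} \otimes \Z/a\Z$, one has $\# \Sur(X_{n,k}, G_k) = \# \Sur(X_{n,k} \otimes \Z/a\Z, G_k)$, so I may replace each $X_{n,k}$ and $Y_k$ by its tensor with $\Z/a\Z$ and assume all random groups take values in $\calA$. By M\"obius inversion on the subgroup lattice, mixed $\# \Sur$ moments and mixed $\# \Hom$ moments determine each other, and the hypothesis (\ref{eq21c}) translates to the analogous identity for $\# \Hom$, with the bound inflated by at most a constant factor depending on the subgroup lattices of the $G_k$ and hence still of the shape $O(\prod_k m(G_k))$.

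Writing $a = \prod_{j=1}^{s} p_j^{m_j}$, every $G \in \calA$ decomposes as $\prod_j G^{(j)}$ with $G^{(j)}$ a $p_j$-group of exponent dividing $p_j^{m_j}$; recording the conjugate partition of each $p_j$-type gives a bijection $\calA \leftrightarrow M$ under which $\# \Hom(G,H) = \prod_j p_j^{\sum_i \mu^j_i \lambda^j_i}$ when $\mu$ and $\lambda$ encode $G$ and $H$. Setting $x^n_\mu := \PP(X_n \text{ encodes } \mu)$ and $y_\mu := \PP(Y \text{ encodes } \mu)$ for $\mu \in M^r$, the Hom-moment identity becomes exactly the linear system (\ref{eq21b}), with $C_\lambda := \EE(\prod_k \# \Hom(Y_k, H_k))$ satisfying the quadratic-exponential bound required by Lemma \ref{lem21b}. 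To pass from moment convergence to convergence of the $x^n_\mu$, I would apply the hypothesis at $G_k = (\Z/p_j\Z)^t$ to bound $\EE(p_j^{t \cdot r_{p_j}(X_{n,k})})$ uniformly in $n$, which together with the bounded exponent controls the probability that any $X_{n,k}$ is large, giving uniform tightness on $\calA^r$. A diagonal extraction then produces, along any subsequence, a subsubsequence on which $x^n_\mu \to z_\mu$ for every $\mu$, with $(z_\mu)$ a probability measure, and dominated convergence shows that $(z_\mu)$ satisfies (\ref{eq21b}) with the same $C_\lambda$ as $(y_\mu)$. Lemma \ref{lem21b} then forces $z_\mu = y_\mu$, and since every subsequential limit equals $y$, the full sequence converges, which is (\ref{eq21d}).

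The main technical obstacle lies in the dominated-convergence step: after extracting a subsubsequential limit $z_\mu$ of the probabilities, one must dominate the partial sums $\sum_{\mu \in M^r} x^n_\mu \prod_k \prod_j p_j^{\sum_i \lambda(k)^j_i \mu(k)^j_i}$ uniformly in $n$ by a summable function of $\mu$, so that the contribution from groups with large encoding $\mu$ can be truncated before taking $n \to \infty$. The quantitative moment bound $O(\prod_k m(G_k))$ — not merely finiteness — is essential here, because applying the hypothesis at auxiliary groups $H_k$ slightly larger than $G_k$ supplies exactly the tail estimates needed to validate the interchange of limit and summation.
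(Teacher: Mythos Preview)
Your proposal is correct and follows essentially the same route as the paper: reduce to $\calA$, pass from $\#\Sur$ to $\#\Hom$, encode groups by conjugate partitions to land in the setup of Lemma~\ref{lem21b}, use the quantitative bound $O(\prod_k m(G_k))$ at auxiliary ``slightly larger'' groups to dominate the sums and justify the interchange of limit and summation, then invoke Lemma~\ref{lem21b} for uniqueness. The only organizational difference is that the paper first assumes the pointwise limits of the probabilities exist and at the end defers their existence to the subsequence argument of \cite[Theorem~8.3]{Woo17}, whereas you fold that step in via tightness and diagonal extraction; these are equivalent packagings of the same idea.
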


\begin{proof}
Assume that the limit $\lim_{n \rightarrow \infty} \PP (X_{n, k} \otimes \Z /a \Z \cong H_k \text{ for } 1 \leq k \leq r )$ converges for every $H_1, \hdots, H_r \in \calA$. Let $a = \prod_{j=1}^{s} p_j^{m_j}$ be the prime factorization of $a$. For each $G_k \in \calA$, we can choose $G'_k \in \calA$ such that the sum
$$
\sum_{B_k \in \calA} \frac{\# \Hom(B_k, G_k)}{\# \Hom(B_k, G'_k)}
$$
converges. (This can be found in the proof of \cite[Theorem 8.3]{Woo17}.) For all $n \geq 1$ and $H_1, \hdots, H_r \in \calA$, we have
\begin{align*}
& \PP(\forall k \;\; X_{n,k} \otimes \Z / a \Z \cong H_k) \prod_{k=1}^{r} \# \Hom(H_k, G'_k) \\
\leq \, & \sum_{B_1, \hdots, B_r \in \calA} \PP(\forall k \;\; X_{n,k} \otimes \Z / a \Z \cong B_k) \prod_{k=1}^{r} \# \Hom(B_k, G'_k) \\
= \, & \EE ( \prod_{k=1}^{r} \# \Hom(X_{n,k}, G'_k) ) \\
= \, & \sum_{K_1 \leq G_1'} \cdots \sum_{K_r \leq G_r'} \EE ( \prod_{k=1}^{r} \# \Sur(X_{n,k}, K_k) ) \\
\leq \, & D_{G_1, \hdots, G_r}
\end{align*}
for some constant $D_{G_1, \hdots, G_r}$ by the equation (\ref{eq21c}). Then,
\begin{equation*}
\PP(\forall k \;\; X_{n,k} \otimes \Z / a \Z \cong H_k) \prod_{k=1}^{r} \# \Hom(H_k, G_k) \leq D_{G_1, \hdots, G_r} \prod_{k=1}^{r} \frac{\# \Hom(H_k, G_k)}{\# \Hom(H_k, G'_k)}
\end{equation*}
and the sum
$$
\sum_{H_1, \hdots, H_r \in \calA} D_{G_1, \hdots, G_r} \prod_{k=1}^{r} \frac{\# \Hom(H_k, G_k)}{\# \Hom(H_k, G'_k)}
= D_{G_1, \hdots, G_r} \prod_{k=1}^{r} \left ( \sum_{H_k \in \calA} \frac{\# \Hom(H_k, G_k)}{\# \Hom(H_k, G'_k)} \right )
$$
converges. Thus we have
\begin{equation*}
\begin{split}
& \sum_{H_1, \hdots, H_r \in \calA} 
\lim_{n \rightarrow \infty} \PP(\forall k \;\; X_{n,k} \otimes \Z / a \Z \cong H_k) \prod_{k=1}^{r} \# \Hom(H_k, G_k) \\
= \, & \lim_{n \rightarrow \infty} \sum_{H_1, \hdots, H_r \in \calA} \PP(\forall k \;\; X_{n,k} \otimes \Z / a \Z \cong H_k) \prod_{k=1}^{r} \# \Hom(H_k, G_k) \\
= \, & \lim_{n \rightarrow \infty} \EE ( \prod_{k=1}^{r} \# \Hom(X_{n,k}, G_k) )
\end{split}    
\end{equation*}
by the Lebesgue dominated convergence theorem. Now the equation (\ref{eq21c}) implies that
\begin{equation*}
\begin{split}
& \lim_{n \rightarrow \infty} \EE ( \prod_{k=1}^{r} \# \Hom(X_{n,k}, G_k) ) \\
= \, & \sum_{\WG_1 \leq G_1} \cdots \sum_{\WG_r \leq G_r} \lim_{n \rightarrow \infty}  \EE ( \prod_{k=1}^{r} \# \Sur(X_{n,k}, \WG_k) ) \\
= \, & O \left ( \prod_{k=1}^{r} \left ( \sum_{\WG_k \leq G_k} m(\WG_k) \right ) \right ).
\end{split}    
\end{equation*}
Let $G_{k, j}$ be the $p_j$-Sylow subgroup of $G_k$ and $\lambda(k)^{j}$ be the conjugate of the type of $G_{k, j}$. 
Since the exponent of $G_{k, j}$ divides $p_j^{m_j}$, $\lambda(k)^{j}$ has at most $m_j$ parts. By \cite[Lemma 3.1]{Lee23b}, there is a constant $F>0$ such that
$$
\sum_{\WG_k \leq G_k} m(\WG_k) 
= \prod_{j=1}^{s} \left ( \sum_{\WG_{k, j} \leq G_{k, j}} m(\WG_{k, j}) \right )
\leq F^{\sum_{j=1}^{s} m_j} m(G_k)
$$
for each $k$. This implies that
$$
\prod_{k=1}^{r} \left ( \sum_{\WG_k \leq G_k} m(\WG_k) \right )
\leq \prod_{k=1}^{r} F^{ \sum_{j=1}^{s} m_j} m(G_k)
= \prod_{k=1}^{r} \prod_{j=1}^{s} F^{m_j} p_j^{\sum_{i} \frac{ (\lambda(k)^j_i)^2 }{2}}.
$$
Now let $M$ be as in Lemma \ref{lem21b}. By enlarging the constant $F$ if necessary, we obtain
$$
C_{\lambda} = \lim_{n \rightarrow \infty} \EE ( \prod_{k=1}^{r} \# \Hom(X_{n,k}, G_k) )
\leq \prod_{k=1}^{r} \prod_{j=1}^{s} F^{m_j} p_j^{\sum_{i} \frac{ (\lambda(k)^j_i)^2 }{2}}
$$
where $\lambda = (\lambda(1), \hdots, \lambda(r)) \in M^r$ and the type of $G_{k, j}$ is the conjugate of $\lambda(k)^j \in M_j$ for every $1 \leq k \leq r$ and $1 \leq j \leq s$. For $\mu = (\mu(1), \hdots, \mu(r)) \in M^r$ and $H_1, \hdots, H_r \in \calA$ such that the type of the $p_j$-Sylow subgroup of $H_k$ is the conjugate of $\mu(k)^j \in M_j$, define
\begin{equation*}
\begin{split}
x_{\mu} & = \lim_{n \rightarrow \infty} \PP (X_{n, k} \otimes \Z /a \Z \cong H_k \text{ for } 1 \leq k \leq r ), \\
y_{\mu} & = \PP (Y_k \otimes \Z /a \Z \cong H_k \text{ for } 1 \leq k \leq r ).
\end{split}    
\end{equation*}
Then the equation (\ref{eq21d}) follows from Lemma \ref{lem21b}. The existence of the limit $\lim_{n \rightarrow \infty} \PP (X_{n, k} \otimes \Z /a \Z \cong H_k \text{ for } 1 \leq k \leq r )$ can be proved as in \cite[Theorem 8.3]{Woo17}.
\end{proof}

\subsection{Joint distribution of finite $\Gamma$-groups} \label{Sub22}

Let $\Gamma$ be a finite group, $H_1, \hdots, H_r$ be $\Gamma$-groups and $H_i' = H_i \rtimes \Gamma$ for each $i$. Define $[H_i']$-groups and $[H_i']$-homomorphisms as in \cite[Definition 2.1 and 2.2]{Saw20}. Let $\Sur_{[H_i']}(G_1, G_2)$ be the set of surjective $[H_i']$-homomorphisms from $G_1$ to $G_2$. 
In this section, we prove Theorem \ref{thm1i}. 
First we prove the following two lemmas which generalize \cite[Lemma 2.8 and 2.9]{Saw20}.

\begin{lemma} \label{lem22a}
For each $1 \leq s \leq r$, let $G_{s1}, \hdots, G_{s m_s}$ be finite simple $[H_s']$-groups that are not pairwise $[H_s']$-isomorphic and $j_s$ be a positive integer. Let $\widetilde{\mu}, \, \widetilde{\mu_0}, \, \widetilde{\mu_1}, \hdots$ be measures on $\GG_1 \times \cdots \times \GG_r$ where $\GG_s$ is the set of isomorphism classes of $[H_s']$-groups of the form $\prod_{i=1}^{m_s} G_{si}^{e_{si}}$. 
Assume that $j_1 \leq m_1$ and for every $k_{s i_s} \in \Z_{\geq 0}$ ($1 \leq s \leq r$, $j_s \leq i_s \leq m_s$), we have
\begin{equation} \label{eq22c}
\sum_{\substack{e_{s i_s} \in \Z_{\geq 0} \text{ for} \\ 1 \leq s \leq r, \, j_s \leq i_s \leq m_s}}
S_{j_1, \hdots, j_r}(\bfe, \bfk) \widetilde{\mu} (G_{j_1, \hdots, j_r}^{\bfe}) = O(O(1)^{\sum_{s=1}^{r} \sum_{i_s=j_s}^{m_s} k_{s i_s}})
\end{equation}
and
\begin{equation} \label{eq22d}
\begin{split}
& \lim_{t \rightarrow \infty} \sum_{\substack{e_{s i_s} \in \Z_{\geq 0} \text{ for} \\ 1 \leq s \leq r, \, j_s \leq i_s \leq m_s}}  S_{j_1, \hdots, j_r}(\bfe, \bfk) \widetilde{\mu_t} (G_{j_1, \hdots, j_r}^{\bfe}) \\
= \, & \sum_{\substack{e_{s i_s} \in \Z_{\geq 0} \text{ for} \\ 1 \leq s \leq r, \, j_s \leq i_s \leq m_s}}  S_{j_1, \hdots, j_r}(\bfe, \bfk) \widetilde{\mu} (G_{j_1, \hdots, j_r}^{\bfe}),
\end{split}
\end{equation}
where
$$
S_{j_1, \hdots, j_r}(\bfe, \bfk) := \prod_{s=1}^{r} \# \Sur_{[H_s']}\left ( \prod_{i_s=j_s}^{m_s}G_{s i_s}^{e_{s i_s}}, \prod_{i_s=j_s}^{m_s}G_{s i_s}^{k_{s i_s}} \right )
$$
and
$$
G_{j_1, \hdots, j_r}^{\bfe} := \prod_{i_1=j_1}^{m_1} G_{1i_1}^{e_{1i_1}} \times \cdots \times \prod_{i_r=j_r}^{m_r} G_{ri_r}^{e_{ri_r}}.
$$
Then for every $k_{si_s} \in \Z_{\geq 0}$ ($1 \leq s \leq r$, $j_s' \leq i_s \leq m_s$) with 
$$
(j_1', \hdots, j_r') = (j_1+1, j_2, \hdots, j_r),
$$
we have
\begin{equation*}
\begin{split}
& \lim_{t \rightarrow \infty} 
\sum_{\substack{e_{s i_s} \in \Z_{\geq 0} \text{ for} \\ 1 \leq s \leq r, \, j_s' \leq i_s \leq m_s}} 
S_{j_1', \hdots, j_r'}(\bfe, \bfk) \widetilde{\mu_t} (G_{j_1', \hdots, j_r'}^{\bfe}) \\
= \, & \sum_{\substack{e_{s i_s} \in \Z_{\geq 0} \text{ for} \\ 1 \leq s \leq r, \, j_s' \leq i_s \leq m_s}} 
S_{j_1', \hdots, j_r'}(\bfe, \bfk) \widetilde{\mu} (G_{j_1', \hdots, j_r'}^{\bfe}).
\end{split}
\end{equation*}
\end{lemma}

\begin{proof}
Fix $k_{s i_s} \in \Z_{\geq 0}$ ($1 \leq s \leq r$, $j_s' \leq i_s \leq m_s$). For $e' \in \Z_{\geq 0}$, define
\begin{equation*}
\begin{split}
m(e') & = \sum_{\substack{e_{1 j_1} = e' \text{ and } e_{s i_s} \in \Z_{\geq 0} \\ \text{for }  1 \leq s \leq r, \, j_s' \leq i_s \leq m_s}}
S_{j_1', \hdots, j_r'}(\bfe, \bfk) \widetilde{\mu} (G_{j_1, \hdots, j_r}^{\bfe}), \\
m_t(e') & = \sum_{\substack{e_{1 j_1} = e' \text{ and } e_{s i_s} \in \Z_{\geq 0} \\ \text{for }  1 \leq s \leq r, \, j_s' \leq i_s \leq m_s}}
S_{j_1', \hdots, j_r'}(\bfe, \bfk) \widetilde{\mu_t} (G_{j_1, \hdots, j_r}^{\bfe}).
\end{split}    
\end{equation*}
Then for any $k_{1 j_1} \in \Z_{\geq 0}$, we have
\begin{equation*}
\begin{split}
& \sum_{e'=0}^{\infty} \# \Sur_{[H_1']} (G_{1 j_1}^{e'}, G_{1 j_1}^{k_{1 j_1}} )  m(e') \\
= \, & \sum_{\substack{e_{s i_s} \in \Z_{\geq 0} \text{ for} \\ 1 \leq s \leq r, \, j_s \leq i_s \leq m_s}}
S_{j_1, \hdots, j_r}(\bfe, \bfk) \widetilde{\mu} (G_{j_1, \hdots, j_r}^{\bfe}) \\
= \, & O(O(1)^{k_{1 j_1}})
\end{split}
\end{equation*}
by \cite[Lemma 2.5]{Saw20} and the equation (\ref{eq22c}). (Note that for fixed $k_{si_s}$ ($1 \leq s \leq r$, $j_s' \leq i_s \leq m_s$), $O(O(1)^{\sum_{s=1}^{r} \sum_{i_s=j_s}^{m_s} k_{s i_s}}) = O(O(1)^{k_{1 j_1}})$.) The same formula holds for $m_t$ and $\widetilde{\mu_t}$ for each $t$, so the equation (\ref{eq22d}) implies that
\small
$$
\lim_{t \rightarrow \infty} \sum_{e'=0}^{\infty} \# \Sur_{[H_1']} (G_{1 j_1}^{e'}, G_{1 j_1}^{k_{1 j_1}} )  m_t(e')
= \sum_{e'=0}^{\infty} \# \Sur_{[H_1']} (G_{1 j_1}^{e'}, G_{1 j_1}^{k_{1 j_1}} )  m(e')
= O(O(1)^{k_{1 j_1}}).
$$
\normalsize
Now \cite[Lemma 2.7]{Saw20} implies that $\lim_{t \rightarrow \infty} m_t(0) = m(0)$. This finishes the proof.
\end{proof}

\begin{lemma} \label{lem22b}
Let $H_s$, $G_{s i_s}$, $\widetilde{\mu}$ and $\widetilde{\mu_t}$ be as in Lemma \ref{lem22a}. Assume that the equations (\ref{eq22c}) and (\ref{eq22d}) hold for $j_1 = \hdots = j_r = 1$. Then we have
$$
\lim_{t \rightarrow \infty} \widetilde{\mu_t}(1, \hdots, 1) = \widetilde{\mu}(1, \hdots, 1).
$$
\end{lemma}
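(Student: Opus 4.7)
The plan is to prove Lemma~\ref{lem22b} by iterating Lemma~\ref{lem22a} until every simple factor has been stripped away. Starting from the hypotheses at $(j_1,\dots,j_r)=(1,\dots,1)$, I would produce a chain of tuples
$$
(1,\dots,1)\to(2,1,\dots,1)\to\cdots\to(m_1+1,1,\dots,1)\to\cdots\to(m_1+1,\dots,m_r+1),
$$
where each arrow increments a single coordinate $j_s$ by $1$ (while it is still $\leq m_s$) via one application of Lemma~\ref{lem22a}. At the terminal tuple the indexing set $\{(s,i_s):j_s\leq i_s\leq m_s\}$ is empty, so $\bfe$ and $\bfk$ both vanish, $G_{j_1,\dots,j_r}^{\bfe}$ is the trivial group in every coordinate, and the version of~(\ref{eq22d}) at that tuple collapses to $\lim_{t\to\infty}\widetilde{\mu_t}(1,\dots,1)=\widetilde{\mu}(1,\dots,1)$, which is the desired conclusion.

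Although Lemma~\ref{lem22a} is formulated as incrementing $j_1$, the whole setup is symmetric in $s\in\{1,\dots,r\}$, so the analogous statement for incrementing any $j_s$ with $j_s\leq m_s$ follows by relabeling. To chain the applications I would need to verify the hypotheses~(\ref{eq22c}) and~(\ref{eq22d}) of Lemma~\ref{lem22a} at each new tuple. The convergence hypothesis~(\ref{eq22d}) at $(j_1',\dots,j_r')$ is exactly the conclusion~(\ref{eq22e}) of the previous step, so it is automatic; the moment bound~(\ref{eq22c}) has to be propagated by a separate argument.

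For this propagation the key observation is that $\prod_{i_1=j_1}^{m_1}G_{1i_1}^{e_{1i_1}}$ is a product of pairwise non-isomorphic simple $[H_1']$-groups, so by \cite[Lemma~2.5]{Saw20} the surjection counts factor as
$$
\#\Sur_{[H_1']}\left(\prod_{i_1=j_1}^{m_1}G_{1i_1}^{e_{1i_1}},\prod_{i_1=j_1}^{m_1}G_{1i_1}^{k_{1i_1}}\right)=\prod_{i_1=j_1}^{m_1}\#\Sur_{[H_1']}(G_{1i_1}^{e_{1i_1}},G_{1i_1}^{k_{1i_1}}).
$$
Specializing~(\ref{eq22c}) at $k_{1j_1}=0$ makes the $i_1=j_1$ factor identically $1$, so the resulting sum over $\bfe$ dominates, by nonnegativity, its $e_{1j_1}=0$ slice, and that slice is exactly the left-hand side of~(\ref{eq22c}) at $(j_1+1,j_2,\dots,j_r)$. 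Consequently~(\ref{eq22c}) at the new tuple holds with the same $O(O(1)^{\cdots})$ constant, and a symmetric version handles every subsequent increment of $j_s$ for $s\geq 2$.

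I expect the only real obstacle to be bookkeeping: carefully tracking the indexing conventions, the interpretation of $G_{j_1,\dots,j_r}^{\bfe}$ once some coordinates have been pushed past $m_s$, and the propagation of the exponential bound through the $\sum_{s=1}^{r}m_s$ successive applications. No new ideas beyond Lemma~\ref{lem22a} itself and the marginalization--symmetry argument above should be needed.
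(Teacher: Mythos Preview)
Your proposal is correct and follows exactly the paper's approach: the paper's proof is a single sentence stating that repeated application of Lemma~\ref{lem22a} pushes equation~(\ref{eq22d}) to the terminal tuple $(m_1+1,\dots,m_r+1)$. Your write-up is in fact more complete than the paper's, since you explicitly verify the propagation of the moment bound~(\ref{eq22c}) (via the $k_{1j_1}=0$ specialization and the $e_{1j_1}=0$ slice), which the paper leaves entirely to the reader.
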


\begin{proof}
By a repeated use of Lemma \ref{lem22a}, one can show that the equation (\ref{eq22d}) holds for $(j_1, \hdots, j_r) = (m_1+1, \hdots, m_r+1)$. 
\end{proof}

For an $[H_s']$-group $T$, let $Q_s(T)$ be the quotient of $T$ by the intersection of its maximal proper $H_s'$-invariant normal subgroups equipped with a natural $[H_s']$-group structure. 
Let $\CC_1, \hdots, \CC_r$ be finite sets of finite $\Gamma$-groups such that $(\left| \CC_s \right|, \left| \Gamma \right|) = 1$ for each $s$.

\begin{lemma} \label{lem22c}
(\cite[Lemma 2.12]{Saw20}) There exist finitely many finite simple $[H_s']$-groups $G_i$ (up to isomorphism) such that there exists an extension $1 \rightarrow G_i \rightarrow G \rightarrow H_s \rightarrow 1$ of $\Gamma$-groups compatible with the actions of $H_s$ and $\Gamma$ on $G_i$ by outer automorphisms, where $G$ is a level-$\CC_s$ $\Gamma$-group.
\end{lemma}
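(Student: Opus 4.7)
The plan is to combine the closure properties of the class $\overline{\CC_s}$ with the classification of finite groups that are simple as $[H_s']$-groups. First I would observe that, since $\overline{\CC_s}$ is closed under subgroups, the assumption that $G$ is level-$\CC_s$ forces $G_i$ itself to be level-$\CC_s$. Jordan--H\"older, together with the fact that the composition factors of a subgroup, quotient, or finite direct product are among those of the original groups, then confines the ordinary composition factors of $G_i$ to the finite list of composition factors of elements of the finite set $\CC_s$.

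Next I would apply the classification of finite simple $[H_s']$-groups. Any characteristic subgroup of a finite group is invariant under every automorphism, and hence under any representative of any outer automorphism, so it is automatically $H_s'$-invariant; thus a finite simple $[H_s']$-group is characteristically simple as an abstract group. Consequently $G_i \cong T^n$ for some finite simple group $T$, with $H_s'$ permuting the $n$ factors transitively in a manner compatible with its prescribed outer action on the diagonal copy of $T$. By the previous step, $T$ must lie in a finite list.

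It then remains to bound $n$. In the non-abelian case the transitive action of $H_s'$ on the $n$ factors realizes a transitive homomorphic image of $H_s'$ inside the symmetric group on $n$ letters, so $n$ divides $|H_s'| = |H_s| \cdot |\Gamma|$, a fixed finite quantity. In the abelian case $T \cong \Z/p$ for some prime $p$ drawn from a finite list (the abelian composition factors of $\CC_s$), and $G_i \cong \F_p^n$ becomes an irreducible $\F_p[H_s']$-module; since $H_s'$ is finite, there are only finitely many such modules up to isomorphism over each such $\F_p$. Enumerating $(T, n)$ together with the residual permutation-and-outer-action data therefore yields only finitely many possibilities for $G_i$ up to $[H_s']$-isomorphism.

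The main technical point is the structure theorem for $[H_s']$-simple groups itself: because the $H_s'$-action on $G_i$ is specified only up to inner automorphisms, the transitivity of the permutation of simple factors and the compatibility of the action on each factor must be formulated intrinsically in the outer-automorphism framework, and this is where I would rely on the language set up in \cite{Saw20}. Once this classification is in hand, the bounds on $T$ and $n$ above are elementary and the finiteness assertion follows immediately.
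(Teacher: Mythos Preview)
The paper does not give its own proof of this lemma: it is stated as a direct citation of \cite[Lemma 2.12]{Saw20} and used as a black box. So there is no ``paper's proof'' to compare against, and your proposal is effectively supplying the argument that the paper chose to outsource.

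As a self-contained argument your outline is sound and tracks the proof in \cite{Saw20} closely. The main steps---(i) $G_i \in \overline{\CC_s}$ because $\overline{\CC_s}$ is closed under subgroups, hence the abstract composition factors of $G_i$ lie in a finite list; (ii) characteristic subgroups are $H_s'$-invariant regardless of the choice of lift from $\operatorname{Out}(G_i)$, so $[H_s']$-simplicity implies characteristic simplicity and $G_i \cong T^n$; (iii) bounding $n$ via transitivity of the induced permutation action when $T$ is non-abelian, and via finiteness of irreducible $\F_p[H_s']$-modules when $T \cong \Z/p$---are all correct. One small point worth making explicit in step (iii): the permutation action of $H_s'$ on the $n$ simple factors is well-defined precisely because, for non-abelian $T$, the projection $\Aut(T^n) \to S_n$ factors through $\operatorname{Out}(T^n)$; this is implicit in your remark about relying on the outer-automorphism framework from \cite{Saw20}, but it is the one place where the $[H_s']$ (as opposed to genuine $H_s'$-action) formalism requires care. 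Finally, to conclude finiteness of $[H_s']$-isomorphism classes rather than just abstract isomorphism classes, you can simply note that each candidate $G_i$ has finite outer automorphism group, so there are only finitely many homomorphisms $H_s' \to \operatorname{Out}(G_i)$.
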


For each $1 \leq s \leq r$, let $G_{s1}, \hdots, G_{s m_s}$ be pairwise non-isomorphic representatives of the isomorphism classes of $G_i$ discussed in Lemma \ref{lem22c}. For a finite level-$\CC_s$ $\Gamma$-group $G$ and $\pi \in \Hom(G, H_s)$, we have an isomorphism of $[H_s']$-groups
\begin{equation*}
Q_s(\ker \pi) \cong \prod_{i_s=1}^{m_s} G_{s i_s}^{e_{s i_s}}
\end{equation*}
for some $e_{s 1}, \hdots, e_{s m_s} \in \Z_{\geq 0}$ by \cite[Lemma 2.14]{Saw20}. Let $\mfm_s$ be the set of isomorphism classes of finite level-$\CC_s$ $\Gamma$-groups and $\mfm = \mfm_1 \times \cdots \times \mfm_r$. Now we can define a localized measure $\mu^{H_1, \hdots, H_r}$ for each measure $\mu$ on $\mfm$.

\begin{definition} \label{def22d}
Let $\mu$ be a measure on $\mfm$ and $(H_1, \hdots, H_r) \in \mfm$. Define a measure $\mu^{\mathbf{H}} := \mu^{H_1, \hdots, H_r}$ on the set of isomorphism classes of $r$-tuples of the form
$$
\prod_{i_1=1}^{m_1} G_{1 i_1}^{e_{1 i_1}} \times \cdots \times \prod_{i_r=1}^{m_r} G_{r i_r}^{e_{r i_r}}
$$
by
\begin{equation*}
\begin{split}
& \mu^{\mathbf{H}}(E_1, \hdots, E_r) \\
:= & \, \int \prod_{s=1}^{r} \left| \left\{ \pi_s \in \Sur_{\Gamma}(X_s, H_s) \mid Q_s(\ker \pi_s) \cong E_s \right\} \right| d \mu (X_1, \hdots, X_r).
\end{split}    
\end{equation*}
\end{definition}

The following two lemmas extend \cite[Lemma 2.16 and 2.19]{Saw20}.

\begin{lemma} \label{lem22e}
$$
\mu^{\mathbf{H}}(1, \hdots, 1) = \left ( \prod_{s=1}^{r} \# \Aut(H_s) \right ) \mu(H_1, \hdots, H_r).
$$
\end{lemma}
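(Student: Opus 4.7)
The plan is to unpack the definition of $\mu^{\mathbf{H}}(1, \ldots, 1)$ and observe that asking $Q_s(\ker \pi_s) \cong 1$ forces $\pi_s$ to be an isomorphism of $\Gamma$-groups, so the integrand collapses to a product of automorphism counts supported on the single point $(H_1, \ldots, H_r) \in \mfm$.

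The only real content is the elementary claim: for a finite $[H_s']$-group $K$, we have $Q_s(K) \cong 1$ if and only if $K = 1$. First I would verify this. The ``if'' direction is immediate from the definition of $Q_s$. For the converse, suppose $K \neq 1$; since $K$ is finite it possesses at least one maximal proper $H_s'$-invariant normal subgroup $N$, and the intersection defining $Q_s$ is contained in $N \subsetneq K$, so $Q_s(K) \neq 1$.

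Applying this with $K = \ker \pi_s$, for each fixed $(X_1, \ldots, X_r)$ the factor indexed by $s$ in the integrand of Definition \ref{def22d} becomes
\[
\bigl|\{\pi_s \in \Sur_{\Gamma}(X_s, H_s) \mid \ker \pi_s = 1\}\bigr| = \bigl|\mathrm{Iso}_{\Gamma}(X_s, H_s)\bigr|,
\]
which vanishes unless $X_s \cong H_s$ as $\Gamma$-groups, in which case it equals $\#\Aut(H_s)$ (taken in the category of $\Gamma$-groups, consistent with the convention used in $\Sur_{\Gamma}$). Hence the integrand equals $\bigl(\prod_{s=1}^{r} \#\Aut(H_s)\bigr)\cdot \mathbf{1}\bigl[(X_1,\ldots,X_r) = (H_1,\ldots,H_r)\bigr]$, and integrating against $\mu$ yields
\[
\mu^{\mathbf{H}}(1, \ldots, 1) = \Bigl(\prod_{s=1}^{r} \#\Aut(H_s)\Bigr)\, \mu(H_1, \ldots, H_r),
\]
as claimed. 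There is no substantive obstacle here beyond the short Frattini-type argument for $Q_s$; the rest is bookkeeping.
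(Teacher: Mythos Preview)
Your proof is correct and follows exactly the approach implicit in the paper, which omits the argument as a straightforward extension of \cite[Lemma 2.16]{Saw20} to $r$-tuples. The key observation that $Q_s(K)=1$ forces $K=1$ for finite $K$, and hence that the integrand reduces to $\prod_s \#\Aut_\Gamma(H_s)$ on the single point $(H_1,\ldots,H_r)$, is precisely Sawin's argument applied coordinatewise.
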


\begin{lemma} \label{lem22f}
Let $\mu$ be a measure on $\mfm$ and $F_s = \prod_{i_s=1}^{m_s} G_{s i_s}^{k_{s i_s}}$ for each $s$. Then we have
\begin{align*}
& \int \prod_{s=1}^{r} \# \Sur_{[H_s']}(E_s, F_s) d\mu^{\mathbf{H}}(E_1, \hdots, E_r) \\
= \, & \sum_{\substack{1 \rightarrow F_s \rightarrow G_s \rightarrow H_s \rightarrow 1 \\ \text{for } 1 \leq s \leq r}}  \int \prod_{s=1}^{r} \frac{\# \Sur_{\Gamma}(X_s, G_s)}{\# \Aut_{F_s, H_s}(G_s)} d\mu(X_1, \hdots, X_r),
\end{align*}
where the sum is over exact sequences of $\Gamma$-groups compatible with the actions of $H_s$ and $\Gamma$ on $F_s$ by outer automorphisms for each $1 \leq s \leq r$.
\end{lemma}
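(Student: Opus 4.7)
The plan is to reduce this $r$-tuple statement to the single-factor version, namely \cite[Lemma 2.19]{Saw20}, by expanding the definition of $\mu^{\mathbf{H}}$ and interchanging sums.

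First I would unfold Definition \ref{def22d}. Writing the left-hand side as a sum over isomorphism classes $(E_1,\ldots,E_r)$ weighted by $\mu^{\mathbf{H}}(E_1,\ldots,E_r)$ and substituting the defining integral, I obtain
\begin{equation*}
\int \prod_{s=1}^{r} \# \Sur_{[H_s']}(E_s, F_s) \, d\mu^{\mathbf{H}}(E_1, \cdots, E_r)
= \int \prod_{s=1}^{r} \Bigl( \sum_{\pi_s \in \Sur_{\Gamma}(X_s, H_s)} \# \Sur_{[H_s']}(Q_s(\ker \pi_s), F_s) \Bigr) d\mu(X_1, \cdots, X_r),
\end{equation*}
after collecting, for each $s$, the surjections $\pi_s$ according to the isomorphism class of $Q_s(\ker \pi_s)$. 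This step is just Fubini together with the observation that summing $\# \Sur_{[H_s']}(E_s, F_s)$ over $E_s$ weighted by the number of $\pi_s$ with $Q_s(\ker \pi_s) \cong E_s$ is the same as summing $\# \Sur_{[H_s']}(Q_s(\ker \pi_s), F_s)$ over all $\pi_s \in \Sur_{\Gamma}(X_s, H_s)$.

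Next I would apply \cite[Lemma 2.19]{Saw20} factor by factor. That lemma identifies, for a single $X_s$, the inner sum as
\begin{equation*}
\sum_{\pi_s \in \Sur_{\Gamma}(X_s, H_s)} \# \Sur_{[H_s']}(Q_s(\ker \pi_s), F_s)
= \sum_{1 \to F_s \to G_s \to H_s \to 1} \frac{\# \Sur_{\Gamma}(X_s, G_s)}{\# \Aut_{F_s, H_s}(G_s)},
\end{equation*}
the sum ranging over exact sequences of $\Gamma$-groups compatible with the outer actions of $H_s$ and $\Gamma$ on $F_s$. Taking the product over $s$ turns the product of sums-over-extensions into a single sum over $r$-tuples of extensions, so
\begin{equation*}
\prod_{s=1}^{r} \Bigl( \sum_{\pi_s \in \Sur_{\Gamma}(X_s, H_s)} \# \Sur_{[H_s']}(Q_s(\ker \pi_s), F_s) \Bigr)
= \sum_{\substack{1 \to F_s \to G_s \to H_s \to 1 \\ \text{for } 1 \leq s \leq r}} \prod_{s=1}^{r} \frac{\# \Sur_{\Gamma}(X_s, G_s)}{\# \Aut_{F_s, H_s}(G_s)}.
\end{equation*}
Integrating against $\mu$ and interchanging the (finite or absolutely convergent) sum over extensions with the integral gives the claimed identity.

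The only potential obstacle is justifying the two interchanges of sum and integral: one is the passage from the $(E_1,\ldots,E_r)$ sum to the sum over surjections $\pi_s$, the other is pulling the sum over extensions out of the integral. Both are handled exactly as in the proof of \cite[Lemma 2.19]{Saw20}: each $X_s$ in the support of $\mu$ admits only finitely many surjections onto the finite group $H_s$, and for each such $\pi_s$ the set of isomorphism classes $E_s$ with $\Sur_{[H_s']}(E_s,F_s)\neq 0$ is finite (in fact, $Q_s(\ker \pi_s)$ is determined by $X_s$ and $\pi_s$). Hence everything is a finite rearrangement pointwise in $(X_1,\ldots,X_r)$, and no subtle convergence question arises.
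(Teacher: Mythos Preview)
Your proposal is correct and follows essentially the same route as the paper: unfold the definition of $\mu^{\mathbf{H}}$, apply the single-factor identity from Sawin to each factor, and interchange the finite sum over extensions with the integral. The only cosmetic difference is that the paper cites \cite[Lemma 2.17]{Saw20} for the pointwise identity and \cite[Lemma 2.18]{Saw20} for the finiteness of the set of extensions, whereas you invoke \cite[Lemma 2.19]{Saw20} (the integrated single-factor statement) directly; since the latter is proved via the former, the arguments are equivalent.
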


\begin{proof}
By \cite[Lemma 2.17 and 2.18]{Saw20}, we have
\begin{align*}
& \int \prod_{s=1}^{r} \# \Sur_{[H_s']}(E_s, F_s) d\mu^{\mathbf{H}}(E_1, \hdots, E_r) \\
= \, & \int \prod_{s=1}^{r} \left ( \sum_{\pi_s \in \Sur(X_s, H_s)} \# \Sur_{[H_s']}(Q_s(\ker \pi_s), F_s) \right ) d \mu(X_1, \hdots, X_r) \\
= \, & \int \prod_{s=1}^{r} \left ( \sum_{1 \rightarrow F_s \rightarrow G_s \rightarrow H_s \rightarrow 1} \frac{\# \Sur_{\Gamma}(X_s, G_s)}{\# \Aut_{F_s, H_s}(G_s)} \right ) d \mu(X_1, \hdots, X_r)  \\
= \, & \sum_{\substack{1 \rightarrow F_s \rightarrow G_s \rightarrow H_s \rightarrow 1 \\ \text{for } 1 \leq s \leq r}}  \int \prod_{s=1}^{r} \frac{\# \Sur_{\Gamma}(X_s, G_s)}{\# \Aut_{F_s, H_s}(G_s)} d\mu(X_1, \hdots, X_r). 
\end{align*}
\end{proof}

The following theorem is a generalization of \cite[Theorem 1.2]{Saw20}.

\begin{theorem} \label{thm22g}
Let $\mu, \, \mu_0, \, \mu_1, \hdots$ be measures on $\mfm$. Assume that for every $(G_1, \hdots, G_r) \in \mfm$, we have
\begin{equation} \label{eq22f}
\begin{split}
& \lim_{t \rightarrow \infty} \int \prod_{s=1}^{r} \# \Sur_{\Gamma} (X_s, G_s) d \mu_t(X_1, \hdots, X_r) \\
= & \, \int \prod_{s=1}^{r} \# \Sur_{\Gamma} (X_s, G_s) d \mu(X_1, \hdots, X_r)
\end{split}
\end{equation}
and
\begin{equation} \label{eq22g}
\int \prod_{s=1}^{r} \# \Sur_{\Gamma} (X_s, G_s) d \mu(X_1, \hdots, X_r) = O \left ( ( \prod_{s=1}^{r} \left| G_s \right| )^{O(1)} \right ).
\end{equation}
Then for all $(H_1, \hdots, H_r) \in \mfm$, we have
\begin{equation} \label{eq22h}
\lim_{t \rightarrow \infty} \mu_t(H_1, \hdots, H_r) 
= \mu(H_1, \hdots, H_r).
\end{equation}
\end{theorem}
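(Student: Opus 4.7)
The plan is to mirror Sawin's single-tuple argument for \cite[Theorem 1.2]{Saw20} using the $r$-tuple machinery developed in Lemmas \ref{lem22a}, \ref{lem22b}, \ref{lem22e}, and \ref{lem22f}. Fix $(H_1, \ldots, H_r) \in \mfm$. By Lemma \ref{lem22e}, showing the equation (\ref{eq22h}) for this tuple reduces to proving
\[
\lim_{t \to \infty} \mu_t^{\mathbf{H}}(1, \ldots, 1) = \mu^{\mathbf{H}}(1, \ldots, 1),
\]
where the localized measures $\mu_t^{\mathbf{H}}$ and $\mu^{\mathbf{H}}$ are as in Definition \ref{def22d}, supported on the product of sets of $[H_s']$-groups of the form $\prod_{i_s=1}^{m_s} G_{s i_s}^{e_{s i_s}}$. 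Thus it suffices to verify the hypotheses of Lemma \ref{lem22b} with $\widetilde{\mu_t} = \mu_t^{\mathbf{H}}$ and $\widetilde{\mu} = \mu^{\mathbf{H}}$, namely the versions of the equations (\ref{eq22c}) and (\ref{eq22d}) with $j_1 = \cdots = j_r = 1$.

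To check these hypotheses, I would apply Lemma \ref{lem22f}: for each choice of exponents $k_{s i_s} \in \Z_{\geq 0}$ and $F_s := \prod_{i_s=1}^{m_s} G_{s i_s}^{k_{s i_s}}$, the $[H_s']$-mixed moment of $\mu^{\mathbf{H}}$ equals
\[
\sum_{\substack{1 \to F_s \to G_s \to H_s \to 1 \\ \text{for } 1 \le s \le r}} \int \prod_{s=1}^{r} \frac{\# \Sur_{\Gamma}(X_s, G_s)}{\# \Aut_{F_s, H_s}(G_s)} \, d\mu(X_1, \ldots, X_r),
\]
and analogously for each $\mu_t^{\mathbf{H}}$. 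For each fixed tuple of $F_s$ and $H_s$, the set of such extensions is \emph{finite} (they are classified by cohomology classes with finite-order coefficients), so one may interchange the finite sum over extensions with the limit $t \to \infty$, and the termwise convergence is precisely the hypothesis (\ref{eq22f}). This gives the equation (\ref{eq22d}) for $(j_1, \ldots, j_r) = (1, \ldots, 1)$.

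It remains to establish the polynomial bound (\ref{eq22c}). By the hypothesis (\ref{eq22g}), each integral in the displayed sum above is $O((\prod_{s=1}^{r} |G_s|)^{O(1)})$, and since $|G_s| = |F_s| \cdot |H_s|$ with $|F_s| = \prod_{i_s=1}^{m_s} |G_{s i_s}|^{k_{s i_s}}$, we get a bound of the shape $O(O(1)^{\sum_{s, i_s} k_{s i_s}})$ for each summand. The number of extensions $1 \to F_s \to G_s \to H_s \to 1$ is bounded by $|H_s|^{\log_2|F_s|} \cdot |\mathrm{H}^2(H_s, F_s)|$-type quantities that are likewise exponential in $\sum_{i_s} k_{s i_s}$, and taking the product over $s$ still yields a bound of the desired form $O(O(1)^{\sum_{s, i_s} k_{s i_s}})$. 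Thus (\ref{eq22c}) holds, Lemma \ref{lem22b} applies, and the theorem follows.

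The main obstacle will be the bookkeeping for the bound on the number of extensions and the verification that the aggregate sum $\sum_{G_s} \frac{|H_s|}{|\Aut_{F_s, H_s}(G_s)|} \cdot |G_s|^{O(1)}$ remains exponential rather than super-exponential in $\sum_{i_s} k_{s i_s}$; this is exactly the content of the polynomial bound argument in the proof of \cite[Theorem 1.2]{Saw20}, and carries over variable-by-variable since our hypothesis (\ref{eq22g}) is itself a product bound. Once this is in place, the chain of implications (Lemma \ref{lem22f} $\Rightarrow$ hypotheses of Lemma \ref{lem22b} $\Rightarrow$ convergence of $\mu_t^{\mathbf{H}}(1, \ldots, 1)$ $\Rightarrow$ convergence of $\mu_t(H_1, \ldots, H_r)$ via Lemma \ref{lem22e}) completes the proof.
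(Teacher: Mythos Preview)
Your proposal is correct and follows essentially the same route as the paper's proof: reduce via Lemma~\ref{lem22e} to convergence of $\mu_t^{\mathbf{H}}(1,\ldots,1)$, translate the $[H_s']$-moments through Lemma~\ref{lem22f} into finite sums over extensions, use hypothesis~(\ref{eq22f}) for the termwise limit and hypothesis~(\ref{eq22g}) together with the exponential bound on the number of extensions for the growth estimate, and then invoke Lemma~\ref{lem22b}. The only cosmetic difference is that where you appeal to cohomological finiteness and an $|H_s|^{\log_2|F_s|}\cdot|\mathrm{H}^2(H_s,F_s)|$-type count, the paper cites \cite[Lemma~2.18]{Saw20} directly for both the finiteness and the $O(O(1)^{\sum k_{s i_s}})$ bound on the number of extensions.
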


\begin{proof}
Fix $(H_1, \hdots, H_r) \in \mfm$ and let $F_s = \prod_{i_s=1}^{m_s} G_{s i_s}^{k_{s i_s}}$ for $1 \leq s \leq r$. Then we have
\begin{align*}
& \lim_{t \rightarrow \infty} \int \prod_{s=1}^{r} \# \Sur_{[H_s']}(E_s, F_s) d\mu_t^{\mathbf{H}}(E_1, \hdots, E_r) \\
= \, & \lim_{t \rightarrow \infty} \sum_{\substack{1 \rightarrow F_s \rightarrow G_s \rightarrow H_s \rightarrow 1 \\ \text{for } 1 \leq s \leq r}}  \int \prod_{s=1}^{r} \frac{\# \Sur_{\Gamma}(X_s, G_s)}{\# \Aut_{F_s, H_s}(G_s)} d\mu_t(X_1, \hdots, X_r) \\
= \, & \sum_{\substack{1 \rightarrow F_s \rightarrow G_s \rightarrow H_s \rightarrow 1 \\ \text{for } 1 \leq s \leq r}} \left ( \lim_{t \rightarrow \infty} \int \prod_{s=1}^{r} \frac{\# \Sur_{\Gamma}(X_s, G_s) d \mu_t(X_1, \hdots, X_r)}{\# \Aut_{F_s, H_s}(G_s)} \right ) \\
= \, & \sum_{\substack{1 \rightarrow F_s \rightarrow G_s \rightarrow H_s \rightarrow 1 \\ \text{for } 1 \leq s \leq r}} \left ( \int \prod_{s=1}^{r} \frac{\# \Sur_{\Gamma}(X_s, G_s) d \mu(X_1, \hdots, X_r)}{\# \Aut_{F_s, H_s}(G_s)} \right ) \\
= \, & \int \prod_{s=1}^{r} \# \Sur_{[H_s']}(E_s, F_s) d\mu^{\mathbf{H}}(E_1, \hdots, E_r),
\end{align*}
where the first and last equalities are due to Lemma \ref{lem22f}, the second equality is due to \cite[Lemma 2.18]{Saw20} and the third equality follows from Assumption (\ref{eq22f}).
By Lemma \ref{lem22f} and the assumption (\ref{eq22g}), we have
\begin{equation*}
\begin{split}
& \int \prod_{s=1}^{r} \# \Sur_{[H_s']}(E_s, F_s) d\mu^{\mathbf{H}}(E_1, \hdots, E_r) \\
= \, & \sum_{\substack{1 \rightarrow F_s \rightarrow G_s \rightarrow H_s \rightarrow 1 \\ \text{for } 1 \leq s \leq r}}  \int \prod_{s=1}^{r} \frac{\# \Sur_{\Gamma}(X_s, G_s)}{\# \Aut_{F_s, H_s}(G_s)} d\mu(X_1, \hdots, X_r) \\
= \, & \prod_{s=1}^{r} \left ( \sum_{1 \rightarrow F_s \rightarrow G_s \rightarrow H_s \rightarrow 1} O( \left| G_s \right|)^{O(1)} \right ).
\end{split}    
\end{equation*}
Since $G_{s1}, \hdots, G_{s m_s}$ are determined by $H_s$, we have 
$$
\left| G_s \right| = \left| F_s \right| \left| H_s \right| = O(O(1)^{\sum_{i_s=1}^{m_s} k_{s i_s}})
$$
for a fixed $H_s$. By \cite[Lemma 2.18]{Saw20}, the number of exact sequences $1 \rightarrow F_s \rightarrow G_s \rightarrow H_s \rightarrow 1$ (compatible with the actions) is $O(O(1)^{\sum_{i_s=1}^{m_s} k_{s i_s}})$. Thus we have
$$
\int \prod_{s=1}^{r} \# \Sur_{[H_s']}(E_s, F_s) d\mu^{\mathbf{H}}(E_1, \hdots, E_r) = O(O(1)^{\sum_{s=1}^{r} \sum_{i_s=1}^{m_s} k_{s i_s} }).
$$
Now we can apply Lemma \ref{lem22b} to $\widetilde{\mu} = \mu^{\mathbf{H}}$ and $\widetilde{\mu_t} = \mu_t^{\mathbf{H}}$. We obtain
$$
\lim_{t \rightarrow \infty} \mu_t^{\mathbf{H}}(1, \hdots, 1) = \mu^{\mathbf{H}}(1, \hdots, 1),
$$
which implies the equation (\ref{eq22h}) by Lemma \ref{lem22e}.
\end{proof}

Recall that $\GG_{\Gamma}$ denotes the set of isomorphism classes of $\Gamma$-groups with finitely many surjections to any finite group, and all continuous finite quotients of order relatively prime to $\left| \Gamma \right|$. 
For a positive integer $m$, let $m'$-$\Gamma$-\textit{group} be a $\Gamma$-group such that every finite quotient has order relatively prime to $m$.
We endow $\GG_{\Gamma}$ with a topology for which a basis of open sets is given by $U_{\CC, H} := \left\{ G \in \GG_{\Gamma} \mid G^{\CC} \cong H \right\}$ for a finite set of finite $\left| \Gamma \right|'$-$\Gamma$-groups $\CC$ and a finite $\left| \Gamma \right|'$-$\Gamma$-group $H$. 
The next lemma shows that if $\Gamma$ is a trivial group, then $\GG_{\Gamma}$ is homeomorphic to $\GG$ defined just before Theorem \ref{thm1h}. 

\begin{lemma} \label{lem22h}
A profinite group $G$ is an element of $\GG$ if and only if $\Sur(G, H)$ is finite for every finite group $H$. 
\end{lemma}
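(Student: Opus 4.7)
The plan is to prove the two implications of the lemma separately. For the forward direction, suppose $G \in \GG$ and let $H$ be a finite group. Setting $\CC = \{H\}$, the group $G^{\{H\}}$ is finite by assumption. Since $H \in \overline{\{H\}}$, every continuous surjection $G \twoheadrightarrow H$ factors uniquely through the canonical map $G \to G^{\{H\}}$ (by density of its image), so $\Sur(G, H)$ is in bijection with $\Sur(G^{\{H\}}, H)$, which is visibly finite.

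For the converse, suppose $\Sur(G, H) < \infty$ for every finite group $H$, and fix a finite set $\CC$ of finite groups. I would first reduce to the case $\CC = \{H\}$ for a single group $H := \prod_{H_i \in \CC} H_i$, so that $\overline{\{H\}} = \overline{\CC}$. The collection $\mathcal{N} := \{N \trianglelefteq_c G : G/N \in \overline{\CC}\}$ is closed under finite intersections (since $G/(N_1 \cap N_2)$ embeds into $G/N_1 \times G/N_2 \in \overline{\CC}$), so $G^{\CC}$ is finite if and only if $\mathcal{N}$ is finite: if $\mathcal{N}$ is finite then $\bigcap_{N \in \mathcal{N}} N$ is itself the smallest element of $\mathcal{N}$, and $G^{\CC} = G/\bigcap N$. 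By hypothesis, for each fixed finite $Q \in \overline{\CC}$ there are only $|\Sur(G, Q)|/|\Aut(Q)| < \infty$ elements of $\mathcal{N}$ with $G/N \cong Q$, so it suffices to show that only finitely many isomorphism classes of finite groups in $\overline{\CC}$ arise as quotients of $G$. The class $\overline{\CC}$ has bounded exponent $e := \exp(H)$ (since exponent is preserved under subgroups, quotients, and direct products), and by Jordan--H\"older its finite simple members form a finite set $\{S_1, \ldots, S_m\}$ of composition factors of $H$. A standard structural result bounds the minimum number of generators $d(Q)$ of any finite group $Q$ in terms of the surjection counts to its simple quotients, so the inequalities $|\Sur(Q, S_j)| \leq |\Sur(G, S_j)| < \infty$ yield a uniform bound $d(Q) \leq d_0 < \infty$ for every finite quotient $Q$ of $G$ in $\overline{\CC}$. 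Zelmanov's positive solution to the restricted Burnside problem then gives $|Q| \leq B(d_0, e) < \infty$, leaving only finitely many isomorphism classes, as required.

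The main obstacle is the bound on $d(Q)$ by surjection counts to the simple quotients of $Q$; this is a non-elementary input from the structure theory of finite groups, in the spirit of Lubotzky's characterization of positively finitely generated profinite groups. Given this bound, the invocation of Zelmanov's theorem and the subsequent counting argument are routine.
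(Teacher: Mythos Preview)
Your forward direction is correct and essentially identical to the paper's (the paper takes $\CC$ to be all groups of order at most $|H|$ rather than $\CC=\{H\}$, but the mechanism is the same).

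The converse has a genuine gap: the ``standard structural result'' you invoke --- that $d(Q)$ is bounded by a function of $\max_j |\Sur(Q,S_j)|$ over the simple composition factors $S_j$ --- is false. Fix primes $q<p$ with $q\mid p-1$ and let $Q_n=(\Z/p\Z)^n\rtimes\Z/q\Z$, where a generator of $\Z/q\Z$ acts on each $\Z/p\Z$ factor as multiplication by a fixed primitive $q$-th root of unity $\zeta\in\F_p^\times$. Since $\zeta-1\in\F_p^\times$ one has $[Q_n,Q_n]=(\Z/p\Z)^n$, so $Q_n^{\mathrm{ab}}\cong\Z/q\Z$; hence $|\Sur(Q_n,\Z/p\Z)|=0$ and $|\Sur(Q_n,\Z/q\Z)|=q-1$, both independent of $n$. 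Yet $d(Q_n)=n+1$: because the $\Z/q\Z$-action is by scalars, the normal closure in $Q_n$ of any element of $(\Z/p\Z)^n$ is just the line it spans, so a $d$-element generating set can produce at most a $(d-1)$-dimensional subspace of $(\Z/p\Z)^n$. Moreover $Q_n$ embeds in $Q_1^n$ via the diagonal copy of $\Z/q\Z$, so with $H=Q_1$ every $Q_n$ lies in $\overline{\{H\}}$. Thus inside a single $\overline{\CC}$ you have quotients with bounded surjection counts to all $S_j$ but unbounded $d(Q)$, and your Zelmanov step cannot be reached from simple-quotient data alone. (Bounds of this flavour do exist, but they involve surjections onto the monolithic primitive quotients --- crowns --- not onto simple groups, and turning that into the statement you need is not routine.)

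The paper's route is short and avoids all of this: from finiteness of $\Sur(G,H)$ for every finite $H$ it observes that $G$ has only finitely many open subgroups of each index $n$ (each such subgroup gives a homomorphism $G\to S_n$, and $\Hom(G,S_n)$ is finite), and then cites \cite[Lemma 8.8]{SW22} to conclude that $G^{\CC}$ is finite for every finite $\CC$.
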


\begin{proof}
Assume that $G \in \GG$. Let $H$ be a finite group of order $\ell$ and $\CC$ be the set of finite groups of order at most $\ell$. Then $G^{\CC}$ is finite, so the set $\Sur(G, H) \cong \Sur(G^{\CC}, H)$ is also finite. Conversely, assume that $\Sur(G, H)$ is finite for every finite group $H$. Then $G$ has finitely many open subgroups of index $n$ for every positive integer $n$. By \cite[Lemma 8.8]{SW22a}, $G^{\CC}$ is finite for every finite set $\CC$ of finite groups. (There are some notational differences between this paper and \cite{SW22a}. The group $G^{\CC}$ in this paper is same as $G^{\DD}$ in the notation of \cite{SW22a}, where $\DD$ is the set of all subgroups of groups in $\CC$. If $\CC$ is a finite set, then $\DD$ is also a finite set so we can apply \cite[Lemma 8.8]{SW22a}.)
\end{proof}

\noindent \textit{Proof of Theorem \ref{thm1i}.} Following the proof of \cite[Theorem 1.1]{LW20}, it is enough to show that
\begin{equation} \label{eq22i}
\lim_{t \rightarrow \infty} \PP (X_{t, s} \in U_s \text{ for } 1 \leq s \leq r) 
= \PP (Y_{s} \in U_s \text{ for } 1 \leq s \leq r)     
\end{equation}
for every basic open sets $U_1, \hdots, U_r \in \GG_{\Gamma}$. Let $U_s = \left\{ G \in \GG_{\Gamma} \mid G^{\CC_s} \cong H_s \right\}$ for a finite set $\CC_s$ of finite $\left| \Gamma \right|'$-$\Gamma$-groups and a finite $\left| \Gamma \right|'$-$\Gamma$-group $H_s$. 
We may assume that $H_s$ is of level-$\CC_s$, because $U_s$ is empty when $H_s$ is not of level-$\CC_s$. 
The equation (\ref{eq22i}) is equivalent to
\begin{equation} \label{eq22j}
\lim_{t \rightarrow \infty} \PP (X_{t, s}^{\CC_s} \cong H_s \text{ for } 1 \leq s \leq r) 
= \PP (Y_s^{\CC_s} \cong H_s \text{ for } 1 \leq s \leq r).
\end{equation}
Let $\mu$ (resp. $\mu_t$) be measures on $\mfm$ given by the probability distribution of $(Y_1^{\CC_1}, \hdots, Y_r^{\CC_r})$ (resp. $(X_{t, 1}^{\CC_1}, \hdots, X_{t, r}^{\CC_r})$). 
For every $(G_1, \hdots, G_r) \in \mfm$, we have $\Sur_{\Gamma}(X_{t, s}, G_s) \cong \Sur_{\Gamma}(X_{t, s}^{\CC_s}, G_s))$ and $\Sur_{\Gamma}(Y_s, G_s) \cong \Sur_{\Gamma}(Y_s^{\CC_s}, G_s)$ so the assumption (\ref{eq1d}) implies that
\begin{equation*}
\lim_{t \rightarrow \infty} \EE(\prod_{s=1}^{r} \# \Sur_{\Gamma}(X_{t, s}^{\CC_s}, G_s)) = \EE(\prod_{s=1}^{r} \# \Sur_{\Gamma}(Y_s^{\CC_s}, G_s)) = O \left ( ( \prod_{s=1}^{r} \left| G_s \right| )^{O(1)} \right ).  
\end{equation*}
Therefore the conditions (\ref{eq22f}) and (\ref{eq22g}) are satisfied and the equation (\ref{eq22j}) follows from Theorem \ref{thm22g}. $\square$

\section{Joint distribution of the cokernels of random $p$-adic matrices} \label{Sec3}

In this section, we provide three applications of Theorem \ref{thm1c} to universality results for the joint distribution of the cokernels of random $p$-adic matrices. The first two of them generalize the main results of \cite{Lee23a} on the asymptotically independent cokernels to non-uniform matrices, and the last one gives a new result on the dependent cokernels. 

\subsection{Independent cokernels} \label{Sub31}

First we consider the joint distribution of $\cok(A_n+t_1I_n), \hdots, \cok(A_n + t_rI_n)$ for given integers $t_1, \hdots, t_r$ and random $p$-adic matrices $A_n$. 

\begin{lemma} \label{lem31a}
Let $t_1, \hdots, t_r$ be integers such that $p \nmid t_j - t_{j'}$ for each $j \neq j'$, $k \geq 1$ be an integer and $H_1, \hdots, H_r \in \GG_p$ be such that $p^kH_j=0$ for all $j$. Also let $R=\Z / p^k \Z$, $V=R^n$, $\left\{ e_1, \hdots, e_n \right\}$ be the standard basis of $V$, $v_1, \hdots, v_n \in V$ and $F_j \in \Sur_R(V, H_j)$ for $1 \leq j \leq r$. Suppose that $F_jv_i = -t_jF_je_i$ for every $1 \leq i \leq n$ and $1 \leq j \leq r$. Then the map
$$
F : V \rightarrow \prod_{j=1}^{r} H_j \;\; (v \mapsto (F_1 v, \hdots, F_r v))
$$
is surjective.
\end{lemma}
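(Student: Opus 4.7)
The key idea is to repackage the hypothesis as a common ``eigenvalue'' structure. Define $A \in \M_n(R)$ to be the endomorphism determined by $Ae_i = v_i$. The hypothesis $F_j v_i = -t_j F_j e_i$ then reads $F_j(Ae_i) = -t_j F_j(e_i)$, which by $R$-linearity of $F_j$ extends to $F_j \circ A = -t_j \cdot F_j$ on all of $V$. Equipping $V$ with the $R[t]$-module structure in which $t$ acts as $A$, and each $H_j$ with the structure in which $t$ acts as the scalar $-t_j$, every $F_j$ becomes $R[t]$-linear, so for each $\phi(t) \in R[t]$ and $v \in V$ we have
\[
F_j(\phi(A) v) = \phi(-t_j) F_j(v).
\]

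Next, the hypothesis $p \nmid t_j - t_{j'}$ for $j \neq j'$ says that each difference $t_{j'} - t_j$ is a unit in $R$, so Lagrange interpolation at the nodes $-t_1, \ldots, -t_r$ is available over $R$. I would introduce the idempotent-like polynomials
\[
e^{(j)}(t) := \prod_{j' \neq j} \frac{t + t_{j'}}{t_{j'} - t_j} \in R[t], \qquad 1 \leq j \leq r,
\]
which satisfy $e^{(j)}(-t_\ell) = \delta_{j\ell}$. Applying the displayed intertwining identity coordinatewise, for every $v \in V$ the element $e^{(j)}(A) v \in V$ has
\[
F\bigl(e^{(j)}(A) v\bigr) = \bigl(\delta_{1j} F_1 v,\, \delta_{2j} F_2 v,\, \ldots,\, \delta_{rj} F_r v\bigr),
\]
so its image lies entirely in the $j$-th summand of $\prod_{j=1}^r H_j$.

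To conclude, as $v$ ranges over $V$ the surjectivity of $F_j$ realizes every element of $H_j$ in the $j$-th coordinate with zeros elsewhere; summing over $j$ gives $\im F = \prod_{j=1}^r H_j$. The only potentially delicate step is the promotion of the hypothesis from the basis vectors $e_i$ to an $R[t]$-intertwining on all of $V$, but this is immediate from the $R$-linearity of each $F_j$. Everything else reduces to the observation that Lagrange interpolation over $R = \Z/p^k\Z$ is valid precisely when the interpolation nodes have pairwise unit differences, which is exactly the hypothesis given.
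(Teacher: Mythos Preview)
Your argument is correct. The promotion of the basis hypothesis to $F_j\circ A=-t_jF_j$ on all of $V$ is indeed immediate from $R$-linearity, the polynomial identity $F_j(\phi(A)v)=\phi(-t_j)F_j(v)$ follows by an obvious induction on $\deg\phi$, and the Lagrange polynomials $e^{(j)}(t)=\prod_{j'\neq j}(t+t_{j'})/(t_{j'}-t_j)$ genuinely lie in $R[t]$ because each $t_{j'}-t_j$ is a unit in $\Z/p^k\Z$.

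Your route differs from the paper's. The paper argues by induction on $r$: at the inductive step it considers the elements $v_i+t_1e_i=(A+t_1I)e_i$, observes that $F(v_i+t_1e_i)=(0,(t_1-t_2)F_2e_i,\ldots,(t_1-t_r)F_re_i)$, and applies the inductive hypothesis to $F_1^c=(F_2,\ldots,F_r)$ together with the automorphism $(h_2,\ldots,h_r)\mapsto((t_1-t_2)h_2,\ldots,(t_1-t_r)h_r)$ to conclude $\{0\}\times\prod_{j\geq 2}H_j\subset\im F$; a symmetric step gives the remaining factor. In effect the paper uses the single linear factors $t+t_j$ one at a time and lets the induction absorb the rest, whereas you multiply the linear factors together in advance to form the idempotent-like $e^{(j)}(t)$ and dispense with induction entirely. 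Your packaging via the $R[t]$-module structure makes the mechanism (simultaneous ``eigenvalues'' $-t_j$ that are pairwise distinct modulo $p$) more transparent and would generalize cleanly to any commutative base ring in which the differences $t_{j'}-t_j$ are units; the paper's version is slightly more hands-on but requires no preliminary observations about $A$ or polynomial functional calculus.
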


\begin{proof}
We proceed by induction on $r$. The case $r=1$ is trivial. Now assume that the statement holds for $r-1 \geq 1$. Define $H_1^c = \prod_{j=2}^{r} H_j$, $F_1^c : V \rightarrow H_1^c$ ($v \mapsto (F_2 v, \hdots, F_r v)$) and $\alpha \in \Aut(H_1^c)$ ($(h_2, \hdots, h_r) \mapsto ((t_1-t_2)h_2, \hdots, (t_1-t_r)h_r)$). Since the map $F_1^c$ is surjective by the induction hypothesis, we have
\begin{equation*}
\begin{split}
& \left< F(v_i+t_1e_i) \mid 1 \leq i \leq n \right> \\
= \, & \left< (0, (t_1-t_2)F_2e_i, \hdots, (t_1-t_r)F_re_i) \mid 1 \leq i \leq n \right> \\
= \, & \left\{ 0 \right\} \times \im(\alpha \circ F_1^c) \\
= \, & \left\{ 0 \right\} \times H_1^c.
\end{split}    
\end{equation*}
This shows that $\left\{ 0 \right\} \times H_1^c \leq \im(F)$, and we also have $(\prod_{j=1}^{r-1} H_j) \times \left\{ 0 \right\} \leq \im(F)$ by the same reason. Thus the map $F$ is surjective.
\end{proof}

\begin{proposition} \label{prop31b}
Let $t_1, \hdots, t_r$ be as in Lemma \ref{lem31a} and $A_n \in \M_n(\Z_p)$ be as in Theorem \ref{thm1a}. Then we have
\begin{equation*}
\lim_{n \rightarrow \infty} \EE(\prod_{j=1}^{r} \# \Sur(\cok(A_n + t_j I_n), H_j)) = 1
\end{equation*}
for every $H_1, \hdots, H_r \in \GG_p$.
\end{proposition}

\begin{proof}
Choose $k \geq 1$ such that $p^k H_j=0$ for all $j$. Let $R = \Z/p^k \Z$ and $X_n \in \M_n(R)$ be the reduction of $A_n$ modulo $p^k$. We understand $X_n$ as an element of $\Hom(W, V)$ for $V=W=R^n$ and let $v_i = X_ne_i$ where $\left\{ e_1, \hdots, e_n \right\}$ is the standard basis of $W$. By Lemma \ref{lem31a}, we have
\begin{equation} \label{eq31a2}
\begin{split}
& \EE(\prod_{j=1}^{r} \# \Sur(\cok(A_n + t_j I_n), H_j)) \\
= \; & \EE(\prod_{j=1}^{r} \# \Sur_R(\cok(X_n + t_j I_n), H_j)) \\
= \; & \sum_{F_1 \in \Sur_R(V, H_1)} \cdots \sum_{F_r \in \Sur_R(V, H_r)} \prod_{i=1}^{n} \PP \begin{pmatrix}
F_jv_i = -t_jF_je_i \\
\text{for } 1 \leq j \leq r
\end{pmatrix} \\
= \; & \sum_{\substack{F = (F_1, \hdots, F_r) \\ \in \Sur_R(V, \, \prod_{j=1}^{r} H_j)}} \prod_{i=1}^{n} 
\PP (Fv_i = -(t_1F_1e_i, \hdots, t_rF_re_i)) \\
= \; & \sum_{F \in \Sur_R(V, H)} \PP(FX_n = U_F)
\end{split}
\end{equation}
where $H = \prod_{j=1}^{r} H_j$ and $U_F \in \Hom(W, H)$ is defined by 
$$
U_Fe_i = -(t_1F_1e_i, \hdots, t_rF_re_i)
$$
for each $F \in \Sur_R(V, H)$. Following the proof of \cite[Theorem 4.12]{NW22a}, we can prove that there are constants $c_2, K_2>0$ (which are independent of $n$) such that
\begin{equation} \label{eq31a3}
\left| \sum_{F \in \Sur_R(V, H)} \PP(FX_n = U_F) - 1 \right| \leq K_2 n^{-c_2}
\end{equation}
for every $n$ and an $\alpha_n$-balanced random matrix $X_n \in \M_n(R)$. (To do this, we need to generalize \cite[Lemma 4.11]{NW22a} to an upper bound of $\PP(FX=A)$ for every $A \in \im (F)$. The proof for $A=0$ works for every $A \in \im (F)$.) We conclude the proof from the equations (\ref{eq31a2}) and (\ref{eq31a3}).
\end{proof}

Since the moments of the Cohen-Lenstra measure are always $1$, the following theorem is an immediate consequence of Theorem \ref{thm1c} and Proposition \ref{prop31b}.

\begin{theorem} \label{thm31c}
Let $t_1, \hdots, t_r$ be as in Lemma \ref{lem31a} and $A_n \in \M_n(\Z_p)$ be as in Theorem \ref{thm1a}. Then we have
\begin{equation*}
\lim_{n \rightarrow \infty} \PP \begin{pmatrix}
\cok(A_n+t_jI_n) \cong H_j \\
\text{for } 1 \leq j \leq r
\end{pmatrix} = \prod_{j=1}^{r} \frac{c_{\infty}(p)}{\left| \Aut(H_j) \right|}
\end{equation*}
for every $H_1, \hdots, H_r \in \GG_p$.
\end{theorem}

Now we consider the joint distribution of $\cok(A_n)$ and $\cok(A_n+B_n)$ for random $p$-adic matrices $A_n$ and a given sequence of $p$-adic matrices $(B_n)_{n \geq 1}$. In this case, it is a bit harder to compute the mixed moments of the cokernels compared to the first case.  

\begin{proposition} \label{prop31d}
Let $u \geq 0$ be an integer, $A_n \in \M_{n \times (n+u)}(\Z_p)$ be as in Theorem \ref{thm1a} and $(B_n)_{n \geq 1}$ be a sequence of matrices where $B_n \in \M_{n \times (n+u)}(\Z_p)$ and $\lim_{n \rightarrow \infty} r_p(B_n) = \infty$ where $r_p(B_n)$ is the rank of $\overline{B_n} \in \M_n(\F_p)$. Then we have
\begin{equation*}
\lim_{n \rightarrow \infty} \EE(\# \Sur(\cok(A_n), H_1) \# \Sur(\cok(A_n + B_n), H_2)) = \frac{1}{\left| H_1 \right|^{u} \left| H_2 \right|^{u}}
\end{equation*}
for every $H_1, H_2 \in \GG_p$.
\end{proposition}

\begin{proof}
Choose $k \geq 1$ such that $p^k H_1 = p^k H_2 = 0$. Let $R = \Z / p^k \Z$, $A_n' \in \M_{n \times (n+u)}(R)$ be the reduction of $A_n$ modulo $p^k$, $V = R^n$, $W = R^{n+u}$ and $\left\{ e_1, \hdots, e_{n+u} \right\}$ be the standard basis of $W$. Then $v_i = A_n' e_i \in V$ ($1 \le i \le n+u$) are random independent $\alpha_n$-balanced vectors in $V$. Let
$$
B_n' = \begin{pmatrix}
b_1 & b_2 & \cdots & b_{n+u} \\
\end{pmatrix} \in \M_{n \times (n+u)}(R)
$$
be the reduction of $B_n$ modulo $p^k$. Then we have
\begin{equation} \label{eq31b2}
\begin{split}
& \EE(\# \Sur(\cok(A_n), H_1) \# \Sur(\cok(A_n + B_n), H_2)) \\
= \, & \sum_{\substack{F_1 \in \Sur_R(V, H_1) \\ F_2 \in \Sur_R(V, H_2)}} \prod_{i=1}^{n+u} \PP (F_1v_i = 0 \text{ and } F_2v_i = -F_2b_i).
\end{split}    
\end{equation}
Let $p_1 : H_1 \times H_2 \rightarrow H_1$ and $p_2 : H_1 \times H_2 \rightarrow H_2$ be projections and
$$
\LL = \left\{ G \leq H_1 \times H_2 \mid p_1(G)= H_1 \text{ and } p_2(G) = H_2 \right\}.
$$
For each $G \in \LL$, let $G_1$ (resp. $G_2$) be the subgroup of $H_1$ (resp. $H_2$) such that $G \cap (H_1 \times \left\{ 0 \right\}) = G_1 \times \left\{ 0 \right\}$ (resp. $G \cap (\left\{ 0 \right\} \times H_2) = \left\{ 0 \right\} \times G_2$). Also let
\begin{equation*}
\begin{split}
S_G & := \left\{ F \in \Sur_R(V, H_1) \times \Sur_R(V, H_2) \subset \Hom(V, H_1 \times H_2) \mid \im(F) = G \right\} \\
& \: \cong \Sur_R(V, G).
\end{split}
\end{equation*}
If $F = (F_1, F_2) \in S_G$, then $F_1(\ker F_2) = G_1$ and $F_2(\ker F_1) = G_2$. We also note that
\begin{equation*}
\begin{split}
\left| H_1 \right|\left| F_2(\ker F_1) \right| 
= \, & [V : \ker F_1][\ker F_1 : \ker F_1 \cap \ker F_2] \\
= \, & [V : \ker F_1 \cap \ker F_2] \\
= \, & [V : \ker F_2][\ker F_2 : \ker F_1 \cap \ker F_2] \\
= \, & \left| H_2 \right|\left| F_1(\ker F_2) \right|
\end{split}    
\end{equation*}
and $\ker F = \ker F_1 \cap \ker F_2$ so we have $\left| H_1 \right| \left| G_2 \right| = \left| G \right| = \left| H_2 \right| \left| G_1 \right|$, or equivalently, 
$$
[H_1 : G_1] = [H_2 : G_2] = [H_1 \times H_2 : G]
$$ 
for $G \in \LL$. 

For a given $F = (F_1, F_2) \in S_G$, assume that 
$$
\text{P}_{F, b} 
:= \prod_{i=1}^{n+u} \PP (Fv_i = (0, -F_2b_i)) 
= \prod_{i=1}^{n+u} \PP (F_1v_i = 0 \text{ and } F_2v_i = -F_2b_i) > 0.
$$
Then there are $w_1, \hdots, w_{n+u} \in V$ such that
$F_1w_i=0$ and $F_2w_i = -F_2b_i$ for each $i$. In this case, $F_2b_i = - F_2w_i \in F_2(\ker F_1)$ so $F_2(\left< b_i \right>) \leq F_2(\ker F_1)$. Similarly, $F_1(w_i+b_i) = F_1(b_i)$ and $F_2(w_i+b_i)=0$ so $F_1(\left< b_i \right>) \leq F_1(\ker F_2)$. Now define
$$
S_{G}^b := \left\{ F = (F_1, F_2) \in S_{G} \mid F_1(\left< b_i \right>) \leq G_1 \text{ and } F_2(\left< b_i \right>) \leq G_2  \right\}
$$
for $G \in \LL$. For a $F \in S_G$, $\text{P}_{F, b}>0$ only if $F \in S_G^b$ so we have
\begin{equation} \label{eq31b3}
\sum_{\substack{F_1 \in \Sur_R(V, H_1) \\ F_2 \in \Sur_R(V, H_2)}} \prod_{i=1}^{n+u} \PP (F_1v_i = 0 \text{ and } F_2v_i = -F_2b_i) 
= \sum_{G \in \LL} \sum_{F \in S_{G}}  \text{P}_{F, b} 
=\sum_{G \in \LL} \sum_{F \in S_{G}^{b}} \text{P}_{F, b}.
\end{equation}
Let $r = r_p(H_1)$, $r_n = r_p(B_n)$ and assume that $n$ is sufficiently large so that $r_n \geq r$. We compute the limit $\displaystyle \lim_{n \rightarrow \infty} \sum_{G \in \LL} \sum_{F \in S_{G}^{b}} \text{P}_{F, b} = \sum_{G \in \LL} \lim_{n \rightarrow \infty} \sum_{F \in S_{G}^{b}} \text{P}_{F, b}$ by computing $\displaystyle \lim_{n \rightarrow \infty} \sum_{F \in S_{G}^{b}} \text{P}_{F, b}$ for each $G \in \LL$. We proceed in two cases based on whether $G = H_1 \times H_2$ or not.
    
    \phantom{}

    \textbf{Case I.} $G = H_1 \times H_2$. $S_{H_1 \times H_2}^b = \Sur_R(V, H_1 \times H_2)$. Define $U_F \in \Hom(W, H_1 \times H_2)$ by $U_Fe_i = (0, -F_2b_i)$ for each $F \in \Sur_R(V, H_1 \times H_2)$. 
    Following the proof of \cite[Theorem 4.12]{NW22a} as in Proposition \ref{prop31b}, we can prove that there are constants $c_2, K_2>0$ (which are independent of $n$) such that
    $$
    \left | \sum_{F \in 
   S_{H_1 \times H_2}^b} \text{P}_{F, b} - \left | H_1 \times H_2 \right |^{-u} \right |
    = \left | \sum_{F \in 
    \Sur_R(V, H_1 \times H_2)} \PP(FA_n' = U_F) - \left | H_1 \times H_2 \right |^{-u} \right | \le K_2n^{-c_2}
    $$
    for every $n$ and an $\alpha_n$-balanced random matrix $A_n' \in \M_{n \times (n+u)}(R)$. Now we have
    \begin{equation} \label{eq31b4}
    \lim_{n \rightarrow \infty} \sum_{F \in S_{H_1 \times H_2}^{b}} \text{P}_{F, b}
    = \frac{1}{\left| H_1 \times H_2 \right|^{u}}
    = \frac{1}{\left| H_1 \right|^{u} \left| H_2 \right|^{u}}.
    \end{equation}
    
    \textbf{Case II.} $G \neq H_1 \times H_2$. Choose $b_{i_1}, \hdots, b_{i_{r_n}}$ such that $\left\{ b_{i_1}, \hdots, b_{i_{r_n}}, y_1, \hdots, y_{n-r_n} \right\}$ is a basis of $V$ for some $y_1, \hdots, y_{n-r_n} \in V$. For $(F_1, F_2) \in S_G^b$, $F_1$ is determined by $F_1 b_{i_j} \in G_1$ ($1 \leq j \leq r_n$) and $F_1 y_j \in H_1$ ($1 \leq j \leq n-r_n$) so 
    \begin{equation*} 
    \# \begin{Bmatrix}
F_1 \in \Sur(V, H_1) \mid (F_1, F_2) \in S_{G}^b \\
\text{for some } F_2 \in \Sur(V, H_2)
\end{Bmatrix}
    \leq \left| G_1 \right|^{r_n} \left| H_1 \right|^{n-r_n}.
    \end{equation*}
    Now we bound the number of $F_2 \in \Sur_R(V, H_2)$ such that $(F_1, F_2) \in S_G^b$ for a fixed $F_1 \in \Sur_R(V, H_1)$. 
    Since $V / \ker(F_1) \cong H_1 \cong \prod_{j=1}^{r} \Z/p^{\lambda_j} \Z$ ($k \geq \lambda_1 \geq \cdots \geq \lambda_r \geq 1$), there is a basis $\left\{ z_1, \hdots, z_n \right\}$ of $V$ such that $p^{\lambda_1}z_1, \hdots, p^{\lambda_r}z_r, z_{r+1}, \hdots, z_n$ generate $\ker(F_1)$. 
Then $F_2$ is determined by $F_2 z_j \in H_2$ ($1 \leq j \leq r$) and $F_2 z_j \in G_2$ ($r+1 \leq j \leq n$) so the number of possible choices of $F_2$ is bounded above by $\left| H_2 \right|^r \left| G_2 \right|^{n-r}$. Thus we have
    $$
    \left| S_G^b \right| 
    \leq \left| G_1 \right|^{r_n} \left| H_1 \right|^{n-r_n} \left| H_2 \right|^r \left| G_2 \right|^{n-r}
    = \left| G \right|^n \left ( \frac{\left| G \right|}{\left| H_1 \times H_2 \right|} \right )^{r_n-r}.
    $$
    If $F \in S_G^b$ is a code of distance $\delta n$ (as an element of $\Sur_R(V, G)$) for $\delta > 0$ in the sense of \cite[Definition 4.4]{NW22a}, then 
    $$
    \text{P}_{F, b} \leq \frac{K_1}{\left| G \right|^{n+u}}
    $$
    for some constant $K_1>0$ by \cite[Lemma 4.7]{NW22a}. Thus we have
    $$
    \sum_{\substack{F \in S_{G}^{b} \\ F \text{ code of distance } \delta n}} \text{P}_{F, b}
    \leq \frac{K_1 \left| S_G^b \right|}{\left| G \right|^{n+u}}
    \leq \frac{K_1 \left| G \right|^{-u}}{2^{r_n-r}} 
    $$
    and the assumption $\lim_{n \rightarrow \infty} r_n = \infty$ implies that
    \begin{equation} \label{eq31b5}
    \lim_{n \rightarrow \infty} \sum_{\substack{F \in S_{G}^{b} \\ F \text{ code of distance } \delta n}} \text{P}_{F, b} = 0.
    \end{equation}
    
    For each $F \in S_G^b$, we have $\text{P}_{F, b} = \PP(FA_n' = B_F^{b})$ where $B_F^{b} \in \Hom(W, G)$ is given by $B_F^{b}e_i = (0, -F_2b_i)$ for each $1 \le i \le n+u$. 
    Following the proof of the last equation in \cite[p. 23]{NW22a}, we can prove that there are constants $c_3, K_3>0$ (which are independent of $n$) such that
\begin{equation} \label{eqnew2}
\sum_{\substack{F \in S_{G}^{b} \\ F \text{ not code of distance } \delta n}}\text{P}_{F, b} 
= \sum_{\substack{F \in S_{G}^{b} \subset \Sur_R(V, G) \\ F \text{ not code of distance } \delta n}} \PP(FA_n' = B_F^{b}) 
\leq K_3 n^{-c_3}.
\end{equation}
    for all sufficiently small $\delta = \delta_{G} > 0$. (To do this, we need to generalize \cite[Lemma 4.11]{NW22a} to an upper bound of $\PP(FX=A)$ for every $A \in \im (F)$. The proof for $A=0$ works for every $A \in \im (F)$.) The equation (\ref{eqnew2}) implies that
    \begin{equation} \label{eq31b6}
    \lim_{n \rightarrow \infty} \sum_{\substack{F \in S_{G}^{b} \\ F \text{ not code of distance } \delta n}} \text{P}_{F, b} = 0
    \end{equation}
    for all sufficiently small $\delta = \delta_{G} > 0$.

By the equations (\ref{eq31b4}), (\ref{eq31b5}) and (\ref{eq31b6}), we have
\begin{equation} \label{eq31b7}
\lim_{n \rightarrow \infty} \sum_{G \in \LL} \sum_{F \in S_{G}^{b}} \text{P}_{F, b} = \frac{1}{\left| H_1 \right|^{u} \left| H_2 \right|^{u}}.
\end{equation}
We conclude the proof from the equations (\ref{eq31b2}), (\ref{eq31b3}) and (\ref{eq31b7}).
\end{proof}

\begin{theorem} \label{thm31e}
Let $u$, $A_n$ and $(B_n)_{n \geq 1}$ be as in Proposition \ref{prop31d}. Then we have
\begin{equation*} 
\lim_{n \rightarrow \infty} \PP \begin{pmatrix}
\cok(A_n) \cong H_1 \text{ and} \\
\cok(A_n+B_n) \cong H_2
\end{pmatrix} = \prod_{i=1}^{2} \frac{\prod_{k=1}^{\infty} (1-p^{-k-u})}{\left| H_i \right|^{u} \left| \Aut(H_i) \right| }
\end{equation*}
for every $H_1, H_2 \in \GG_p$.
\end{theorem}

\begin{proof}
Let $Y_p(u)$ be a random finite abelian $p$-group such that 
$$
\PP(Y_p(u) \cong B) = \frac{\prod_{k=1}^{\infty} (1-p^{-k-u})}{\left| B \right|^{u} \left| \Aut(B) \right|}
$$
for every $B \in \GG_p$. By Proposition \ref{prop31d} and \cite[Lemma 3.2]{Woo19}, we have
\begin{equation*}
\begin{split}
& \lim_{n \rightarrow \infty} \EE(\# \Sur(\cok(A_n), H_1) \# \Sur(\cok(A_n + B_n), H_2)) \\
= \, & \frac{1}{\left| H_1 \right|^{u} \left| H_2 \right|^{u}} \\
= \, & \EE(\# \Sur(Y_p(u), H_1)) \EE(\# \Sur(Y_p(u), H_2)).
\end{split}    
\end{equation*}
Now Theorem \ref{thm1c} finishes the proof.
\end{proof}

\subsection{Joint distribution of $\cok(A)$ and $\cok(A+pI_n)$} \label{Sub32}

In this section, we compute the limiting joint distribution of $\cok(A_n)$ and $\cok(A_n+pI_n)$ for random matrices $A_n \in \M_n(\Z_p)$. The following notation will be used throughout this section.
\begin{itemize}
    \item $\mu_n$ is the Haar probability measure on $\M_n(\Z_p)$.
    
    \item Let $E$ be the set of partitions. For $\lambda = (\lambda_1 \geq \cdots \geq \lambda_r) \in E$, let $\ell(\lambda) = r$, $\left| \lambda \right| = \sum_{i=1}^{r} \lambda_i$ and $G_{\lambda} = \prod_{i=1}^{r} \Z / p^{\lambda_i} \Z$ be the finite abelian $p$-group of type $\lambda$. 
    
    \item Let $E^{(r)} = \left\{ \lambda \in E \mid \ell(\lambda) \leq r \right\}$, $E_X = \left\{ \lambda \in E \mid \left| \lambda \right| \leq X \right\}$ and $\GG_p(X) = \left\{ G_{\lambda} \mid \lambda \in E_X \right\} = \left\{ H \in \GG_p \mid \left| H \right| \leq p^X \right\}$.

    \item For $\lambda = (\lambda_1 \geq \cdots \geq \lambda_r) \in E$, define $D_{\lambda} = \diag(p^{\lambda_1}, \hdots, p^{\lambda_r}) \in \M_r(\Z_p)$. For $n \geq r$, let $D_{\lambda, n} = \begin{pmatrix}
D_{\lambda} & O \\
O & I_{n-r} \\
\end{pmatrix} \in \M_n(\Z_p)$ and 
$$
[D_{\lambda, n}] = \GL_n(\Z_p)D_{\lambda, n}\GL_n(\Z_p) \subset \M_n(\Z_p).
$$

    \item $\M_n(\Z_p)^{\neq 0} := \left \{ A \in \M_n(\Z_p) \mid \det (A) \neq 0 \right \} = \underset{\lambda \in E^{(n)}}{\bigsqcup} [D_{\lambda, n}]$ (Smith normal form). For $n \geq X$, define $\M_n(\Z_p)^{\neq 0}_{X} := \underset{\lambda \in E_{X}}{\bigsqcup} [D_{\lambda, n}] \subset \M_n(\Z_p)^{\neq 0}$.

    \item For $n \geq 2r$, let
    $$
\widetilde{\M}_{n, r}(\Z_p) := \left \{ \begin{pmatrix}
A_1 & A_2 & A_3\\ 
I_r & A_4 & A_5\\ 
O & A_6 & A_7
\end{pmatrix} \in \M_{r+r+(n-2r)}(\Z_p) \right \} \subset \M_n(\Z_p).
$$
Note that $A_1, A_2, A_4 \in \M_r(\Z_p)$, $A_3, A_5 \in \M_{r \times (n-2r)}(\Z_p)$, $A_6 \in \M_{(n-2r) \times r}(\Z_p)$ and $A_7 \in \M_{n-2r}(\Z_p)$.

    \item For a uniform random matrix $A \in \M_n(\Z_p)$ and a given $B \in \M_n(\Z_p)$, define
    $$
    P_{B}(H_1, H_2) := \PP \begin{pmatrix}
\cok(A) \cong H_1 \text{ and} \\ 
\cok(A+pB) \cong H_2
\end{pmatrix}.
    $$
    For $n \geq 2r$, a uniform random matrix $\widetilde{A} \in \widetilde{\M}_{n, r}(\Z_p)$ and a given $B \in \M_n(\Z_p)$, define
    $$
\widetilde{P}_{B, r}(H_1, H_2) := \PP \begin{pmatrix}
\cok(\widetilde{A}) \cong H_1 \text{ and} \\ 
\cok(\widetilde{A}+pB) \cong H_2
\end{pmatrix}.
    $$
\end{itemize}

\begin{lemma} \label{lem32a}
$$
\lim_{X \rightarrow \infty} \lim_{n \rightarrow \infty} \mu_n(\M_n(\Z_p)^{\neq 0}_{X}) = 1.
$$
\end{lemma}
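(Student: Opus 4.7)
The plan is to identify $\mu_n(\M_n(\Z_p)^{\neq 0}_X)$ with a probability involving $\cok(A_n)$ for Haar-random $A_n$, and then apply the Friedman--Washington formula (\ref{eq1a}) together with the classical Hall identity $\sum_{H \in \GG_p} \frac{c_\infty(p)}{|\Aut(H)|} = 1$.

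First, I would observe that the locus of singular matrices in $\M_n(\Z_p)$ is cut out by $\det = 0$ and hence has Haar measure zero, so $\mu_n(\M_n(\Z_p)^{\neq 0}) = 1$. If $A_n$ denotes a $\mu_n$-Haar random matrix and $n \geq X$, then the decomposition $\M_n(\Z_p)^{\neq 0}_X = \bigsqcup_{\lambda \in E_X} [D_{\lambda, n}]$, combined with the fact that $A \in [D_{\lambda, n}]$ if and only if $\cok(A) \cong G_\lambda$, gives
$$
\mu_n(\M_n(\Z_p)^{\neq 0}_X) = \sum_{H \in \GG_p(X)} \PP(\cok(A_n) \cong H).
$$

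Second, since $\GG_p(X)$ is a finite set, equation (\ref{eq1a}) lets us pass to the $n \to \infty$ limit term by term:
$$
\lim_{n \rightarrow \infty} \mu_n(\M_n(\Z_p)^{\neq 0}_X) = \sum_{H \in \GG_p(X)} \frac{c_\infty(p)}{|\Aut(H)|}.
$$
Finally, letting $X \to \infty$ (so that $\GG_p(X)$ exhausts $\GG_p$) and applying monotone convergence yields $\sum_{H \in \GG_p} \frac{c_\infty(p)}{|\Aut(H)|}$, which equals $1$ by the Hall identity.

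There is essentially no obstacle: the only subtlety is justifying the interchange of the $n \to \infty$ limit with the sum over $\GG_p(X)$, but finiteness of $\GG_p(X)$ for each fixed $X$ makes this trivial. All the content is packaged inside (\ref{eq1a}) and the classical Hall identity.
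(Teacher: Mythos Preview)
Your proof is correct and follows essentially the same approach as the paper: decompose $\M_n(\Z_p)^{\neq 0}_X$ as a finite disjoint union of Smith normal form orbits, swap the $n\to\infty$ limit with the finite sum over $E_X$, apply the Friedman--Washington formula (\ref{eq1a}), and finish with the Hall identity $\sum_{H\in\GG_p} c_\infty(p)/|\Aut(H)|=1$. The observation that the singular locus has measure zero is a harmless extra remark not actually needed for the argument.
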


\begin{proof}
Since $E_X$ is a finite set, we have
\begin{align*}
\lim_{X \rightarrow \infty} \lim_{n \rightarrow \infty} \mu_n(\M_n(\Z_p)^{\neq 0}_{X})
& = \lim_{X \rightarrow \infty} \lim_{n \rightarrow \infty} \sum_{\lambda \in E_X} \mu_n([D_{\lambda, n}]) \\
& = \lim_{X \rightarrow \infty} \sum_{\lambda \in E_X} \lim_{n \rightarrow \infty} \mu_n([D_{\lambda, n}]) \\
& = \lim_{X \rightarrow \infty} \sum_{H \in \GG_p(X)}  \lim_{n \rightarrow \infty} \underset{A \in \M_n(\Z_p)}{\PP} (\cok (A) \cong H) \\
& = \sum_{H \in \GG_p} \frac{c_{\infty}(p)}{\left| \Aut(H) \right|} \\
& = 1. 
\end{align*}
\end{proof}

The proof of the next lemma is similar to \cite[Proposition 4.2]{Lee23a}.

\begin{lemma} \label{lem32b}
For every $\lambda \in E$ and $H_1, H_2 \in \GG_p$, we have
$$
\lim_{n \rightarrow \infty} \left| P_{D_{\lambda, n}}(H_1, H_2) - P_{I_n}(H_1, H_2) \right| = 0.
$$
\end{lemma}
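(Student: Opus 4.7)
The plan is to compare the mixed moments $\EE(\#\Sur(\cok A,H_1)\,\#\Sur(\cok(A+pB),H_2))$ for $B=D_{\lambda,n}$ and $B=I_n$, show that their difference vanishes as $n\to\infty$, and then transfer the result to the probabilities via Theorem~\ref{thm21c}. Following the expansion in the proof of Proposition~\ref{prop31d}, fix $k\geq 1$ with $p^k H_j=0$, reduce $A$ modulo $p^k$, and write
\[
\EE\bigl(\#\Sur(\cok A,H_1)\,\#\Sur(\cok(A+pB),H_2)\bigr)
=\sum_{(F_1,F_2)} \prod_{i=1}^n \PP\bigl(F_1 v_i=0,\ F_2 v_i=-pF_2 b_i\bigr),
\]
where the sum runs over $(F_1,F_2)\in\Sur(V,H_1)\times\Sur(V,H_2)$ with $V=(\Z/p^k\Z)^n$, and $v_i,b_i\in V$ are the columns of $A$ and $B$ modulo $p^k$. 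Each factor equals $|G|^{-1}$ if $(0,-pF_2 b_i)\in G:=\im(F_1,F_2)\leq H_1\times H_2$, and $0$ otherwise.

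For $G\leq H_1\times H_2$, set $G_2=\{h\in H_2:(0,h)\in G\}$. By surjectivity of $F_2$, the $B=I_n$ condition collapses to $pH_2\subseteq G_2$, which depends only on $G$. For $B=D_{\lambda,n}$, since $b_i=p^{\lambda_i}e_i$ for $i\leq r:=\ell(\lambda)$ and $b_i=e_i$ otherwise, the condition becomes $pF_2(V_{>r})\subseteq G_2$ together with $p^{\lambda_i+1}F_2 e_i\in G_2$ for $i\leq r$, where $V_{>r}:=\langle e_{r+1},\dots,e_n\rangle$. When $pH_2\subseteq G_2$ both conditions hold automatically, so the summands agree and contribute equally to the two moments. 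The mixed moment difference is therefore supported on those $G$ with $pH_2\not\subseteq G_2$; for such $G$ the $B=D_{\lambda,n}$ side forces $F_2(V_{>r})\subseteq K$ for some proper subgroup $K\subsetneq H_2$. Counting pairs $(F_1,F_2)$ with $\im=G$ and $F_2(V_{>r})\subseteq K$ by the number of maps $V\to G$ satisfying this restriction gives at most $|G|^r(|G_1||K|)^{n-r}$, where $G_1=\{h:(h,0)\in G\}$ and $|G|=|G_1||H_2|$. Dividing by $|G|^n$ yields $(|K|/|H_2|)^{n-r}\leq p^{-(n-r)}$, since $H_2$ is a finite $p$-group and $[H_2:K]\geq p$. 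Summing over the finitely many admissible $G$ and $K$ shows that the mixed moment difference is $O(p^{-(n-r)})=o(1)$.

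To deduce convergence of the probabilities, the same expansion yields a uniform moment bound $\EE(\#\Sur(\cok A,H_1)\,\#\Sur(\cok(A+pB),H_2))=O_{H_1,H_2}(1)=O(m(H_1)m(H_2))$. Argue by contradiction: if $|P_{D_{\lambda,n}}(H_1,H_2)-P_{I_n}(H_1,H_2)|$ does not tend to $0$, pass to a subsequence on which it stays $\geq\epsilon>0$, then by a diagonal extraction on the countable set $\GG_p^2$ pass to a further subsequence on which $P_{D_{\lambda,n}}(H_1',H_2')$ and $P_{I_n}(H_1',H_2')$ converge for every $(H_1',H_2')\in\GG_p^2$. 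The limits $q_D,q_I$ are probability measures on $\GG_p^2$ (tightness follows from the moment bound) with identical mixed moments satisfying the required growth, so Theorem~\ref{thm21c} forces $q_D=q_I$, contradicting $\epsilon>0$. The main obstacle is the precise combinatorial bound in the second paragraph --- specifically, verifying that the extra conditions $p^{\lambda_i+1}F_2 e_i\in G_2$ for $i\leq r$ do not contribute additional factors that compromise the $p^{-(n-r)}$ estimate --- but this reduces to elementary counting of subgroups of $H_1\times H_2$ and $H_2$.
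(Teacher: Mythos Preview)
Your moment computation is correct: the mixed-moment difference is supported on those $G\leq H_1\times H_2$ with $pH_2\not\subseteq G_2$, and for such $G$ the restriction $F_2(V_{>r})\subseteq K$ together with $|G\cap(H_1\times K)|\leq|G_1||K|$ and $|G|=|G_1||H_2|$ gives the $(|K|/|H_2|)^{n-r}\leq p^{-(n-r)}$ decay. Your route is, however, entirely different from the paper's. The paper argues by an elementary coupling: after restricting $A$ to the set $\widetilde{\M}_{n,r}(\Z_p)$ (which a uniform matrix hits with probability $c_{n-r,r}\to1$), explicit row and column operations show that the joint law of $(\cok A,\cok(A+pD_{\lambda,n}))$ on this set is literally independent of $\lambda\in E^{(r)}$. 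This yields the explicit bound $|P_{D_{\lambda,n}}-P_{I_n}|\leq 2(1-c_{n-r,r})$ with no moment input and no appeal to Theorem~\ref{thm21c}.

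The genuine gap in your argument is the last paragraph. Theorem~\ref{thm21c} is a statement about a sequence converging to a \emph{specified} target $Y$; to use it (or Lemma~\ref{lem21b}) with $Y=q_I$ you must first verify that the subsequential limit $q_I$ actually has the limiting moments $M(H_1',H_2')$. Pointwise convergence $P_{I_n}\to q_I$ plus tightness does not give this---Fatou yields only one inequality---so ``identical mixed moments'' is asserted, not proved. The interchange can be salvaged by the dominated-convergence step inside the proof of Theorem~\ref{thm21c}, or more cleanly by invoking the existence-of-limit clause at the end of that proof (applied separately to the $D_{\lambda,n}$-sequence and the $I_n$-sequence, it produces the common probability limit directly and renders the subsequence extraction superfluous). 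Note that the obvious candidate for $Y$, the measure of Proposition~\ref{prop32d}, is established in the paper \emph{via} the present lemma, so citing it here would be circular. Finally, ``$O_{H_1,H_2}(1)=O(m(H_1)m(H_2))$'' needs to be said honestly: record that the $I_n$-moment is at most $|\LL_1|=N(r_p(H_1),r_p(H_2))$ and then invoke the subgroup-counting bound from the proof of Theorem~\ref{thm32f} to get the uniform constant.
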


\begin{proof}
Fix $H_1, H_2 \in \GG_p$, $r \geq 0$ and assume that $n>2r$. For every $\lambda \in E^{(r)}$, we obtain
\begin{equation} \label{eq32a}
\left| P_{D_{\lambda, n}}(H_1, H_2) - \widetilde{P}_{D_{\lambda, n}, r}(H_1, H_2) \right| \leq 1 - c_{n-r, r} := 1 - \prod_{j=0}^{r-1}(1 - \frac{1}{p^{n-r-j}})
\end{equation}
by applying \cite[Lemma 2.3]{Lee23a} as in \cite[Section 2.2]{Lee23a}. Let
$$
A = \begin{pmatrix}
A_1 & A_2 & A_3 \\ 
I_r & A_4 & A_5 \\ 
O & A_6 & A_7 \\
\end{pmatrix} \in \widetilde{\M}_{n, r}(\Z_p).
$$
By applying the transformation $(A, B) \, \Rightarrow (A', B')$ given in the proof of \cite[Proposition 4.2]{Lee23a}, we can simultaneously transform $(A, D_{\lambda, n})$ to
$$
\small
(A', D') = \left ( \begin{pmatrix}
O & A_2' & A_3' \\
I_r & O & O \\
O & A_6 & A_7 \\
\end{pmatrix}, \, \begin{pmatrix}
D_{\lambda, r} & -A_1' & O \\
O & I_r & O \\
O & O & I_{n-2r} \\
\end{pmatrix}  \right )
$$
for 
$$
(A_1', A_2', A_3') := (A_1+D_{\lambda, r}A_4, \, A_2-A_1A_4-D_{\lambda, r}A_5A_6, \, A_3-A_1A_5 +D_{\lambda, r}A_5A_7).
$$
Now we prove that the matrices $A_1'$, $A_2'$, $A_3'$, $A_6$ and $A_7$ are uniform and independent. 
\begin{itemize}
    \item For a given $D_{\lambda, r}$, $A_4$ and $A_5$, the matrices $A_1$, $A_2$, $A_3$, $A_6$ and $A_7$ are uniform and independent.

    \item $A_3' = A_3-A_1A_5 +D_{\lambda, r}A_5A_7$ so the matrices $A_1$, $A_2$, $A_3'$, $A_6$ and $A_7$ are uniform and independent.

    \item $A_2'=A_2-A_1A_4-D_{\lambda, r}A_5A_6$ so the matrices $A_1$, $A_2'$, $A_3'$, $A_6$ and $A_7$ are uniform and independent.

    \item $A_1' = A_1+D_{\lambda, r}A_4$ so the matrices $A_1'$, $A_2'$, $A_3'$, $A_6$ and $A_7$ are uniform and independent.
\end{itemize}
Now we have
$$
\cok(A) \cong \cok (A') \cong \cok \begin{pmatrix}
A_2' & A_3' \\
A_6 & A_7 \\
\end{pmatrix}
$$
and
\begin{align*}
\small
\cok(A+pD_{\lambda, n}) & \cong \cok(A'+pD') \\
& = \cok \begin{pmatrix}
pD_{\lambda, r} & A_2'-pA_1' & A_3' \\ 
I_r & pI_r & O \\ 
O & A_6 & A_7+pI_{n-2r}
\end{pmatrix} \\
& \cong \cok \begin{pmatrix}
O & A_2'-pA_1'-p^2D_{\lambda, r} & A_3' \\ 
I_r & pI_r & O \\ 
O & A_6 & A_7+pI_{n-2r}
\end{pmatrix} \\
& \cong \cok \left ( \begin{pmatrix}
A_2' & A_3' \\
A_6 & A_7 \\
\end{pmatrix} + p \begin{pmatrix}
-A_1'' & O \\
O & I_{n-2r} \\
\end{pmatrix}  \right ).
\end{align*}
Since $A_1'' = A_1'+pD_{\lambda, r}$, the matrices $A_1''$, $A_2'$, $A_3'$, $A_6$ and $A_7$ are uniform and independent. Thus the probability $\widetilde{P}_{D_{\lambda, n}, r}(H_1, H_2)$ is independent of the choice of $\lambda \in E^{(r)}$ and the equation (\ref{eq32a}) implies that
\begin{equation*}
\left| P_{D_{\lambda, n}}(H_1, H_2) - P_{I_n}(H_1, H_2) \right| \leq 2(1 - c_{n-r, r})
\end{equation*}
for every $\lambda \in E^{(r)}$, which finishes the proof.
\end{proof}

\begin{lemma} \label{lem32c}
Let $A_n, B_n \in \M_n(\Z_p)$ be uniform random matrices for each $n$. Then we have
\begin{equation*} 
\lim_{n \rightarrow \infty} \left| \PP \begin{pmatrix}
\cok(A_n) \cong H_1 \text{ and} \\ 
\cok(A_n+pB_n) \cong H_2
\end{pmatrix} - P_{I_n}(H_1, H_2) \right| = 0
\end{equation*}
for every $H_1, H_2 \in \GG_p$.
\end{lemma}

\begin{proof}
Fix $X>0$ and denote $(A_n, B_n)=(A, B)$ for simplicity. For any $n \geq X$, we have
\begin{equation*}
\small
\begin{split}
& \PP \begin{pmatrix}
\cok(A) \cong H_1 \text{ and} \\ 
\cok(A+pB) \cong H_2
\end{pmatrix} \\
= & \sum_{\lambda \in E_X} (\mu_n \times \mu_n) \left ( \left \{ (A, B) \in \M_n(\Z_p) \times [D_{\lambda, n}] \mid \begin{matrix}
\cok(A) \cong H_1 \text{ and} \\ 
\cok(A+pB) \cong H_2
\end{matrix} \right \} \right ) \\
+ & (\mu_n \times \mu_n) \left ( \left \{ (A, B) \in \M_n(\Z_p) \times (\M_n(\Z_p) \setminus \M_n(\Z_p)^{\neq 0}_{X} ) \mid \begin{matrix}
\cok(A) \cong H_1 \text{ and} \\ 
\cok(A+pB) \cong H_2
\end{matrix} \right \} \right ) \\
= & \sum_{\lambda \in E_X} \mu_n([D_{\lambda, n}])P_{D_{\lambda, n}}(H_1, H_2) + \varepsilon_{n, X}(H_1, H_2)
\end{split}    
\end{equation*}
with 
\begin{equation} \label{eqnew1}
0 \leq \varepsilon_{n, X}(H_1, H_2) \leq 
1 - \mu_n(\M_n(\Z_p)^{\neq 0}_{X}).
\end{equation}
Since $E_X$ is a finite set, we have
\begin{equation*}
\begin{split}
& \lim_{n \rightarrow \infty} \left| \PP \begin{pmatrix}
\cok(A) \cong H_1 \text{ and} \\ 
\cok(A+pB) \cong H_2
\end{pmatrix} - P_{I_n}(H_1, H_2) \right| \\
\leq & \sum_{\lambda \in E_X} \lim_{n \rightarrow \infty} \mu_n([D_{\lambda, n}]) \left| P_{D_{\lambda, n}}(H_1, H_2) - P_{I_n}(H_1, H_2) \right| \\
+ & \lim_{n \rightarrow \infty} \left| \varepsilon_{n, X}(H_1, H_2) - (1 - \mu_n(\M_n(\Z_p)^{\neq 0}_{X}))P_{I_n}(H_1, H_2) \right| \\
\leq & \lim_{n \rightarrow \infty} (1-\mu_n(\M_n(\Z_p)^{\neq 0}_{X}))
\end{split}    
\end{equation*}
by Lemma \ref{lem32b} and the equation (\ref{eqnew1}). Taking the limit $X \rightarrow \infty$, Lemma \ref{lem32a} finishes the proof.
\end{proof}

The following proposition is a special case of Theorem \ref{thm32f}. Recall that $c_r(p) := \prod_{k=1}^{r} (1-p^{-k})$ and $c_{\infty}(p) := \prod_{k=1}^{\infty} (1-p^{-k})$. 

\begin{proposition} \label{prop32d}
Let $A_n \in \M_n(\Z_p)$ be a uniform random matrix for each $n$. Then we have
\begin{equation*}
\lim_{n \rightarrow \infty} \PP \begin{pmatrix}
\cok(A_n) \cong H_1 \text{ and} \\
\cok(A_n+pI_n) \cong H_2
\end{pmatrix} = \left\{\begin{matrix}
0 & (r_p(H_1) \neq r_p(H_2)) \\
\frac{p^{r^2} c_{\infty}(p)c_{r}(p)^2}{\left| \Aut(H_1) \right| \left| \Aut(H_2) \right|} & (r_p(H_1) = r_p(H_2) = r)
\end{matrix}\right.
\end{equation*}
for every $H_1, H_2 \in \GG_p$.
\end{proposition}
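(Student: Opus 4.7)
The starting observation is that $A_n$ and $A_n + pI_n$ have the same reduction modulo $p$, so in particular
$$
r_p(\cok(A_n)) = r_p(\cok(A_n + pI_n)) = n - r_p(\overline{A_n}).
$$
This immediately yields vanishing of the joint probability whenever $r_p(H_1) \neq r_p(H_2)$, disposing of that case.

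For the main case $r_p(H_1) = r_p(H_2) = r$, I would first invoke Lemma \ref{lem32c} to replace $pI_n$ by $pB_n$ with $B_n$ an independent uniform random matrix, since the two limiting probabilities coincide. The gain is a useful conditional independence: given $\overline{A_n} = X$, the matrix $A_n$ is uniform on the fiber $\{M \in \M_n(\Z_p) : \overline{M} = X\}$, and $A_n + pB_n$ is uniform on the same fiber and conditionally independent of $A_n$ (because given $A_n = M$, the law of $A_n + pB_n = M + pB_n$ is uniform on $M + p\M_n(\Z_p)$, which depends on $M$ only through $\overline{M}$). This gives the factorization
$$
\PP \begin{pmatrix}
\cok(A_n) \cong H_1 \text{ and} \\
\cok(A_n+pB_n) \cong H_2
\end{pmatrix}
= \sum_{X \in \M_n(\F_p)} \PP(\overline{A_n} = X) \, q_X(H_1) \, q_X(H_2),
$$
where $q_X(H) := \PP(\cok(A_n) \cong H \mid \overline{A_n} = X)$.

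Since multiplication by an element of $\GL_n(\Z_p) \times \GL_n(\Z_p)$ preserves both the Haar measure and the isomorphism class of the cokernel, $q_X(H)$ depends only on $r_p(X)$, and it vanishes unless $r_p(X) = n - r_p(H)$. Hence when $r_p(H_1) = r_p(H_2) = r$ only matrices of rank $n-r$ contribute, and after writing $q_X(H_i) = \PP(\cok(A_n) \cong H_i)/\PP(r_p(\overline{A_n}) = n - r)$ the identity collapses to
$$
\PP \begin{pmatrix}
\cok(A_n) \cong H_1 \text{ and} \\
\cok(A_n+pB_n) \cong H_2
\end{pmatrix}
= \frac{\PP(\cok(A_n) \cong H_1) \, \PP(\cok(A_n) \cong H_2)}{\PP(r_p(\overline{A_n}) = n - r)}.
$$

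Passing to the limit $n \to \infty$, Friedman--Washington supplies the numerator limit $c_\infty(p)^2/(|\Aut(H_1)|\,|\Aut(H_2)|)$. The remaining ingredient, which I view as the main technical point, is the identification
$$
\lim_{n \to \infty} \PP(r_p(\overline{A_n}) = n - r) = \frac{p^{-r^2} c_\infty(p)}{c_r(p)^2},
$$
which can be obtained either by a direct count of rank-$(n-r)$ matrices over $\F_p$ or, more conceptually, by summing the Friedman--Washington formula over $H \in \GG_p$ with $p$-rank equal to $r$ (interchange of limit and sum being justified by the rapid decay of $1/|\Aut(H)|$). Dividing the numerator limit by this denominator produces exactly $p^{r^2} c_\infty(p) c_r(p)^2 / (|\Aut(H_1)|\,|\Aut(H_2)|)$, as required.
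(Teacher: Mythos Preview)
Your argument is correct and, in fact, cleaner than the paper's. Both proofs begin identically: the vanishing case is forced by $r_p(\cok(A_n)) = r_p(\cok(A_n+pI_n))$, and Lemma~\ref{lem32c} replaces $pI_n$ by $pB_n$ with $B_n$ independent and Haar-uniform. From there the routes diverge. The paper conditions on the event $\cok(A)\cong H_1$, i.e.\ on $A\in[D_{\nu,n}]$, and then carries out an explicit block-matrix reduction to rewrite $\cok(D_{\nu,n}+pB)$ as $\cok(pB_1')$ for a uniform $B_1'\in\M_r(\Z_p)$; it then matches the target formula via the identity $\left|\Aut(pH_2)\right| = \left|\Aut(H_2)\right|/(p^{r^2}c_{r-t}(p))$ together with Friedman--Washington for $r\times r$ matrices. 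Your approach instead conditions on the common residue $\overline{A_n}=X$, under which $A_n$ and $A_n+pB_n$ become \emph{independent} uniform lifts of $X$; this yields, for every finite $n$, the exact identity
\[
\PP\!\begin{pmatrix}\cok(A_n)\cong H_1\\\cok(A_n+pB_n)\cong H_2\end{pmatrix}
=\frac{\PP(\cok(A_n)\cong H_1)\,\PP(\cok(A_n)\cong H_2)}{\PP(r_p(\overline{A_n})=n-r)},
\]
after which only the classical corank probability $\lim_n \PP(r_p(\overline{A_n})=n-r)=p^{-r^2}c_\infty(p)/c_r(p)^2$ is needed. Your route avoids the Smith-normal-form manipulations and the $\Aut(pH_2)$ computation entirely, and makes the conditional-independence mechanism responsible for the product form completely transparent; the paper's route, on the other hand, exhibits a concrete $r\times r$ model ($\cok(B_1')\cong pH_2$) that explains where the factor $c_r(p)^2$ originates combinatorially.
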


\begin{proof}
Since $\cok(A_n)$ and $\cok(A_n+pI_n)$ have the same $p$-rank, the case $r_p(H_1) \neq r_p(H_2)$ is clear. Now assume that $r_p(H_1) = r_p(H_2) = r$. By Lemma \ref{lem32c}, it is enough to show that
$$
\lim_{n \rightarrow \infty} \PP \begin{pmatrix}
\cok(A_n) \cong H_1 \text{ and} \\
\cok(A_n+pB_n) \cong H_2
\end{pmatrix} = \frac{p^{r^2} c_{\infty}(p)c_{r}(p)^2}{\left| \Aut(H_1) \right| \left| \Aut(H_2) \right|}
$$
for uniform random matrices $A_n, B_n \in \M_n(\Z_p)$. Denote $(A_n, B_n)=(A,B)$ for simplicity. 

Let $H_2$ be of type $\lambda$. By \cite[Lemma 7.2]{Woo17}, we have
$$
\left| \Aut(H_2) \right| = p^{\sum_{i=1}^{\lambda_1} (\lambda_i')^2} \prod_{i=1}^{\lambda_1} c_{\lambda_i'-\lambda_{i+1}'}(p).
$$
If $pH_2$ is of type $\mu$, then its conjugate is given by $\mu' = (\lambda_2' \geq \cdots \geq \lambda_{\lambda_1}')$ so we have
$$
\left| \Aut(pH_2) \right| = p^{\sum_{i=2}^{\lambda_1} (\lambda_i')^2}  \prod_{i=2}^{\lambda_1} c_{\lambda_i'-\lambda_{i+1}'}(p)
= \frac{\left| \Aut(H_2) \right|}{p^{r^2} c_{r-t}(p)}
$$
for $t = \lambda_2' = r_p(pH_2)$ again by \cite[Lemma 7.2]{Woo17}. This implies that
\begin{equation*}
\begin{split}
\frac{p^{r^2} c_{\infty}(p)c_{r}(p)^2}{\left| \Aut(H_1) \right| \left| \Aut(H_2) \right|} 
& = \frac{c_{\infty}(p)}{\left| \Aut(H_1) \right|} \cdot 
\frac{1}{\left| \Aut(pH_2) \right|}\frac{c_{r}(p)^2}{c_{r-t}(p)} \\
& = \lim_{n \rightarrow \infty} \PP (\cok(A) \cong H_1) \times \PP (\cok(C) \cong pH_2)
\end{split}
\end{equation*}
for a uniform random matrix $C \in \M_r(\Z_p)$ by \cite[Proposition 1]{FW89}. If $H_1$ is of type $\nu$, then we have $\ell (\nu) = r$. It remains to show that the conditional probability
\begin{equation*}
\begin{split}
& \PP(\cok(A+pB) \cong H_2 \mid \cok(A) \cong H_1) \\
= \, & \PP(\cok(A+pB) \cong H_2 \mid A \in [D_{\nu, n}]) \\
= \, & \PP(\cok(D_{\nu, n}+pB) \cong H_2)
\end{split}
\end{equation*}
is equal to the probability $\PP (\cok(C) \cong pH_2)$ for every $n > r$. 

For a uniform random matrix $B = \begin{pmatrix}
B_1 & B_2 \\
B_3 & B_4 \\
\end{pmatrix} \in \M_{r+(n-r)}(\Z_p)$, we have
\begin{equation*}
\small
\begin{split}
\cok(D_{\nu, n}+pB)
& = \cok \left ( \begin{pmatrix}
p(B_1+D) & pB_2 \\
pB_3 & pB_4+I_{n-r} \\
\end{pmatrix} \right ) \;\; (D = p^{-1}D_{\nu} \in \M_r(\Z_p))) \\
& \cong \cok \left ( \begin{pmatrix}
pB_1' & O \\
pB_3 & pB_4+I_{n-r} \\
\end{pmatrix} \right ) \;\; (B_1' = B_1+D-pB_2(pB_4+I_{n-r})^{-1}B_3) \\
& \cong \cok(pB_1')
\end{split}    
\end{equation*}
and $B_1' \in \M_r(\Z_p)$ is also a uniform random matrix since $B_1$ is uniform and the distributions of $B_2$, $B_3$, $B_4$ and $D$ are independent to the distribution of $B_1$.
Thus we have
\begin{equation*}
\PP(\cok(D_{\nu, n}+pB) \cong H_2)
= \PP(\cok(pB_1') \cong H_2) 
= \PP(\cok(B_1') \cong pH_2),
\end{equation*}
which finishes the proof.
\end{proof}

For a prime $p$ and integers $r_1, r_2 \geq 0$, let $p_1 : \F_p^{r_1} \times \F_p^{r_2} \rightarrow \F_p^{r_1}$ and $p_2 : \F_p^{r_1} \times \F_p^{r_2} \rightarrow \F_p^{r_2}$ be projections and $N(r_1, r_2)$ be the number of the $\F_p$-subspaces $W$ of $\F_p^{r_1} \times \F_p^{r_2}$ such that $p_1(W) = \F_p^{r_1}$ and $p_2(W) = \F_p^{r_2}$. 

\begin{proposition} \label{prop32e}
Let $A_n \in \M_n(\Z_p)$ be as in Theorem \ref{thm1a}. Then we have
\begin{equation*}
\lim_{n \rightarrow \infty} \EE(\# \Sur(\cok(A_n), H_1) \# \Sur(\cok(A_n + pI_n), H_2)) 
= N(r_p(H_1), r_p(H_2))
\end{equation*}
for every $H_1, H_2 \in \GG_p$.
\end{proposition}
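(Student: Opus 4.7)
The plan is to unfold the mixed moment into a sum over pairs of surjections $F = (F_1, F_2) : V \to H_1 \times H_2$, partition by the image $G = \im F$, reduce each piece to a moment computation of \cite{NW22} type, and finally recognize the remaining count of admissible images as $N(r_p(H_1), r_p(H_2))$.

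First, I choose $k \geq 1$ so that $p^k H_1 = p^k H_2 = 0$, set $R = \Z/p^k\Z$ and $V = W = R^n$, let $X_n \in \M_n(R)$ be the reduction of $A_n$ modulo $p^k$, and write $v_i = X_n e_i$. Proceeding as in the proof of Proposition \ref{prop31b}, the mixed moment equals
$$
\sum_{\substack{F_1 \in \Sur_R(V, H_1) \\ F_2 \in \Sur_R(V, H_2)}} \prod_{i=1}^{n} \PP\left(F_1 v_i = 0 \text{ and } F_2 v_i = -p F_2 e_i\right) = \sum_{F} \PP(F X_n = A_F),
$$
where $A_F \in \Hom(W, H_1 \times H_2)$ is defined by $A_F e_i = (0, -p F_2 e_i)$.

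Next, I partition by the image $G = \im F$. Let $\LL$ be the set of subgroups of $H_1 \times H_2$ both of whose coordinate projections are surjective, and let $\LL^p \subseteq \LL$ consist of those $G$ containing $\{0\} \times p H_2$. Since $F_2$ is surjective, the vectors $\{p F_2 e_i\}_i$ generate $p H_2$, so $\im A_F \subseteq G$ if and only if $G \in \LL^p$; otherwise the summand is zero. Identifying $\{F : \im F = G\}$ with $\Sur_R(V, G)$, the moment becomes
$$
\sum_{G \in \LL^p} \sum_{F \in \Sur_R(V, G)} \PP(F X_n = A_F).
$$

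The main analytic step is to show that for each fixed $G \in \LL^p$,
$$
\lim_{n \to \infty} \sum_{F \in \Sur_R(V, G)} \PP(F X_n = A_F) = 1.
$$
This is the $u = 0$ instance of \cite[Theorem 4.12]{NW22} applied to the target group $G$, with an $F$-dependent right-hand side $A_F \in \Hom(W, G)$. Exactly as in the proof of Proposition \ref{prop31b}, the argument of \cite{NW22} passes through the code-distance bound \cite[Lemma 4.11]{NW22}, which estimates $\PP(F X = A)$ uniformly in $A \in \Hom(W, G)$; this is the sole step requiring adaptation, and it extends verbatim.

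It remains to verify $|\LL^p| = N(r_p(H_1), r_p(H_2))$. Writing $r_j = r_p(H_j)$, let $\psi : H_1 \times H_2 \twoheadrightarrow \F_p^{r_1} \times \F_p^{r_2}$ denote the mod-$p$ reduction, with kernel $p H_1 \times p H_2$. For $G \in \LL^p$, setting $G_1 = G \cap (H_1 \times \{0\})$ and $G_2 = G \cap (\{0\} \times H_2)$, the graph decomposition identifies $G / (G_1 + G_2)$ with the graph of an isomorphism $H_1 / G_1 \cong H_2 / G_2$. Since $G_2 \supseteq p H_2$, the quotient $H_2/G_2$ is elementary abelian, hence so is $H_1/G_1$, giving $G_1 \supseteq p H_1$ and thus $G \supseteq \ker \psi$. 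Consequently $\psi$ induces a bijection between $\LL^p$ and the set of $\F_p$-subspaces $W \leq \F_p^{r_1} \times \F_p^{r_2}$ with both projections surjective, of cardinality $N(r_1, r_2)$ by definition. The main obstacle is adapting \cite[Theorem 4.12]{NW22} to $F$-dependent targets $A_F$, but as noted this modification is routine.
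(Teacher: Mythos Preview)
Your proof is correct and follows essentially the same route as the paper: both expand the mixed moment as a sum over $F=(F_1,F_2)$, partition by $G=\im F$, discard the $G$ for which the target $(0,-pF_2e_i)$ cannot lie in $G$, apply the \cite[Theorem 4.12]{NW22} argument to each surviving $G$ to get limit $1$, and identify the surviving set of $G$'s with the subspaces counted by $N(r_p(H_1),r_p(H_2))$ via reduction modulo $p$. The only cosmetic difference is that the paper immediately imposes both conditions $pH_1\le G_1$ and $pH_2\le G_2$ (deriving the first from $v_i+pe_i\in\ker F_2$), whereas you initially impose only $\{0\}\times pH_2\subseteq G$ and recover $pH_1\le G_1$ later via Goursat's lemma; the resulting sets $\LL^p$ and $\LL_1$ coincide.
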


\begin{proof}
We use the notation in the proof of Proposition \ref{prop31d}, with an additional condition that $u=0$. Define $r_j = r_p(H_j)$ and
\begin{equation*}
\LL_1 = \left\{ G \in \LL \mid pH_1 \leq G_1 \text{ and } pH_2 \leq G_2 \right\}.
\end{equation*}
Let $F=(F_1, F_2) \in S_G$ for $G \in \LL$ and assume that there are $v_1, \hdots, v_n \in V$ such that $F_1v_i = 0$ and $F_2v_i = -pF_2e_i$ for each $i$. Then $pF_2e_i \in F_2(\ker F_1) = G_2$ for each $i$ so $pH_2 \leq G_2$. By the same reason, we have $pH_1 \leq G_1$ so $G \in \LL_1$. Thus we have
\begin{equation*}
\begin{split}
& \EE(\# \Sur(\cok(A_n), H_1) \# \Sur(\cok(A_n + pI_n), H_2)) \\
= \, & \sum_{\substack{F_1 \in \Sur_R(V, H_1) \\ F_2 \in \Sur_R(V, H_2)}} \prod_{i=1}^{n} \PP (F_1 v_i = 0 \text{ and } F_2 v_i = -pF_2 e_i) \\
= \, & \sum_{G \in \LL_1} \sum_{F \in S_G} \prod_{i=1}^{n} \PP (Fv_i = (0, -pF_2 e_i)).
\end{split}    
\end{equation*}
Define $U_F \in \Hom(W, H_1 \times H_2)$ by $U_Fe_i = (0, -pF_2e_i)$ for each $F \in S_G$. 
Following the proof of \cite[Theorem 4.12]{NW22a} as in Proposition \ref{prop31b} and \ref{prop31d}, we can prove that for each $G \in \LL_1$ there are constants $c_{G, 2}, K_{G, 2}>0$ (which are independent of $n$) such that
$$
\left | \sum_{F \in \Sur_R(V, G)} \prod_{i=1}^{n} \PP (Fv_i = (0, -pF_2 e_i)) - 1 \right |
= \left | \sum_{F \in \Sur_R(V, G)} \PP(FA_n' = U_F) - 1 \right | \le K_{G,2}n^{-c_{G,2}}
$$
for every $n$ and an $\alpha_n$-balanced random matrix $A_n' \in \M_{n}(R)$. Now we have
\begin{equation*} 
\lim_{n \rightarrow \infty} \sum_{F \in S_G} \prod_{i=1}^{n} \PP (Fv_i = (0, -pF_2 e_i)) 
= \lim_{n \rightarrow \infty} \sum_{F \in \Sur_R(V, G)} \prod_{i=1}^{n} \PP (Fv_i = (0, -pF_2 e_i))
= 1
\end{equation*}
for each $G \in \LL_1$. Since the map
$$
\LL_1 \rightarrow \left\{ W \leq \F_p^{r_1} \times \F_p^{r_2} \mid p_1(W) = \F_p^{r_1}, \, p_2(W) = \F_p^{r_2} \right\} \;\; (G \mapsto G/(pH_1 \times pH_2))
$$
is a bijection, we have $\left| \LL_1 \right| = N(r_1, r_2)$. This finishes the proof.
\end{proof}

\begin{theorem} \label{thm32f}
Let $A_n$ be as in Theorem \ref{thm1a}. Then we have
\begin{equation} \label{eq32e}
\lim_{n \rightarrow \infty} \PP \begin{pmatrix}
\cok(A_n) \cong H_1 \text{ and} \\
\cok(A_n+pI_n) \cong H_2
\end{pmatrix} = \left\{\begin{matrix}
0 & (r_p(H_1) \neq r_p(H_2)) \\
\frac{p^{r^2} c_{\infty}(p)c_{r}(p)^2}{\left| \Aut(H_1) \right| \left| \Aut(H_2) \right|} & (r_p(H_1) = r_p(H_2) = r)
\end{matrix}\right.
\end{equation}
for every $H_1, H_2 \in \GG_p$.
\end{theorem}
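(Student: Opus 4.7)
The plan is to deduce Theorem \ref{thm32f} from Theorem \ref{thm1c} by a mixed-moment comparison with the uniform case. I set $X_n = (\cok(A_n), \cok(A_n + pI_n))$ and let $Y = (Y_1, Y_2)$ be the random pair of finite abelian $p$-groups whose joint distribution is given by the right-hand side of (\ref{eq32e}).

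The first step is to verify that $X_n$ and $Y$ have the same limiting mixed moments. For $X_n$, this is precisely Proposition \ref{prop32e}, yielding
\[
\lim_{n \to \infty} \EE(\# \Sur(\cok(A_n), G_1) \# \Sur(\cok(A_n + pI_n), G_2)) = N(r_p(G_1), r_p(G_2))
\]
for every $G_1, G_2 \in \GG_p$. For $Y$, I note that a uniform random matrix in $\M_n(\Z_p)$ is $(1 - 1/p)$-balanced and thus satisfies the hypothesis of Theorem \ref{thm1a}, so Proposition \ref{prop32e} applies equally in the uniform case. Combined with Proposition \ref{prop32d}, which identifies the limiting joint distribution in the uniform case with the law of $Y$, this shows $\EE(\# \Sur(Y_1, G_1) \# \Sur(Y_2, G_2)) = N(r_p(G_1), r_p(G_2))$.

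The second step is to verify the polynomial growth hypothesis of Theorem \ref{thm1c}, namely $N(r_p(G_1), r_p(G_2)) = O(m(G_1) m(G_2))$ uniformly in $G_1, G_2 \in \GG_p$. Writing $r_i = r_p(G_i)$, the definition of $m$ gives $m(G_i) \geq p^{r_i^2/2}$, hence $m(G_1) m(G_2) \geq p^{(r_1^2 + r_2^2)/2} \geq p^{r_1 r_2}$ by AM-GM. On the other hand, a Goursat-type decomposition yields
\[
N(r_1, r_2) = \sum_{k=0}^{\min(r_1, r_2)} \binom{r_1}{k}_p \binom{r_2}{k}_p |\GL_k(\F_p)|,
\]
and a short computation shows this sum is $O(p^{r_1 r_2})$ with an absolute constant, since the summands form a geometric-like progression peaking at $k = \min(r_1, r_2)$ with consecutive ratios bounded below by a fixed power of $p$.

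With both hypotheses of Theorem \ref{thm1c} verified (applied with $P = \{p\}$ and $r = 2$), the joint distribution of $X_n$ converges to that of $Y$, giving (\ref{eq32e}). The main technical obstacle is already handled inside Proposition \ref{prop32e}, whose proof adapts the techniques of \cite{NW22} along the lines of Proposition \ref{prop31b} to the present dependent setting; the deduction above is then essentially formal bookkeeping via the universality theorem \ref{thm1c}, with the only auxiliary check being the above moment growth bound.
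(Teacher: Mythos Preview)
Your proposal is correct and follows the same approach as the paper: combine Theorem~\ref{thm1c} with Propositions~\ref{prop32d} and~\ref{prop32e} after verifying the growth bound $N(r_p(G_1),r_p(G_2))=O(m(G_1)m(G_2))$. The only cosmetic difference is in that last check: the paper bounds $N(r_1,r_2)$ crudely by the total number of subspaces of $\F_p^{\,r_1+r_2}$ to obtain $O(p^{(r_1^2+r_2^2)/2})$ directly, whereas you use the Goursat decomposition to get the sharper $O(p^{r_1 r_2})$ and then invoke AM--GM.
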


\begin{proof}
The number $N(r_1, r_2)$ is bounded above by the number of subspaces of $\F_p^{r_1+r_2}$, so
$$
N(r_1, r_2)
\leq \sum_{k=0}^{r_1+r_2} \prod_{i=0}^{k-1} \frac{p^{r_1+r_2}-p^i}{p^k - p^i}
= O\left ( \sum_{k=0}^{r_1+r_2} p^{k(r_1+r_2-k)} \right )
= O(p^{\frac{r_1^2+r_2^2}{2}}).
$$
Recall that $m(H_j) = p^{\sum_{i} \frac{\lambda(j)_i'^2}{2}}$ if $H_j$ is of type $\lambda(j)$. Since $\lambda(j)_1' = r_p(H_j)$, we have 
$$
N(r_p(H_1), r_p(H_2)) = O(m(H_1)m(H_2))
$$
so Theorem \ref{thm1c} can be applied to the mixed moments $N(r_p(H_1), r_p(H_2))$. Now the equation (\ref{eq32e}) follows from Theorem \ref{thm1c}, Proposition \ref{prop32d} and \ref{prop32e}.
\end{proof}

\section{Joint distribution of random groups} \label{Sec4}

In this section, we compute the joint distribution of random non-abelian groups. First we compute the moments of the random group $X_u \in \GG$ whose probability distribution is defined by the measure $\mu_u$ given in Theorem \ref{thm1h}. The moments of $X_0$ were computed in \cite[Lemma 3.20]{Woo22}, following the strategy of \cite[Theorem 6.2]{LWZ19}. The proof for an arbitrary $u$ is almost identical, so we only give a sketch here.

\begin{lemma} \label{lem4a}
Let $u \geq 0$ be an integer and $X_u$ be the random group defined as above. Then for any finite group $H$, we have
$$
\EE (\# \Sur(X_u, H)) = \left| H \right|^{-u}.
$$
\end{lemma}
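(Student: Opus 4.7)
The plan is to follow the strategy of \cite[Theorem 6.2]{LWZ19} and \cite[Lemma 3.20]{Woo22} essentially verbatim, with the only change being that $n$ relations become $n+u$. First I would compute the moments of the finite-$n$ approximations explicitly. A surjection $F_n/\left< r_1, \ldots, r_{n+u} \right> \rightarrow H$ lifts uniquely to a continuous surjection $\phi : F_n \rightarrow H$ with $\phi(r_i) = 1$ for every $i$, so
$$
\EE \left ( \# \Sur(F_n/\left< r_1, \ldots, r_{n+u} \right>, H) \right ) = \sum_{\phi \in \Sur(F_n, H)} \PP (\phi(r_1) = \cdots = \phi(r_{n+u}) = 1).
$$
For any fixed $\phi \in \Sur(F_n, H)$ the pushforward of Haar measure on $F_n$ under $\phi$ is Haar measure on $H$; since the $r_i$ are independent, each event $\phi(r_i) = 1$ has probability $|H|^{-1}$ and they are mutually independent. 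This gives
$$
\EE \left ( \# \Sur(F_n/\left< r_1, \ldots, r_{n+u} \right>, H) \right ) = \# \Sur(F_n, H) \cdot |H|^{-(n+u)}.
$$

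Next, $\# \Sur(F_n, H) / |H|^n$ equals the probability that $n$ independent uniform random elements of $H$ generate $H$. A standard union bound over maximal proper subgroups $M \lneq H$ yields
$$
1 - \# \Sur(F_n, H) / |H|^n \leq \sum_{M \lneq H \text{ maximal}} [H:M]^{-n} \longrightarrow 0,
$$
so the finite-$n$ moments converge to $|H|^{-u}$ as $n \to \infty$.

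The main obstacle is promoting this limit of finite-$n$ moments to the moment against the limiting measure $\mu_u$ itself. The function $G \mapsto \# \Sur(G, H)$ on $\GG$ is continuous (it depends only on $G^{\{H\}}$) but unbounded, so the weak convergence of Theorem \ref{thm1h} does not automatically transfer moments. To close this gap I would mirror \cite[Lemma 3.20]{Woo22}: use the explicit formula for $\mu_u$ on basic open sets from \cite[Equation (3.2)]{LW20} to write $\EE_{\mu_u}(\# \Sur(X_u, H))$ as an absolutely convergent sum over isomorphism classes of level-$\CC$ quotients for a suitable finite $\CC$ containing $H$, and then rearrange to recover $|H|^{-u}$. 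Since this is precisely the $u = 0$ calculation with the exponent on $|H|$ shifted by $u$, no genuinely new difficulty appears.
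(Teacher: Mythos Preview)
Your proposal is correct and follows essentially the same approach as the paper: both compute the finite-$n$ moments $\EE(\#\Sur(Z_n,H))=\#\Sur(F_n,H)\,|H|^{-(n+u)}\to|H|^{-u}$, identify the passage to the limiting measure $\mu_u$ as the only nontrivial step, and handle it by mirroring \cite[Lemma 3.20]{Woo22}. The paper's execution of that last step is slightly more specific than your description---it decomposes the moment along the partition $\{Z_n^{\CC_\ell}\cong G\}$, writes $f_n(G,\ell)=g_n(G,\ell)P_{u,n}(U_{\CC_\ell,G})$, and invokes \cite[Lemma 5.10]{LWZ19} to justify the limit/sum interchange---but this is exactly the content of the argument you are pointing to.
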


\begin{proof}
Let $Z_n$ be the random profinite group $F_n/\left< r_1, \hdots, r_{n+u} \right>$, where $r_i$ are independent Haar random elements of $F_n$. For any positive integer $\ell$, let $\CC_{\ell}$ be the set of finite groups of order at most $\ell$ and $G$ be a finite group of level-$\CC_{\ell}$. We define
$$
f_n(G, \ell) = \EE(\# \Sur(Z_n, H) \times \mathbf{1}_{Z_n^{\CC_{\ell}} \cong G}),
$$
where $\mathbf{1}_{Z_n^{\CC_{\ell}} \cong G}$ denotes the indicator function of $Z_n^{\CC_{\ell}} \cong G$. Now we check the conditions given in \cite[Lemma 5.10]{LWZ19} are satisfied. Following the proof of \cite[Lemma 3.20]{Woo22}, we have
$$
f_n(G, \ell) = g_n(G, \ell) P_{u, n}(U_{\CC_{\ell}, G})
$$
where $P_{u, n}(U_{\CC_{\ell}, G})$ is defined as in \cite[p. 146]{LW20} and
\begin{equation*}
\small
\begin{split}
g_n(G, \ell) & = \frac{\left| H^{\CC_{\ell}} \right|^{n+u}}{\left| H \right|^{n+u}} \sum_{\phi \in \Sur(F_n, H)}\frac{\# \left\{ (\tau, \pi) \in \Sur(F_n^{\CC_{\ell}}, G) \times \Sur(G, H^{\CC_{\ell}}) \mid \pi \circ \tau = \overline{\phi} \right\}}{\left| \Aut(G) \right| \left| G \right|^{n+u}} \\
& = \frac{\left| H^{\CC_{\ell}} \right|^{n+u} }{\left| \Aut(G) \right| \left| G \right|^{n+u} \left| H \right|^{n+u}} 
\sum_{\substack{\tau \in \Sur(F_n^{\CC_{\ell}}, G) \\ \pi \in \Sur(G, H^{\CC_{\ell}})}} \# \Sur(\pi \circ \tau, \pi_H).
\end{split}    
\end{equation*}
($\Sur(\pi \circ \tau, \pi_H)$ denotes the set of $\phi \in \Sur(F_n, H)$ which induces $\pi \circ \tau \in \Sur(F_n^{\CC_{\ell}}, H^{\CC_{\ell}})$.) We have
$$
g(G, \ell) := \lim_{n \rightarrow \infty} g_n(G, \ell) = \frac{\left| H^{\CC_{\ell}} \right|^{u} \# \Sur(G, H^{\CC_{\ell}})}{\left| \Aut(G) \right| \left| G \right|^{u} \left| H \right|^{u}}
$$
and $g_n(G, \ell) \leq g(G, \ell)$ for every $n$ so the condition (2) is satisfied. The condition (3) follows from the definition of $f_n(G, \ell)$. Now \cite[Lemma 5.10]{LWZ19} implies that
\begin{equation} \label{eq41a}
\sum_{\substack{G \\ G^{\CC_{\ell}} \cong G}} \lim_{n \rightarrow \infty} f_n(G, \ell) 
= \lim_{n \rightarrow \infty} f_n(\text{trivial group}, 1)
= \lim_{n \rightarrow \infty} \EE(\# \Sur(Z_n, H))
= \left| H \right|^{-u}.
\end{equation}
For a sufficiently large $\ell$ such that $H^{\CC_{\ell}} \cong H$, we have
\begin{equation} \label{eq41b}
\begin{split}
\lim_{n \rightarrow \infty} f_n(G, \ell)
& = \lim_{n \rightarrow \infty} \# \Sur(G, H) \PP(Z_n^{\CC_{\ell}} \cong G) \\
& = \# \Sur(G, H) \PP(X_u^{\CC_{\ell}} \cong G)
\end{split}    
\end{equation}
by Theorem \ref{thm1h}. Now the equations (\ref{eq41a}) and (\ref{eq41b}) imply that $\EE(\# \Sur(X_u, H)) = \left| H \right|^{-u}$.
\end{proof}

\begin{lemma} \label{lem4b}
Let $H_1$ and $H_2$ be finite groups. For $G_1 \trianglelefteq H_1$ and $G_2 \trianglelefteq H_2$, define
    $$
    S_{G_1, G_2} := \begin{Bmatrix}
(\phi_1, \phi_2) \in \Sur(F_n, H_1) \times \Sur(F_n, H_2) \mid \\
\phi_1(\ker \phi_2) = G_1 \text{ and } \phi_2(\ker \phi_1) = G_2
\end{Bmatrix}.
    $$
    Then $\left| S_{G_1, G_2} \right| \leq \left| H_1 \right|^n \left| G_2 \right|^n \left| H_2 \right|^r$ for $r = \rank(H_1)$.
\end{lemma}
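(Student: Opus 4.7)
The plan is to bound $|S_{G_1, G_2}|$ by counting $\phi_1$ and $\phi_2$ sequentially, dropping the equality $\phi_2(\ker \phi_1) = G_2$ to the weaker containment $\phi_2(\ker \phi_1) \subseteq G_2$ and ignoring the condition on $G_1$ entirely. First, since $F_n$ is free on $n$ generators, the number of $\phi_1 \in \Sur(F_n, H_1)$ is at most $|\Hom(F_n, H_1)| = |H_1|^n$.

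For a fixed such $\phi_1$, I would use the normality $G_2 \trianglelefteq H_2$ to form the quotient $\pi : H_2 \to H_2/G_2$, and observe that the condition $\phi_2(\ker \phi_1) \subseteq G_2$ is equivalent to $\pi \circ \phi_2 : F_n \to H_2/G_2$ being trivial on $\ker \phi_1$. Because $\phi_1$ is surjective with kernel $\ker \phi_1$, this in turn is equivalent to $\pi \circ \phi_2$ factoring as $\psi \circ \phi_1$ for a unique $\psi \in \Hom(H_1, H_2/G_2)$. So the count splits as (number of $\psi$) times (number of lifts $\phi_2$ of $\psi \circ \phi_1$ along $\pi$).

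Since $\rank(H_1) = r$, a homomorphism out of $H_1$ is determined by its values on an $r$-element generating set, giving at most $|H_2/G_2|^r$ choices of $\psi$. For each $\psi$, lifting $\psi \circ \phi_1$ to a map $F_n \to H_2$ amounts to choosing, for each of the $n$ free generators $e_i$, an element of the $G_2$-coset $\pi^{-1}(\psi \phi_1(e_i))$, giving exactly $|G_2|^n$ lifts. Multiplying together yields
\[
|S_{G_1, G_2}| \leq |H_1|^n \cdot |H_2/G_2|^r \cdot |G_2|^n = |H_1|^n \, |H_2|^r \, |G_2|^{n-r} \leq |H_1|^n \, |G_2|^n \, |H_2|^r,
\]
which is the claimed bound (in fact slightly stronger, by a factor of $|G_2|^r$). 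There is no real obstacle here; the only nontrivial step is recognizing that normality of $G_2$ in $H_2$ lets us replace the subgroup condition $\phi_2(\ker \phi_1) \subseteq G_2$ by a factoring condition through $H_1$, at which point the rank hypothesis on $H_1$ does its work.
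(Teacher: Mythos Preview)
Your proof is correct and reaches the same bound, but the mechanism for counting the $\phi_2$ compatible with a fixed $\phi_1$ differs from the paper's. The paper argues concretely: it picks lifts $y_1,\dots,y_r\in F_n$ of a generating set of $H_1$ under $\phi_1$, then writes each free generator $x_i$ as $k_i z_i$ with $k_i\in\ker\phi_1$ and $z_i\in\langle y_1,\dots,y_r\rangle$, so that $F_n=\langle k_1,\dots,k_n,y_1,\dots,y_r\rangle$ and $\phi_2$ is determined by $\phi_2(k_i)\in G_2$ and $\phi_2(y_j)\in H_2$, giving at most $|G_2|^n|H_2|^r$ choices. You instead pass to the quotient $H_2/G_2$ and recast the containment $\phi_2(\ker\phi_1)\subseteq G_2$ as a factoring $\pi\circ\phi_2=\psi\circ\phi_1$ through $H_1$, then bound $|\Hom(H_1,H_2/G_2)|\le|H_2/G_2|^r$ and count lifts exactly as $|G_2|^n$. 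Your route is cleaner---it avoids constructing auxiliary elements in $F_n$ and makes the role of normality of $G_2$ explicit---and it yields the sharper bound $|H_1|^n|G_2|^{n-r}|H_2|^r$, which is indeed stronger by a factor of $|G_2|^r$. The paper's element-level argument, on the other hand, is reused later (in Proposition~\ref{prop4c}) where one needs a generating set adapted to $\ker\phi_1$; your factoring argument would require a small additional observation there but would still work.
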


\begin{proof}
For a fixed $\phi_1 \in \Sur(F_n, H_1)$, we bound the number of $\phi_2 \in \Sur(F_n, H_2)$ such that $(\phi_1, \phi_2) \in S_{G_1, G_2}$. Choose a generating set $\left\{ u_1, \hdots, u_r \right\}$ of a group $H_1$ and $y_j \in F_n$ such that $\phi_1(y_j)=u_j$ for $1 \leq j \leq r$. Let $\left\{ x_1, \hdots, x_n \right\}$ be a generating set of $F_n$. Since 
$$
\phi_1(x_i) \in H_1 = \left< \phi_1(y_1), \hdots, \phi_1(y_r) \right>,
$$
we can write $x_i = k_i z_i$ for some $k_i \in \ker \phi_1$ and $z_i \in \left< y_1, \hdots, y_r \right>$. Now we have
$$
F_n 
= \left< x_1, \hdots, x_n \right> 
\leq \left< k_1, \hdots, k_n, z_1, \hdots, z_n \right> 
\leq \left< k_1, \hdots, k_n, y_1, \hdots, y_r \right> 
$$
so $\phi_2$ is determined by $\phi_2(k_1), \hdots, \phi_2(k_n) \in G_2$ and $\phi_2(y_1), \hdots, \phi_2(y_r) \in H_2$. This implies that the number of possible choices of $\phi_2$ such that $(\phi_1, \phi_2) \in S_{G_1, G_2}$ (for a fixed $\phi_1$) is bounded above by $\left| G_2 \right|^n \left| H_2 \right|^r$. Since the number of possible choices of $\phi_1$ is bounded above by $\left| H_1 \right|^n$, we obtain that $\left| S_{G_1, G_2} \right| \leq \left| H_1 \right|^n \left| G_2 \right|^n \left| H_2 \right|^r$. 
\end{proof}

For $\phi_1 \in \Sur(F_n, H_1)$ and $\phi_2 \in \Sur(F_n, H_2)$, it is easy to show that $\phi_1(\ker \phi_2)$ is a normal subgroup of $H_1$ and $\phi_2(\ker \phi_1)$ is a normal subgroup of $H_2$. We also have
\begin{equation*}
\begin{split}
H_1 / \phi_1(\ker \phi_2) 
& \cong (F_n / \ker \phi_1)/(\left< \ker \phi_1, \ker \phi_2 \right> / \ker \phi_1) \\
& \cong (F_n / \ker \phi_2)/(\left< \ker \phi_1, \ker \phi_2 \right> / \ker \phi_2) \\
& \cong H_2 / \phi_2(\ker \phi_1) 
\end{split}    
\end{equation*}
so the set $S_{G_1, G_2}$ is empty if $H_1/G_1$ and $H_2/G_2$ are not isomorphic. The following proposition is a non-abelian analogue of Proposition \ref{prop31d}.

\begin{proposition} \label{prop4c}
Let $u \geq 0$ be an integer, $r_1, \hdots, r_{n+u}$ be independent uniform random elements of $F_n$ and $b_{n, 1}, \hdots, b_{n, n+u}$ be given elements of $F_n$ for each $n$. Assume that $\lim_{n \rightarrow \infty} d_n = \infty$, where $d_n$ is the maximum size of a subset $S \subset \left< b_{n, 1}, \hdots, b_{n, n+u} \right>$ which can be extended to a generating set of $F_n$. Then for any finite groups $H_1$ and $H_2$, we have
\small
\begin{align*}
& \lim_{n \rightarrow \infty} \EE( \# \Sur(F_n/\left< r_1, \hdots, r_{n+u} \right>, H_1) \# \Sur(F_n/\left< r_1b_{n,1}, \hdots, r_{n+u}b_{n, n+u} \right>, H_2) ) \\
= \, & \frac{1}{\left| H_1 \right|^{u} \left| H_2 \right|^{u}}. 
\end{align*}
\end{proposition}

\begin{proof}
To ease the notation, we write $b_{n, i}$ as $b_i$. Since $r_1, \hdots, r_{n+u}$ are independent, we have
\begin{equation*} 
\begin{split}
& \EE( \# \Sur(F_n/\left< r_i \right>, H_1) \# \Sur(F_n/\left< r_i b_i \right>, H_2) ) \\
= \, & \sum_{\substack{\phi_1 \in \Sur(F_n, H_1) \\ \phi_2 \in \Sur(F_n, H_2)}} \prod_{i=1}^{n+u} \PP(\phi_1 (r_i) = 1 \text{ and } \phi_2 (r_i) = \phi_2(b_i)^{-1}).
\end{split}    
\end{equation*}
Assume that there are $r_1, \hdots, r_{n+u} \in F_n$ such that
$\phi_1 (r_i) = 1$ and $\phi_2 (r_i) = \phi_2(b_i)^{-1}$ for each $i$. Then $\phi_2(b_i) = \phi_2(r_i)^{-1} \in \phi_2(\ker \phi_1)$ so $\phi_2(\left< b_i \right>) \leq \phi_2(\ker \phi_1)$. 
Similarly, $\phi_1(r_ib_i) = \phi_1(b_i)$ and $\phi_2(r_ib_i)=1$ so $\phi_1(\left< b_i \right>) \leq \phi_1(\ker \phi_2)$. 
If we define
$$
S_{G_1, G_2}^b := \left\{ (\phi_1, \phi_2) \in S_{G_1, G_2} \mid \phi_1(\left< b_i \right>) \leq G_1 \text{ and } \phi_2(\left< b_i \right>) \leq G_2 \right\},
$$
then we have
\begin{align*} 
& \sum_{\substack{\phi_1 \in \Sur(F_n, H_1) \\ \phi_2 \in \Sur(F_n, H_2)}} \prod_{i=1}^{n+u} \PP(\phi_1 (r_i) = 1 \text{ and } \phi_2 (r_i) = \phi_2(b_i)^{-1}) \\
= \, & \sum_{\substack{G_1, G_2 \\ H_1/G_1 \cong H_2/G_2}} \sum_{(\phi_1, \phi_2) \in S_{G_1, G_2}^{b}} \prod_{i=1}^{n+u} \PP(\phi_1 (r_i) = 1 \text{ and } \phi_2 (r_i) = \phi_2(b_i)^{-1}) \\
= \, & \sum_{\substack{G_1, G_2 \\ H_1/G_1 \cong H_2/G_2}} \sum_{(\phi_1, \phi_2) \in S_{G_1, G_2}^{b}} \prod_{i=1}^{n+u} \frac{1}{\left| H_1 \right|} \PP( \phi_2 (k_i) = \phi_2(b_i)^{-1}) \;\; (k_i \in \ker \phi_1) \\
= \, & \sum_{\substack{G_1, G_2 \\ H_1/G_1 \cong H_2/G_2}} \frac{\left| S_{G_1, G_2}^{b} \right|}{\left| H_1 \right|^{n+u} \left| G_2 \right|^{n+u}}.
\end{align*}
Now we bound the size of the set $S_{G_1, G_2}^{b}$ for each $(G_1, G_2)$. Let $r = \rank(H_1)$.
\begin{enumerate}
    \item $(G_1, G_2)=(H_1, H_2)$. Since $S_{H_1, H_2}^{b} = S_{H_1, H_2}$ and
    $$
    \bigsqcup_{\substack{G_1, G_2 \\ H_1/G_1 \cong H_2/G_2}}  S_{G_1, G_2} = \Sur(F_n, H_1) \times \Sur(F_n, H_2),
    $$
    we can obtain a lower bound of $\left| S_{H_1, H_2}^{b} \right|$ from upper bounds of $\left| S_{G_1, G_2} \right|$ for every $(G_1, G_2) \neq (H_1, H_2)$ such that $H_1/G_1 \cong H_2/G_2$. By Lemma \ref{lem4b}, we have
\begin{align*}
& \, \sum_{\substack{(G_1, G_2) \neq (H_1, H_2) \\ H_1/G_1 \cong H_2/G_2}} \left| S_{G_1, G_2} \right| \\
\leq & \, \sum_{\substack{(G_1, G_2) \neq (H_1, H_2) \\ H_1/G_1 \cong H_2/G_2}} \left| H_1 \right|^n \left| H_2 \right|^{n+r} \left ( \frac{\left| G_2 \right|}{\left| H_2 \right|} \right)^n \\
\leq & \, \sum_{\substack{(G_1, G_2) \neq (H_1, H_2) \\ H_1/G_1 \cong H_2/G_2}} \frac{\left| H_1 \right|^n \left| H_2 \right|^{n+r}}{2^n} \\
= & \, O_{H_1, H_2}\left ( \frac{\left| H_1 \right|^n \left| H_2 \right|^{n}}{2^n} \right ).
\end{align*}
Thus we have
$$
\lim_{n \rightarrow \infty} \sum_{\substack{(G_1, G_2) \neq (H_1, H_2) \\ H_1/G_1 \cong H_2/G_2}} \frac{\left| S_{G_1, G_2} \right|}{\left| H_1 \right|^{n+u} \left| H_2 \right|^{n+u}} = 0
$$
and
\begin{equation} \label{eq42d}
\begin{split}
& \lim_{n \rightarrow \infty} \frac{\left| S_{H_1, H_2}^b \right|}{\left| H_1 \right|^{n+u} \left| H_2 \right|^{n+u}} \\
= \, & \lim_{n \rightarrow \infty} \frac{\left| \Sur(F_n, H_1) \times \Sur(F_n, H_2) \right|}{\left| H_1 \right|^{n+u} \left| H_2 \right|^{n+u}} \\
= \, & \frac{1}{\left| H_1 \right|^{u} \left| H_2 \right|^{u}}.
\end{split}
\end{equation}
   
    \item $(G_1, G_2) \neq (H_1, H_2)$. Choose a basis $\left\{ y_1, \hdots, y_{n} \right\}$ of $F_n$ such that 
    $y_i \in \left< b_{1}, \hdots, b_{n+u} \right>$ for $1 \leq i \leq d_n$. 
    If $(\phi_1, \phi_2) \in S_{G_1, G_2}^b$, then $\phi_1$ is determined by $\phi_1(y_1), \hdots, \phi_1(y_{d_n}) \in G_1$ and $\phi_1(y_{d_n+1}), \hdots, \phi_1(y_n) \in H_1$ so 
    \begin{equation} \label{eq42e}
    \# \begin{Bmatrix}
\phi_1 \in \Sur(F_n, H_1) \mid (\phi_1, \phi_2) \in S_{G_1, G_2}^b \\
\text{for some } \phi_2 \in \Sur(F_n, H_2)
\end{Bmatrix}
    \leq \left| G_1 \right|^{d_n} \left| H_1 \right|^{n-d_n}.
    \end{equation}
    Since $S_{G_1, G_2}^b$ is a subset of $S_{G_1, G_2}$, for a fixed $\phi_1 \in \Sur(F_n, H_1)$ we have
    \begin{equation} \label{eq42f}
    \# \left\{ \phi_2 \in \Sur(F_n, H_2) \mid (\phi_1, \phi_2) \in S_{G_1, G_2}^b \right\} \leq \left| G_2 \right|^{n} \left| H_2 \right|^{r}
    \end{equation}
    by the proof of Lemma \ref{lem4b}. The equations (\ref{eq42e}) and (\ref{eq42f}) imply that
    \begin{align*}
    & \sum_{\substack{(G_1, G_2) \neq (H_1, H_2) \\ H_1/G_1 \cong H_2/G_2}} \frac{\left| S_{G_1, G_2}^b \right|}{\left| H_1 \right|^{n+u} \left| G_2 \right|^{n+u}} \\
\leq \, & \sum_{\substack{(G_1, G_2) \neq (H_1, H_2) \\ H_1/G_1 \cong H_2/G_2}} \frac{\left| G_1 \right|^{d_n} \left| H_1 \right|^{n-d_n} \left| G_2 \right|^{n} \left| H_2 \right|^{r}}{\left| H_1 \right|^{n+u} \left| G_2 \right|^{n+u}} \\
= \, & \sum_{\substack{(G_1, G_2) \neq (H_1, H_2) \\ H_1/G_1 \cong H_2/G_2}} \frac{\left| G_1 \right|^{d_n} \left| H_2 \right|^{r}}{\left| H_1 \right|^{d_n+u} \left| G_2 \right|^{u}} \\
= \, & O_{H_1, H_2} \left ( \frac{1}{\left| H_1 \right|^{d_n}} \left ( \frac{\left| H_1 \right|}{2} \right )^{d_n} \right ) \\
= \, & O_{H_1, H_2} \left ( \frac{1}{2^{d_n}} \right ).
    \end{align*}
By the assumption, we have $\lim_{n \rightarrow \infty} d_n = \infty$ so
\begin{equation} \label{eq42g}
\lim_{n \rightarrow \infty} \sum_{\substack{(G_1, G_2) \neq (H_1, H_2) \\ H_1/G_1 \cong H_2/G_2}} \frac{\left| S_{G_1, G_2}^b \right|}{\left| H_1 \right|^{n+u} \left| G_2 \right|^{n+u}} = 0.
\end{equation}
\end{enumerate}
We conclude the proof from the equations (\ref{eq42d}) and (\ref{eq42g}).
\end{proof}

By applying Theorem \ref{thm1i} to the above proposition, we can conclude that the distributions of the random groups $F_n/\left< r_i \right>$ and $F_n / \left< r_i b_{n, i} \right>$ in $\GG$ are asymptotically independent.

\begin{theorem} \label{thm4d}
Let $u$, $r_i$, $b_{n, i}$ and $d_n$ be as in Proposition \ref{prop4c}. Then the joint distributions of
$$
(F_n/\left< r_1, \hdots, r_{n+u} \right>, \, F_n/\left< r_1 b_{n, 1}, \hdots, r_{n+u} b_{n, n+u} \right>)
$$
weakly converge in distribution to the probability measure $\mu_u \times \mu_u$ on $\GG^2$ as $n \rightarrow \infty$.
\end{theorem}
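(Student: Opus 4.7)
The plan is to combine the mixed moment computation of Proposition \ref{prop4c} with the universality result of Theorem \ref{thm1i}. Specifically, I would apply Theorem \ref{thm1i} in the case $r=2$ with $\Gamma$ the trivial group, using Lemma \ref{lem22h} to identify $\GG_{\Gamma}$ with $\GG$, so that surjections in $\GG_{\Gamma}$ are ordinary surjections of profinite groups.

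Set $X_{n,1} := F_n/\langle r_1, \cdots, r_{n+u}\rangle$ and $X_{n,2} := F_n/\langle r_1 b_{n,1}, \cdots, r_{n+u} b_{n,n+u}\rangle$, and let $Y_1, Y_2$ be independent random groups in $\GG$ each distributed according to $\mu_u$ from Theorem \ref{thm1h}. To verify the hypotheses of Theorem \ref{thm1i}, first I would observe that by independence of $Y_1, Y_2$ together with Lemma \ref{lem4a},
\begin{equation*}
\EE(\#\Sur(Y_1, H_1) \cdot \#\Sur(Y_2, H_2))
= \EE(\#\Sur(Y_1, H_1)) \cdot \EE(\#\Sur(Y_2, H_2))
= \frac{1}{|H_1|^u |H_2|^u}
\end{equation*}
for any finite groups $H_1, H_2$. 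Meanwhile Proposition \ref{prop4c} provides exactly the matching asymptotic
\begin{equation*}
\lim_{n \to \infty} \EE(\#\Sur(X_{n,1}, H_1) \cdot \#\Sur(X_{n,2}, H_2)) = \frac{1}{|H_1|^u |H_2|^u}.
\end{equation*}
The common value $|H_1|^{-u}|H_2|^{-u} \leq 1$ trivially satisfies the polynomial bound $O((|H_1||H_2|)^{O(1)})$ required by hypothesis (\ref{eq1d}).

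Second, I would invoke Theorem \ref{thm1i} directly to conclude that $(X_{n,1}, X_{n,2})$ converges weakly in distribution to $(Y_1, Y_2)$, and the latter has distribution $\mu_u \times \mu_u$ by construction.

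Since the genuinely nontrivial work has already been done—Lemma \ref{lem4a} handles the marginal moments and Proposition \ref{prop4c} handles the mixed moment via the bound on $|S^b_{G_1, G_2}|$ that uses the hypothesis $\lim_{n \to \infty} d_n = \infty$—no real obstacle remains at this step: Theorem \ref{thm4d} is a formal consequence of assembling these two ingredients and feeding them into Theorem \ref{thm1i}. The only point worth double-checking is that $X_{n,1}, X_{n,2}, Y_1, Y_2$ indeed lie in $\GG_{\Gamma}$ for $\Gamma$ trivial, which follows because each of them has only finitely many open subgroups of any given finite index (hence only finitely many surjections to any finite group), so Lemma \ref{lem22h} applies.
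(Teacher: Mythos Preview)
Your proposal is correct and follows essentially the same approach as the paper: compute the mixed moments of the target $(Y_1,Y_2)$ via independence and Lemma~\ref{lem4a}, match them with the limit from Proposition~\ref{prop4c}, and then invoke Theorem~\ref{thm1i} with $\Gamma$ trivial. You are slightly more explicit than the paper in citing Lemma~\ref{lem22h} and checking membership in $\GG_{\Gamma}$, but the argument is otherwise identical.
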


\begin{proof}
Let $H_1$ and $H_2$ be finite groups. By Lemma \ref{lem4a} and Proposition \ref{prop4c}, we have
\begin{equation*}
\begin{split}
& \lim_{n \rightarrow \infty} \EE( \# \Sur(F_n/\left< r_i \right>, H_1) \# \Sur(F_n/\left< r_i b_{n, i} \right>, H_2) ) \\
= \, & \EE( \# \Sur(Y_1, H_1) \# \Sur(Y_2, H_2) ) \\
= \, & \frac{1}{\left| H_1 \right|^{u} \left| H_2 \right|^{u}}
\end{split}
\end{equation*}
where $Y_1, Y_2 \in \GG$ are independent random groups following the distribution of $X_u$. Now Theorem \ref{thm1i} (for the case $\Gamma = 1$) finishes the proof.
\end{proof}

\section*{Acknowledgments}
This work was supported by the new faculty research fund of Ajou University (S-2023-G0001-00236). We thank Hoi H. Nguyen and Roger Van Peski for helpful comments. 

{\small 
 }

\begin{thebibliography}{0}
\bibitem{CH21}
G. Cheong and Y. Huang, Cohen-Lenstra distributions via random matrices over complete discrete valuation rings with finite residue fields, Illinois J. Math. 65 (2021), 385--415. 

\bibitem{CK22}
G. Cheong and N. Kaplan, Generalizations of results of Friedman and Washington on cokernels of random $p$-adic matrices, J. Algebra 604 (2022), 636--663. 

\bibitem{CLS23}
G. Cheong, Y. Liang and M. Strand, Polynomial equations for matrices over integers modulo a prime power and the cokernel of a random matrix, Linear Algebra Appl. 677 (2023), 1--30.

\bibitem{deJ03}
M. de Jeu, Determinate multidimensional measures, the extended Carleman theorem and quasi-analytic weights, Ann. Probab. 31 (2003), no. 3, 1205--1227. 

\bibitem{DT06}
N. M. Dunfield and W. P. Thurston, Finite covers of random $3$-manifolds, Invent. Math. 166 (2006), 457--521. 

\bibitem{EVW16}
J. S. Ellenberg, A. Venkatesh and C. Westerland, Homological stability for Hurwitz spaces and the Cohen-Lenstra conjecture over function fields, Ann. of Math. (2) 183 (2016), no. 3, 729--786. 

\bibitem{FW89}
E. Friedman and L. C. Washington, On the distribution of divisor class groups of curves over a finite field, in Théorie des Nombres (Quebec, PQ, 1987), de Gruyter, Berlin, 1989, 227--239. 

\bibitem{Lee23a}
J. Lee, Joint distribution of the cokernels of random $p$-adic matrices, Forum Math. 35 (2023), no. 4, 1005--1020.

\bibitem{Lee23b}
J. Lee, Universality of the cokernels of random $p$-adic Hermitian matrices, Trans. Amer. Math. Soc. 376 (2023), no. 12, 8699--8732.

\bibitem{LW20}
Y. Liu and M. M. Wood, The free group on $n$ generators modulo $n+u$ random relations as $n$ goes to infinity, J. Reine Angew. Math. 762 (2020), 123--166. 

\bibitem{LWZ19}
Y. Liu, M. M. Wood and D. Zureick-Brown. A predicted distribution for Galois groups of maximal unramified extensions, arXiv:1907.05002. 

\bibitem{NVP23}
H. H. Nguyen and R. Van Peski, Universality for cokernels of random matrix products, arXiv:2209.14957. 

\bibitem{NW22a}
H. H. Nguyen and M. M. Wood, Random integral matrices: universality of surjectivity and the cokernel, Invent. Math. 228 (2022), 1--76. 

\bibitem{NW22b}
H. H. Nguyen and M. M. Wood, Local and global universality of random matrix cokernels, arXiv:2210.08526. 

\bibitem{Saw20}
W. Sawin, Identifying measures on non-abelian groups and modules by their moments via reduction to a local problem, arXiv:2006.04934, to appear in Amer. J. Math.

\bibitem{SW22a}
W. Sawin and M. M. Wood, Finite quotients of $3$-manifold groups, arXiv:2203.01140. 

\bibitem{SW22b}
W. Sawin and M. M. Wood, The moment problem for random objects in a category, arXiv:2210.06279. 

\bibitem{VP23}
R. Van Peski, Hall-Littlewood polynomials, boundaries, and $p$-adic random matrices, Int. Math. Res. Not. (2023), no. 13, 11217--11275.

\bibitem{WW21}
W. Wang and M. M. Wood, Moments and interpretations of the Cohen-Lenstra-Martinet heuristics, Comment. Math. Helv. 96 (2021), no. 2, 339--387. 

\bibitem{Woo17}
M. M. Wood, The distribution of sandpile groups of random graphs, J. Amer. Math. Soc. 30 (2017), no. 4, 915--958. 

\bibitem{Woo19}
M. M. Wood, Random integral matrices and the Cohen-Lenstra heuristics, Amer. J. Math. 141 (2019), no. 2, 383--398. 

\bibitem{Woo22}
M. M. Wood, Probability theory for random groups arising in number theory, arXiv:2301.09687, to appear in Proceedings of the International Congress of Mathematicians (2022).
\end{thebibliography}
\end{document}